\numberwithin{equation}{section}
\newtheorem{theorem}{Theorem}[section]
\newtheorem{definition}[theorem]{Definition}
\newtheorem{lemma}[theorem]{Lemma}
\newtheorem{proposition}[theorem]{Proposition}
\newtheorem{remark}[theorem]{Remark}
\newenvironment{proof}[1][Proof]{\noindent\textbf{#1.} }{\hfill $\square$}
\begin{document}

\title{On the nonlinear Schr\"{o}dinger--Poisson system with
positron--electron interaction}
\date{}
\author{Ching-yu Chen, Yueh-cheng Kuo and Tsung-fang Wu\thanks{%
Corresponding author. E-mail address: tfwu@nuk.edu.tw(T.-F. Wu)} \\
{\footnotesize \emph{Department of Applied Mathematics, National University
of Kaohsiung, Kaohsiung 811, Taiwan }}}
\maketitle

\begin{abstract}
We study the Schr\"{o}dinger--Poisson type system:
\begin{equation*}
\left\{
\begin{array}{ll}
-\Delta u+\lambda u+\left( \mu _{11}\phi _{u}-\mu _{12}\phi _{v}\right) u=%
\frac{1}{2\pi }\int_{0}^{2\pi }\left\vert u+e^{i\theta }v\right\vert
^{p-1}\left( u+e^{i\theta }v\right) d\theta & \text{ in }\mathbb{R}^{3}, \\
-\Delta v+\lambda v+\left( \mu _{22}\phi _{v}-\mu _{12}\phi _{u}\right) v=%
\frac{1}{2\pi }\int_{0}^{2\pi }\left\vert v+e^{i\theta }u\right\vert
^{p-1}\left( v+e^{i\theta }u\right) d\theta & \text{ in }\mathbb{R}^{3},%
\end{array}%
\right.
\end{equation*}%
where $1<p<3$ with parameters $\lambda ,\mu_{ij}>0$. Novel approaches are employed to prove the existence of
a positive solution for $1<p<3$ including, particularly, the finding of a ground state solution for $2\leq p<3$ using established linear algebra techniques and demonstrating the existence of two distinct positive solutions for $1<p<2.$  The analysis here, by employing alternative techniques, yields additional and improved results to those obtained in the study of Jin and Seok [Calc. Var. (2023) 62:72].
\vspace{2mm}

\textbf{Keywords:} variational method.

\textbf{2010 Mathematics Subject Classification:} 35J20, 35J61, 35A01, 35B40.
\end{abstract}

\section{Introduction}

In this paper, we study the Schr\"{o}dinger--Poisson type systems:%
\begin{equation}
\left\{
\begin{array}{ll}
-\Delta u+\lambda u+\left( \mu _{11}\phi _{u}-\mu _{1,2}\phi _{v}\right) u=%
\displaystyle\frac{1}{2\pi }\int_{0}^{2\pi }\left\vert u+e^{i\theta
}v\right\vert ^{p-1}\left( u+e^{i\theta }v\right) d\theta & \text{ in }%
\mathbb{R}^{3}, \\
-\Delta v+\lambda v+\left( \mu _{22}\phi _{v}-\mu _{1,2}\phi _{u}\right) v=%
\displaystyle\frac{1}{2\pi }\int_{0}^{2\pi }\left\vert v+e^{i\theta
}u\right\vert ^{p-1}\left( v+e^{i\theta }u\right) d\theta & \text{ in }%
\mathbb{R}^{3},%
\end{array}%
\right.  \tag*{$\left( E\right) $}
\end{equation}%
where $u,v:\mathbb{R}^{3}\rightarrow \mathbb{R},\,\lambda, \mu_{ij}>0$ for $%
i, j=1, 2$ and $1<p<3$ with the function $\phi _{w}\in D^{1,2}(\mathbb{R}%
^{3})$ given by
\begin{equation}
\phi _{w}(x)=\int_{\mathbb{R}^{3}}\frac{w^{2}(y)}{|x-y|}dy.  \label{1-2}
\end{equation}%
Such equation is variational and its solutions are critical points of the
corresponding energy functional $J:H\rightarrow \mathbb{R}$ defined as%
\begin{eqnarray*}
J(u,v) &=&\frac{1}{2}\int_{\mathbb{R}^{3}}|\nabla u|^{2}+\lambda u^{2}dx+%
\frac{1}{2}\int_{\mathbb{R}^{3}}|\nabla v|^{2}+\lambda v^{2}dx \\
&&+\frac{1}{4}\int_{\mathbb{R}^{3}}\mu _{11}\phi _{_{u}}u^{2}+\mu _{22}\phi
_{_{v}}v^{2}-2\mu _{12}\phi _{_{v}}u^{2}dx \\
&&-\frac{1}{2\pi \left( p+1\right) }\int_{\mathbb{R}^{3}}\int_{0}^{2\pi
}\left\vert u+e^{i\theta }v\right\vert ^{p+1}d\theta dx,
\end{eqnarray*}%
where $H:=H^{1}\left(\mathbb{R}^{3}\right)\times H^{1}\left(\mathbb{R}^{3}\right). $ Note that $(u,v)\in H$ is a solution of system $\left( E\right) $ if and
only if $\left( u,v\right) $ is a critical point of $J.$ The couple $(u,v)$
is called a ground state solution of system $(E)$, if $\left( u,v\right) $
is a solution of the system and a minimum among all nontrivial solutions.

The system $(E)$ stems from the study of the nonlinear Maxwell-Klein-Gordon
equation in the limit of infinite light speed where the decomposition of the
wave functions results in the following system
\begin{equation*}
\left\{
\begin{array}{ll}
2i\,\dot{v}_{+}-\Delta v_{+}+\left( \mu _{11}\phi _{v_{+}}-\mu _{12}\phi
_{v_{-}}\right) v_{+}=\displaystyle\frac{1}{2\pi }\displaystyle%
\int_{0}^{2\pi }g(v_{+}+e^{i\theta }\bar{v}_{-})d\theta , &  \\
2i\,\dot{v}_{-}-\Delta v_{-}+\left( \mu _{22}\phi _{v_{-}}-\mu _{12}\phi
_{v_{+}}\right) v_{-}=\displaystyle\frac{1}{2\pi }\displaystyle%
\int_{0}^{2\pi }g(v_{-}+e^{i\theta }\bar{v}_{+})d\theta , &
\end{array}%
\right.
\end{equation*}%
with $v_{+}$ and $v_{-}$ being the decomposed positron and electron part of
the wave solutions respectively and $g$ the potential. For more detailed
description on the physical background and the derivation of the system, we
refer the interested readers to the paper by Jin and Seok\cite{Jin} and the
references therein. Further assumptions of separable forms of solutions for $%
v_{+}$ and $v_{-}$, namely,
\begin{equation*}
v_{+}=u(x)e^{i\frac{\lambda }{2}t}\quad \mathrm{and}\quad v_{-}=v(x)e^{i%
\frac{\lambda }{2}t},
\end{equation*}%
give rise to system $(E)$ where a standard power function of $g(u)=|u|^{p-1}u$
is assumed. The system has been carefully studied by Jin and Seok in \cite%
{Jin} and since the focus of our study is to extend and improve on their
analysis, for the paper the be self-contained, a brief account of their
results will be given below; but first we define some concepts of triviality
and positiveness of a vector function $\left( u,v\right) .$

\begin{definition}
A vector function $\left( u,v\right) $ is said to be\newline
$\left( i\right) $ nontrivial if either $u\neq 0$ or $v\neq 0;$\newline
$\left( ii\right) $ semi-trivial if it is nontrivial but either $u=0$ or $%
v=0;$\newline
$\left( iii\right) $ vectorial if both of $u$ and $v$ are not zero;\newline
$\left( iv\right) $ nonnegative if $u\geq 0$ and $v\geq 0;$\newline
$\left( v\right) $ positive if $u>0$ and $v>0.$
\end{definition}

Jin and Seok in \cite{Jin} considered two cases of system $(E)$, namely,
when the potential function $g(u)$ is set to zero and when $g(u)=|u|^{p-1}u$, and put
their results of the coupled system in comparison respectively with those of
a Hartree equation,
\begin{equation}
-\Delta u +\lambda u+ \mu \phi _{u} u = 0,  \label{eq:sp0}
\end{equation}
when $g=0$ and with those of a single nonlinear Schr\"{o}dinger--Poisson
equation , i.e.
\begin{equation}
-\Delta u +\lambda u+ \mu \phi _{u} u = |u|^{p-1} u,  \label{eq:sp1}
\end{equation}
when $g(u)= |u|^{p-1} u$ is the given power function. These equations appear in the study of semiconductor theory and have
been investigated by many; see for example \cite{BF,Lions,A,AP,CW,R1,R2,SWF,SWF1,SWF2,Wu,ZZ}. The nonlinear term $|u|^{p-1}u$ (or a more general form of $g(u)$) has been used conventionally in the Schr\"{o}dinger-Poisson equation to model the interaction among particles (possibly nonradial). It is known that equation (\ref{eq:sp0}) admits a unique radial solution when $\mu<0$ and, if $\mu\ge 0,$ only the
trivial solution is permitted. Jin and Seok\cite{Jin} likened the role of $%
\mu$ of equation (\ref{eq:sp0}) to that of det$(\mu_{ij})$ (i.e. $\,\mu_{11}\mu_{22}-\mu_{12}^2$) in system $(E)$
when the RHS vanishes, as they demonstrated that system ($E$) similarly
admits only the trivial solution when det$(\mu_{ij})\ge 0$ and a unique
positive vector solution exists if det$(\mu_{ij})< 0$.

With the standard power functions assumed on the RHS of equation (\ref%
{eq:sp1}), the solution structure varies within different range of $p$. In
the sub-linear and super-critical range of $p\in (0,1)\cup [5,\infty)$,
there is no nontrivial solution for equation (\ref{eq:sp1}); similarly, Jin
and Seok\cite{Jin} proved that only the trivial solution is permitted for
system ($E$) but unlike in equation (\ref{eq:sp1}), additional conditions on
$\mu_{ij}$, i.e. det($\mu_{ij})\ge 0$ is imposed.

In the range of $p\in (1,5)$, as with the single Schr\"{o}dinger--Poisson
equation case, the solution structure of system ($E$) changes at around $p=2$. %
Jin and Seok \cite{Jin} proved their results by considering the Mose index of the critical point of the energy functional and the existence of solutions were obtained subject to various additional
conditions, for clarity, we itemise their results below.

\begin{itemize}
\item[$\left( i\right) $] For $1<p\leq 2$ and $\lambda \geq 2,\mu _{ij}>0$
for $i,j=1,2,\, $ system ($E$) permits only the trivial solution when $\mu
_{11}>4$ and $\left( \mu _{11}-4\right) \left( \mu _{22}-4\right) >\mu
_{12}^{2}$.

\item[$\left( ii\right) $] For $1<p\leq 2$ and $\lambda ,\mu _{11},\mu
_{22}>0$ fixed. A positive solution exists for system ($E$) provided $\mu
_{12}>\mu _{0}$ for some constant $\mu _{0}>0$ (i.e. $\mu _{11}\mu _{22}-\mu
_{12}^{2}<0$).

\item[$\left( iii\right) $] For $1<p<2$ and $\lambda ,\mu _{ij}>0$ for $%
i,j=1,2$. At least two positive radial solutions are permitted for system ($E
$) when $\mu _{11}\mu _{22}-\mu _{12}^{2}>0$ and $\mu _{ij}$ is sufficiently
small, where $i,j=1,2.$

\item[$\left( iv\right) $] For $\frac{\sqrt{73}-2}{3}\leq p<5$ and parameters
$\lambda ,\mu _{ij}>0$ for $i,j=1,2$, system ($E$) has a positive ground
state solution. If $\mu _{11}=\mu _{22}$ is imposed, then
the range for the existence of a positive ground state solution is extended to $2<p<5$.
\end{itemize}


\noindent Result $(i)$ indicates that, for $1<p\leq 2$, a positive solution must lie within the L-shaped region of $0<\mu_{11}<4$ or $0<\mu_{22}<4$ and from result $(ii)$ the existence of a positive solution is only permitted if  $\mu_{12}$ is sufficiently large, namely, $\mu _{11}\mu _{22}-\mu_{12}^{2}<0.$
For the existence of a positive ground state solution in $(iv)$, the result falls short of $2<p<5$ unless a much stronger constraint of $\mu _{11}=\mu _{22}$ is imposed. Jin and Seok \cite{Jin}, however, suggested that the shorter range of $\frac{\sqrt{73}-2}{3}\leq p<5$ instead of $2<p<5$ and the constraint of $\mu _{11}=\mu _{22}$ are likely to be technical issues. Questions naturally arise as to whether we can improve on these results by demonstrating the existence of positive (ground state) solutions under weaker conditions of the parameter values of $\lambda$ and $\mu_{ij}$'s.

We begin by looking for solutions in the L-shaped region of $\mu_{11}$ and $\mu_{22}$ without the intension of imposing any constraint on $\mu_{12}$. As a result, we are able to identify a threshold number $\Lambda_0$ for the parameter values of $\mu_{11}$ and $\mu_{22}$ such that a positive solution is always permitted provided $\min\{\mu_{11},\mu_{22}\}<\Lambda_0$, as stated in Theorem \ref{tm1.2} below. In Theorem \ref{tm1.3},  the existence of a positive ground state solution is extended to $2\le p<\frac{\sqrt{73}-2}{3}$
including $p=2$ where a lesser constraint on the parameter values is required and, similarly, in Theorem \ref{tm1.4}, no additional condition on $\mu_{ij}$ is imposed for the existence of at least two positive solutions.

In order to achieve these results, new ideas and techniques have been explored.To begin, we define
\begin{equation*}
\Lambda _{0}:=\frac{3\sqrt{3}\left( p-1\right) \pi \lambda ^{\frac{3}{2}}}{%
32(3-p)A\left( p\right) }\left( \frac{3-p}{2S_{p+1}^{p+1}}\right)
^{2/(p-1)}>0\text{ for }1<p<3,
\end{equation*}%
where
\begin{equation*}
A\left( p\right) =\left\{
\begin{array}{ll}
\left( \frac{3-p}{2}\right) ^{1/\left( p-1\right) }, & \text{ if }1<p\leq 2,
\\
\frac{1}{2}, & \text{ if }2<p<3.%
\end{array}%
\right.
\end{equation*}%
For simplicity, we have assumed $\mu _{11}\leq \mu _{22}$ to facilitate further conditions being imposed on the smaller of the two below. However, the role of $\mu_{11}$ and $\mu_{22}$ can be interchanged while the results remain unchanged.
Then we have the following theorems.

\begin{theorem}
\label{t1}Suppose that $1<p<3$ and $\lambda ,\mu_{ij}>0$. If $0<\mu
_{11}<\Lambda _{0},$ then System $(E)$ has a positive solution $\left(
u_{0},v_{0}\right) $ with positive energy and%
\begin{equation*}
\left\Vert \left( u_{0},v_{0}\right) \right\Vert _{H}\rightarrow 0\text{ as }%
\mu _{22}\rightarrow \infty .
\end{equation*}%
\label{tm1.2}
\end{theorem}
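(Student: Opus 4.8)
The plan is to realise $(u_{0},v_{0})$ as a mountain--pass critical point of $J$ and then to upgrade it to a positive, vectorial solution. For brevity write $\|u\|^{2}:=\int_{\mathbb{R}^{3}}(|\nabla u|^{2}+\lambda u^{2})\,dx$ and $B(u,v):=\int_{\mathbb{R}^{3}}(\mu_{11}\phi_{u}u^{2}+\mu_{22}\phi_{v}v^{2}-2\mu_{12}\phi_{v}u^{2})\,dx$, so that $\|(u,v)\|_{H}^{2}=\|u\|^{2}+\|v\|^{2}$ and $J(u,v)=\tfrac12\|(u,v)\|_{H}^{2}+\tfrac14B(u,v)-\tfrac{1}{2\pi(p+1)}\int_{\mathbb{R}^{3}}\int_{0}^{2\pi}|u+e^{i\theta}v|^{p+1}\,d\theta\,dx$. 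Since $2<p+1<4$, $J$ has near the origin the usual Schr\"{o}dinger--Poisson ``positive barrier'': the Hardy--Littlewood--Sobolev inequality (for the Coulomb terms) together with $H^{1}(\mathbb{R}^{3})\hookrightarrow L^{q}(\mathbb{R}^{3})$, $2\le q\le6$, gives
\[
J(u,v)\ \ge\ \tfrac12\|(u,v)\|_{H}^{2}-C_{1}\|(u,v)\|_{H}^{p+1}-C_{2}\mu_{12}\|(u,v)\|_{H}^{4},
\]
with $C_{1}=C_{1}(p,\lambda)$, $C_{2}=C_{2}(\lambda)$ independent of $\mu_{11},\mu_{22}$; as $p+1>2$ this yields $\rho,\alpha>0$ with $J\ge\alpha$ on $\{\|(u,v)\|_{H}=\rho\}$.

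The other end of the path is produced inside the decoupled slice $\{v=0\}$, on which the $\mu_{12}$--cross term vanishes and $J(tu,0)=\tfrac{t^{2}}{2}\|u\|^{2}+\tfrac{\mu_{11}}{4}t^{4}\int_{\mathbb{R}^{3}}\phi_{u}u^{2}\,dx-\tfrac{t^{p+1}}{p+1}\|u\|_{p+1}^{p+1}$. Although this $\to+\infty$ (because $p+1<4$), a one--variable analysis of its derivative shows it becomes strictly negative at some $t$ precisely when $\mu_{11}\int\phi_{u}u^{2}$ is small compared with $\|u\|_{p+1}^{p+1}$ and $\|u\|$; taking for $u$ an optimiser of $\|u\|_{p+1}\le S_{p+1}\|u\|$ and estimating $\int\phi_{u}u^{2}$ through the sharp $\mathbb{R}^{3}$ Hardy--Littlewood--Sobolev/Gagliardo--Nirenberg inequalities (whose worst admissible concentration scale is different in the ranges $1<p\le2$ and $2<p<3$, hence the factor $A(p)$) converts this smallness into exactly the condition $\mu_{11}<\Lambda_{0}$, and one checks that the resulting endpoint $(Tu,0)$ lies outside $B_{\rho}$. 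Thus the mountain--pass geometry holds.

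The main obstacle is compactness: the Coulomb term is ``subcritical'' ($p+1<4$) and sign--indefinite, so a $(PS)_{c}$ sequence $w_{n}$ must first be shown bounded --- which I would do by combining the Nehari identity $\langle J'(w_{n}),w_{n}\rangle=o(\|w_{n}\|_{H})$ with a Pohozaev identity for $(E)$, disentangling the $2$--, $4$-- and $(p+1)$--homogeneous pieces --- and then strongly convergent, either after passing to the rotation--invariant subspace $H^{1}_{r}(\mathbb{R}^{3})\times H^{1}_{r}(\mathbb{R}^{3})$, where the embeddings into $L^{p+1}$ and the Coulomb bilinear form are compact (and invoking the principle of symmetric criticality), or directly on $H$ using the nonvanishing built into the Coulomb interaction. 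The mountain--pass theorem then produces a critical point $(u_{0},v_{0})$ at a level $c\ge\alpha>0$, so in particular $J(u_{0},v_{0})>0$.

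It remains to make $(u_{0},v_{0})$ positive and vectorial. The sign-- and $\theta$--translation invariance of $|u+e^{i\theta}v|$ gives $J(|u|,|v|)=J(u,v)$, so one may take $(u_{0},v_{0})$ nonnegative; Chebyshev's integral inequality shows $\tfrac1{2\pi}\int_{0}^{2\pi}|u+e^{i\theta}v|^{p-1}(u+e^{i\theta}v)\,d\theta\ge0$ for $u,v\ge0$, so each equation of $(E)$ has a nonnegative right--hand side and the strong maximum principle forces $u_{0}>0$, $v_{0}>0$ once neither component vanishes identically. To exclude the two semi--trivial possibilities one compares $c$ with the mountain--pass levels $m_{1},m_{2}$ of $J$ on $\{v=0\}$ and on $\{u=0\}$: a direct computation shows the second variation of $J$ at a semi--trivial critical point, tested on its own profile, equals $\tfrac{1-p}{2}\|\cdot\|^{2}-(\text{nonnegative})<0$, so that point is a saddle of $J$, and deforming the minimax class along its unstable direction gives $c<\min\{m_{1},m_{2}\}$; hence $(u_{0},v_{0})$ is vectorial, and therefore positive. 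Finally, for the behaviour as $\mu_{22}\to\infty$ I would feed the identity $J(u_{0},v_{0})=\tfrac{p-1}{2(p+1)}\|(u_{0},v_{0})\|_{H}^{2}-\tfrac{3-p}{4(p+1)}B(u_{0},v_{0})$, together with an a priori control of $c$ that does not deteriorate as $\mu_{22}\to\infty$, back into the two equations and track the $\mu_{22}$--weighted Coulomb term $\mu_{22}\int\phi_{v_{0}}v_{0}^{2}$, which heavily penalises the $v$--component; this last, quantitative, step carries the delicate bookkeeping.
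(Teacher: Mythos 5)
Your route (a mountain--pass argument in $H$, with the far endpoint taken in the slice $\{v=0\}$) is genuinely different from the paper's, which instead minimizes $J$ over the set $\mathbf{M}^{(1)}$, the piece of the Nehari set below the level $D_{0}$ whose $H$--norm is smaller than $\left(\frac{3\sqrt{3}(p-1)\pi\lambda^{3/2}}{16\mu_{22}(3-p)}\right)^{1/2}$ and which is shown to sit inside $\mathbf{M}^{-}$ (Lemma \ref{g7}), and then passes to the limit by translations, Lions' vanishing lemma and a Brezis--Lieb energy comparison. Measured against that proof, your proposal has two gaps that are not merely technical. The first is vectoriality. Your observation that a semitrivial critical point $(w,0)$ is a saddle (the $\theta$--average of the coupling indeed gives a quadratic form which, tested on $(0,w)$ and combined with the scalar Nehari identity, is $\frac{1-p}{4}\|w\|_{H^{1}}^{2}$ minus a nonnegative term) is correct, but it does not yield the strict inequality $c<\min\{m_{1},m_{2}\}$: with your endpoint inside $\{v=0\}$ you only get $c\le m_{1}$ a priori, and ``deforming the minimax class along the unstable direction'' requires compactness and isolatedness information about the whole critical set at the semitrivial level that you do not have. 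What is needed, and what the paper supplies, is a quantitative two--component competitor: the rays $\left(t\sqrt{s_{\min}}\,w_{\mu_{11}},\,t\sqrt{1-s_{\min}}\,w_{\mu_{11}}\right)$ with $s_{\min}$ from Lemma \ref{L2-1}, on which Jensen's inequality $\frac{1}{2\pi}\int_{0}^{2\pi}\left(1+2\sqrt{s_{\min}(1-s_{\min})}\cos\theta\right)^{(p+1)/2}d\theta>1$ forces $\alpha^{-}<\beta_{\mu_{11}}$ (Lemma \ref{g4}); combined with Lemma \ref{l5} this is exactly what excludes semitrivial limits. No analogue of this computation appears in your sketch, so the positivity/vectoriality of your critical point does not close.

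The second gap is the asymptotics $\left\Vert(u_{0},v_{0})\right\Vert_{H}\to0$ as $\mu_{22}\to\infty$, which you explicitly defer (``carries the delicate bookkeeping''). In the paper this is not an afterthought but a consequence of the construction: the solution is obtained inside $\mathbf{M}^{(1)}$, whose defining norm bound decays like $\mu_{22}^{-1/2}$, and the bound survives the strong limit. Your construction cannot produce it as described: your mountain--pass barrier $\alpha>0$ is uniform in $\mu_{22}$ (your constants $C_{1},C_{2}$ involve only $p,\lambda,\mu_{12}$), and since every term of $J$ is controlled by powers of $\|(u,v)\|_{H}$, a critical point at level $c\ge\alpha$ cannot have small norm; so the decay would have to come from a completely different, $\mu_{22}$--dependent choice of minimax level, which you have not set up. A lesser but real issue is compactness: Palais--Smale sequences do not satisfy a Pohozaev identity (even approximately) just because $J'(w_{n})\to0$, so ``Nehari plus Pohozaev'' does not give boundedness without a Jeanjean--type device; and restricting to $H_{r}$ changes the minimax level and interferes with the comparison against the scalar levels $\beta_{\mu_{11}}\le\beta_{\mu_{22}}$. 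The paper avoids both problems because $J$ is coercive on $\mathbf{M}^{-}$ (Lemma \ref{g5}) and minimizing sequences in $\mathbf{M}^{(1)}$ are bounded by definition, with nonvanishing guaranteed by $\alpha^{-}>0$.
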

In the proof of Theorem \ref{tm1.2}, we will find critical points by introducing a novel constraint, applying the fibering method while adoptting new analytical techniques.

The next theorem describes our results on the existence of a ground state solution.
\begin{theorem}
\label{t2}Suppose that $2\leq p<3$ and $\lambda ,\mu_{ij}>0$.
Let $\left( u_{0},v_{0}\right) $ be positive solution of System $%
(E)$ as in Theorem \ref{t1}. Then we have\newline
$\left( i\right) $ if $2<p<3,0<\mu _{ii}<\Lambda _{0}$
and $\mu _{11}\mu _{22}-\mu _{12}^{2}\geq 0,$ then $\left(
u_{0},v_{0}\right) $ is a positive ground state solution of System $(E)$;%
\newline
$\left( ii\right) $ if $p=2$ and $0<\mu _{ii}<\Lambda _{0}$ then $\left(
u_{0},v_{0}\right) $ is a positive ground state solution of System $(E).$ %
\label{tm1.3}
\end{theorem}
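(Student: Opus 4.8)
The plan is to prove that the solution $(u_0,v_0)$ of Theorem \ref{t1} has the least energy among all nontrivial solutions of $(E)$. Write $\|(u,v)\|_H^2=\|\nabla u\|_2^2+\|\nabla v\|_2^2+\lambda(\|u\|_2^2+\|v\|_2^2)$, $\mathcal{B}(u,v)=\int_{\mathbb{R}^3}(\mu_{11}\phi_u u^2+\mu_{22}\phi_v v^2-2\mu_{12}\phi_v u^2)\,dx$ and $N_p(u,v)=\frac{1}{2\pi}\int_{\mathbb{R}^3}\int_0^{2\pi}|u+e^{i\theta}v|^{p+1}\,d\theta\,dx$. For every nontrivial solution, testing the equations of $(E)$ against $(u,v)$ and adding gives the Nehari identity $\|(u,v)\|_H^2+\mathcal{B}(u,v)=N_p(u,v)$, hence
\[
J(u,v)=\frac{p-1}{2(p+1)}\|(u,v)\|_H^2+\frac{p-3}{4(p+1)}\mathcal{B}(u,v),
\]
while the Pohozaev identity (from $\tfrac{d}{d\rho}J(u(\cdot/\rho),v(\cdot/\rho))|_{\rho=1}=0$) reads $\tfrac12(\|\nabla u\|_2^2+\|\nabla v\|_2^2)+\tfrac{3\lambda}{2}(\|u\|_2^2+\|v\|_2^2)+\tfrac54\mathcal{B}(u,v)=\tfrac{3}{p+1}N_p(u,v)$. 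Eliminating $N_p$ between the two identities gives, for every nontrivial solution,
\[
(5p-7)\,\mathcal{B}(u,v)=2(5-p)\bigl(\|\nabla u\|_2^2+\|\nabla v\|_2^2\bigr)-6(p-1)\lambda\bigl(\|u\|_2^2+\|v\|_2^2\bigr),
\]
and since $p\ge2$ forces $5p-7>0$, this yields in particular $\mathcal{B}(u,v)\le\frac{2(5-p)}{5p-7}\|(u,v)\|_H^2$.

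Recall from the proof of Theorem \ref{t1} that $(u_0,v_0)$ is obtained by the fibering method as a minimiser of $J$ over the branch $\mathcal{A}$ of the Nehari set on which the fibre $t\mapsto J(tu,tv)$ has $t=1$ as a strict local maximum, with $c_0:=J(u_0,v_0)=\inf_{\mathcal{A}}J>0$. Because $t\mapsto\|(u,v)\|_H^2+t^2\mathcal{B}(u,v)-t^{p-1}N_p(u,v)$ has at most one interior critical point, for a solution $(u,v)\in\mathcal{A}$ is equivalent to $\psi''_{u,v}(1)=(1-p)\|(u,v)\|_H^2+(3-p)\mathcal{B}(u,v)<0$, i.e.
\[
\mathcal{B}(u,v)<\frac{p-1}{3-p}\|(u,v)\|_H^2 .
\]
It therefore suffices to show that every nontrivial solution of $(E)$ obeys this inequality: then $(u,v)\in\mathcal{A}$, so $J(u,v)\ge c_0=J(u_0,v_0)$, and since $(u_0,v_0)$ is positive (hence nontrivial) it is a positive ground state, proving $(i)$ and $(ii)$. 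Comparing the bound $\mathcal{B}(u,v)\le\frac{2(5-p)}{5p-7}\|(u,v)\|_H^2$ with the threshold $\frac{p-1}{3-p}$, an elementary computation shows $\frac{2(5-p)}{5p-7}\le\frac{p-1}{3-p}$ exactly when $3p^2+4p-23\ge0$, i.e. $p\ge\frac{\sqrt{73}-2}{3}$. Thus for $\frac{\sqrt{73}-2}{3}\le p<3$ the inclusion $(u,v)\in\mathcal{A}$ follows from the Nehari and Pohozaev identities alone, needing neither the determinant hypothesis nor $\Lambda_0$; this reproduces the corresponding range of Jin and Seok.

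The genuinely new work is the range $2\le p<\frac{\sqrt{73}-2}{3}$ (including $p=2$), where $3p^2+4p-23<0$ and the crude estimate fails. Here one must retain the negative mass term in the displayed Nehari--Pohozaev relation, which amounts to establishing, for every nontrivial solution, a lower bound $\lambda(\|u\|_2^2+\|v\|_2^2)>c_p\|(u,v)\|_H^2$ with $c_p:=\frac{23-3p^2-4p}{4(p+1)(3-p)}>0$. This is where $0<\mu_{ii}<\Lambda_0$ is used: via the Gagliardo--Nirenberg inequality $N_p(u,v)\le C\bigl(\|\nabla u\|_2^{3(p-1)/2}\|u\|_2^{(5-p)/2}+\|\nabla v\|_2^{3(p-1)/2}\|v\|_2^{(5-p)/2}\bigr)$, together with $\mathcal{B}(u,v)\ge0$ — which follows from $\mu_{11}\mu_{22}-\mu_{12}^2\ge0$ and the Cauchy--Schwarz inequality for the positive-definite Riesz-potential form $(w,z)\mapsto\int\phi_w z^2\,dx$ — and the Nehari identity, one bounds the Dirichlet part of a solution's norm relative to its mass; the constant $\Lambda_0$, with its precise dependence on $\lambda$, $p$ and $S_{p+1}$, is calibrated to be exactly the threshold making this estimate close. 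For $p=2$ the fibre equation $\|(u,v)\|_H^2+t^2\mathcal{B}(u,v)-t\,N_2(u,v)=0$ is quadratic, and one simply evaluates $\psi''_{u,v}(1)=\mathcal{B}(u,v)-\|(u,v)\|_H^2$ at its root $t=1$ and discards the spurious root, so no sign condition on $\mathcal{B}$ is needed. Finally, semi-trivial solutions $(u,0)$ (resp.\ $(0,v)$) reduce to the scalar equation $-\Delta u+\lambda u+\mu_{11}\phi_u u=|u|^{p-1}u$, and the identical Nehari--Pohozaev computation gives $\mathcal{B}(u,0)=\mu_{11}\int\phi_u u^2\le\frac{2(5-p)}{5p-7}\|(u,0)\|_H^2$, so they too lie in $\mathcal{A}$.

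The main obstacle is precisely the range $2\le p<\frac{\sqrt{73}-2}{3}$: the Nehari--Pohozaev identity alone does not force a solution into $\mathcal{A}$, and one must extract from $0<\mu_{ii}<\Lambda_0$ — through the Gagliardo--Nirenberg/Nehari estimate and $\mathcal{B}\ge0$ — a quantitative lower bound on $\lambda(\|u\|_2^2+\|v\|_2^2)/\|(u,v)\|_H^2$ valid for \emph{every} nontrivial solution, not only for the particular one built in Theorem \ref{t1}. I expect the delicate point to be verifying that $\Lambda_0$ is the sharp threshold for which this closure works, and tracking the constants so that the inequality $\mathcal{B}(u,v)<\frac{p-1}{3-p}\|(u,v)\|_H^2$ comes out strict.
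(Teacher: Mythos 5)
Your overall strategy is the same as the paper's: combine the Nehari and Pohozaev identities to show that every nontrivial solution of sufficiently low energy satisfies $-(p-1)\left\Vert (u,v)\right\Vert _{H}^{2}+(3-p)\mathcal{B}(u,v)<0$, i.e.\ lies in $\mathbf{M}^{-}$, so that its energy is at least $\alpha ^{-}=J(u_{0},v_{0})$. Your elimination of $N_{p}$ and the resulting crude bound $\mathcal{B}(u,v)\leq \frac{2(5-p)}{5p-7}\left\Vert (u,v)\right\Vert _{H}^{2}$ do settle the range $\frac{\sqrt{73}-2}{3}\leq p<3$, exactly as the paper observes when comparing its conditions $(\ref{4-14})$ and $(\ref{4-15})$; but that range is already covered by Jin and Seok. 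For the range $2\leq p<\frac{\sqrt{73}-2}{3}$ (including $p=2$), which is the entire novelty of Theorem \ref{t2}, your text only announces the needed estimate (a lower bound $\lambda (\left\Vert u\right\Vert _{2}^{2}+\left\Vert v\right\Vert _{2}^{2})>c_{p}\left\Vert (u,v)\right\Vert _{H}^{2}$, with $\Lambda _{0}$ ``calibrated to be exactly the threshold'') and you yourself flag it as unverified. That estimate is precisely the content of Proposition \ref{t6}, so the proposal has a genuine gap at its core: the hard case is a plan, not a proof.

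Moreover, as formulated, the missing step is both stronger than what is needed and missing an essential ingredient. You ask for the bound for \emph{every} nontrivial solution; the paper proves (and only needs) it for solutions in $\mathbb{A}$, i.e.\ with energy $\theta =J(u,v)<D_{0}$, since solutions with $J\geq D_{0}>\alpha ^{-}$ cannot undercut the candidate level anyway. This energy cap is not cosmetic: in the paper's argument the Nehari identity, the Pohozaev identity and the energy relation are solved as the linear system $(\ref{4-10})$ with parameters $s,t,w>0$, the Hardy--Littlewood--Sobolev/Gagliardo--Nirenberg constraint $z_{3}^{2}+z_{4}^{2}\leq \frac{256}{27\pi ^{2}\lambda ^{3}}z_{2}^{3}z_{1}$ produces a quadratic inequality in $l=s+t$, and its coefficients are controlled only because $\theta <D_{0}$ \emph{and} $\mu _{ii}<\overline{\Lambda }_{0}$ (note it is $\overline{\Lambda }_{0}>\Lambda _{0}$ that enters here; $\Lambda _{0}$ is only needed so that Theorem \ref{t1} supplies $(u_{0},v_{0})$ with $\alpha ^{-}<D_{0}$). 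Only then does one obtain $s+t<\frac{4(p+1)\theta }{3-p}$, hence $w<\frac{8\theta }{(p-1)(3-p)}$, which is exactly $h_{(u,v)}^{\prime \prime }(1)<0$. Your sketch uses neither the energy restriction nor any substitute for it, so the closure you hope for cannot be expected to hold in the generality you state. Finally, your $p=2$ remark about ``discarding the spurious root'' of the quadratic fibre equation does not show $\mathcal{B}(u,v)<\left\Vert (u,v)\right\Vert _{H}^{2}$ for an arbitrary solution: $p=2$ lies inside the hard range and requires the same quantitative argument; the determinant hypothesis is dispensable at $p=2$ only because in the reduced identity $\theta =\frac{p-1}{5-p}z_{2}+\frac{p-2}{5-p}\mathcal{B}$ the coefficient of $\mathcal{B}$ vanishes, making $\theta >0$ automatic, not because the membership in $\mathbf{M}^{-}$ becomes free.
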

Note that in setting out the argument for the proof Theorem \ref{tm1.3}, the integral equations (of Nehari and Pohozaev identities) and the required conditions are conveniently written as a linear system of equations with non-linear constraints. This formulation allows us to apply straightforward linear algebra techniques for the otherwise complicated analysis.

The next two theorems cover the case $\mu _{11}\mu _{22}-\mu _{12}^{2}>0$.
We will see that, unlike the case $\mu _{11}\mu _{22}-\mu _{12}^{2}<0$, the
System $(E)$ does not admit any nontrivial solution when $\det \left( \mu
_{ij}\right) =\mu _{11}\mu _{22}-\mu _{12}^{2}$ satisfies suitable
conditions.

\begin{theorem}
\label{T6}Suppose that $1<p\leq 2$ and $\lambda ,\mu _{ij}>0.$ If
\begin{equation*}
\frac{\mu _{11}\mu _{22}-\mu _{12}^{2}}{\mu _{11}+\mu _{22}}>\left\{
\begin{array}{ll}
\frac{\left( p-1\right) ^{2}}{4}\left[ \frac{2^{p}\left( 2-p\right) ^{2-p}}{%
\lambda ^{2-p}}\right] ^{2/\left( p-1\right) }, & \text{ if }1<p<2, \\
4, & \text{ if }p=2,%
\end{array}%
\right.
\end{equation*}%
then System $(E)$ has only trivial solution.
\end{theorem}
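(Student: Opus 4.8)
The plan is to argue by contradiction. Assume $(u,v)$ is a nontrivial solution of $(E)$ and abbreviate $N=\int_{\mathbb R^{3}}|\nabla u|^{2}+|\nabla v|^{2}\,dx$, $M=\lambda\int_{\mathbb R^{3}}u^{2}+v^{2}\,dx$, $X=\int_{\mathbb R^{3}}\phi_{u}u^{2}\,dx$, $Y=\int_{\mathbb R^{3}}\phi_{v}v^{2}\,dx$, $B=\mu_{11}X+\mu_{22}Y-2\mu_{12}\int_{\mathbb R^{3}}\phi_{v}u^{2}\,dx$ and $K=\frac{1}{2\pi}\int_{\mathbb R^{3}}\int_{0}^{2\pi}|u+e^{i\theta}v|^{p+1}\,d\theta\,dx$. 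Pairing $(E)$ with $(u,v)$, and using that $(u,v)\mapsto\frac{1}{2\pi}\int_{0}^{2\pi}|u+e^{i\theta}v|^{p+1}\,d\theta$ is positively homogeneous of degree $p+1$ (so Euler's identity applies), gives the Nehari identity $N+M+B=K$. Pairing $(E)$ with $(x\cdot\nabla u,\,x\cdot\nabla v)$ -- equivalently, using $\frac{d}{dt}J(u(\cdot/t),v(\cdot/t))\big|_{t=1}=0$ -- gives the Pohozaev identity $\tfrac12 N+\tfrac32 M+\tfrac54 B=\tfrac{3}{p+1}K$. Eliminating $K$ from the two identities leaves
\[
N=\frac{3(p-1)}{2(p+1)}\,K-\frac14 B,\qquad M=\frac{5-p}{2(p+1)}\,K-\frac34 B .
\]

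The step that produces $\frac{\mu_{11}\mu_{22}-\mu_{12}^{2}}{\mu_{11}+\mu_{22}}$ is a sharp lower bound on $B$. The Coulomb bilinear form $\langle f,g\rangle:=\iint_{\mathbb R^{3}\times\mathbb R^{3}}\frac{f(x)g(y)}{|x-y|}\,dx\,dy$ is symmetric and positive definite, so by Cauchy--Schwarz $\int_{\mathbb R^{3}}\phi_{v}u^{2}\,dx=\langle u^{2},v^{2}\rangle\le X^{1/2}Y^{1/2}$, whence $B\ge\mu_{11}X-2\mu_{12}X^{1/2}Y^{1/2}+\mu_{22}Y$. Setting $\kappa:=\frac{\mu_{11}\mu_{22}-\mu_{12}^{2}}{\mu_{11}+\mu_{22}}$, the quadratic form $(s,t)\mapsto(\mu_{11}-\kappa)s^{2}-2\mu_{12}st+(\mu_{22}-\kappa)t^{2}$ has nonnegative leading coefficient and discriminant $4\mu_{12}^{2}-4(\mu_{11}-\kappa)(\mu_{22}-\kappa)=-4(\mu_{11}\mu_{22}-\mu_{12}^{2})^{2}/(\mu_{11}+\mu_{22})^{2}\le0$, hence it is nonnegative; evaluating it at $(s,t)=(X^{1/2},Y^{1/2})$ gives $B\ge\kappa(X+Y)$, and in particular $B\ge0$ (the hypothesis forces $\mu_{11}\mu_{22}-\mu_{12}^{2}>0$, its right-hand side being positive).

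Next I would bound $K$ from above. Since $|u+e^{i\theta}v|^{p+1}\le(|u|+|v|)^{p+1}$ pointwise in $\theta$, one has $K\le\|W\|_{p+1}^{p+1}$ with $W:=|u|+|v|$. For $p=2$ the identity $\|W\|_{3}^{3}=\frac{1}{4\pi}\int_{\mathbb R^{3}}\nabla W\cdot\nabla\phi_{W}\,dx$ (from $-\Delta\phi_{W}=4\pi W^{2}$), together with Cauchy--Schwarz, $\|\nabla\phi_{W}\|_{2}^{2}=4\pi\int_{\mathbb R^{3}}\phi_{W}W^{2}\,dx$, $\|\nabla W\|_{2}^{2}\le 2N$ and $\int_{\mathbb R^{3}}\phi_{W}W^{2}\,dx\le 4\langle u^{2}+v^{2},u^{2}+v^{2}\rangle\le 8(X+Y)$, yields a bound of the form $K\le c\,N^{1/2}(X+Y)^{1/2}$ with an explicit numerical constant $c$. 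For $1<p<2$ the cubic identity is unavailable, and instead I would invoke a Gagliardo--Nirenberg--type interpolation estimating $\|W\|_{p+1}^{p+1}$ by a product of powers of $N$, $M$ and $X+Y$; the constant of that estimate carries the $\lambda$-dependence and should produce the factor $2^{p}(2-p)^{2-p}\lambda^{-(2-p)}$ after an optimal Young splitting of the product.

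To finish, insert $B\ge\kappa(X+Y)$ and the bound on $K$ into the Nehari identity (and, when needed, into the two displayed relations). Since $(u,v)$ is nontrivial, $N>0$ and $X+Y>0$; thus for $p=2$ one has $N+\kappa(X+Y)\le N+M+B=K\le c\,N^{1/2}(X+Y)^{1/2}$, while AM--GM gives $N+\kappa(X+Y)\ge 2\kappa^{1/2}N^{1/2}(X+Y)^{1/2}$, and dividing by $N^{1/2}(X+Y)^{1/2}$ forces $\kappa\le c^{2}/4$, contradicting $\kappa>4$ (the constants can in fact be arranged so that $c^{2}/4\le 4$). The case $1<p<2$ follows the same pattern with a weighted AM--GM (Young) inequality in place of AM--GM, the exponents being those dictated by the homogeneity of the interpolation estimate. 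I expect the hard part to be exactly this bookkeeping -- selecting the interpolation inequality and the Young exponents so that the threshold comes out precisely as $\frac{(p-1)^{2}}{4}\big[\frac{2^{p}(2-p)^{2-p}}{\lambda^{2-p}}\big]^{2/(p-1)}$ -- together with the degeneracy at $p=2$, where the Gagliardo--Nirenberg exponent on $\|W\|_{2}$ collapses and the cleaner cubic identity must be used; this is also why the threshold there is the $\lambda$-independent value $4$.
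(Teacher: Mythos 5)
Your reduction to the Nehari identity, the quadratic-form bound $B\ge\kappa(X+Y)$ with $\kappa=\frac{\mu_{11}\mu_{22}-\mu_{12}^{2}}{\mu_{11}+\mu_{22}}$ (this is exactly the paper's inequality $(\ref{6-2-1})$, quoted from Jin--Seok), and the $p=2$ branch are sound: the chain $N+\kappa(X+Y)\le K\le c\,N^{1/2}(X+Y)^{1/2}$ plus AM--GM does close, and the constant check works out (with the paper's normalization it lands exactly on the threshold $4$). The Pohozaev identity and the elimination of $K$ in your first paragraph are correct but never used.

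The genuine gap is the case $1<p<2$, which you leave as ``bookkeeping'' with an unspecified Gagliardo--Nirenberg interpolation and an optimal Young splitting. No such inequality is exhibited, and there is no reason a global interpolation estimate (whose sharp constants are attained by special extremal profiles) would reproduce the stated threshold: $\frac{(p-1)^{2}}{4}\bigl[\frac{2^{p}(2-p)^{2-p}}{\lambda^{2-p}}\bigr]^{2/(p-1)}$ is precisely a \emph{pointwise} optimization constant. The idea you are missing --- and the paper's actual mechanism --- is that the cubic term is available for every $p\in(1,2]$, not only at $p=2$: for $w=u,v$ one uses the paper's inequality $(\ref{6-2-2})$, namely $2\sqrt{\kappa}\int_{\mathbb{R}^{3}}|w|^{3}dx\le\int_{\mathbb{R}^{3}}|\nabla w|^{2}dx+\kappa\int_{\mathbb{R}^{3}}\phi_{w}w^{2}dx$ (Cauchy--Schwarz/Young applied to $\int\nabla\phi_{w}\cdot\nabla|w|\,dx$), so that the Nehari identity together with $K\le 2^{p}\int(|u|^{p+1}+|v|^{p+1})dx$ gives $0\ge\int_{\mathbb{R}^{3}}u^{2}\bigl(\lambda-2^{p}|u|^{p-1}+2\sqrt{\kappa}\,|u|\bigr)dx+\int_{\mathbb{R}^{3}}v^{2}\bigl(\lambda-2^{p}|v|^{p-1}+2\sqrt{\kappa}\,|v|\bigr)dx$. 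The conclusion then follows from the elementary one-variable fact (Lemma \ref{L2-0}) that $\lambda-2^{p}s^{p-1}+ds>0$ for all $s>0$ once $d>d_{\lambda}=(p-1)\bigl[\frac{2^{p}(2-p)^{2-p}}{\lambda^{2-p}}\bigr]^{1/(p-1)}$; the hypothesis of the theorem is exactly $2\sqrt{\kappa}>d_{\lambda}$. Until you either prove your interpolation estimate with this exact constant or switch to the pointwise comparison just described, the $1<p<2$ half of the theorem is not established.
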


Finally, we give the results on the existence of multiple positive solutions.
\begin{theorem}
\label{t3}Suppose that $1<p<2$ and $\lambda ,\mu _{ij}>0.$ If $0<\mu
_{11}<\Lambda _{0}$ and $\mu _{11}\mu _{22}-\mu _{12}^{2}>0,$ then System $%
(E)$ has at least two different positive solutions. \label{tm1.4}
\end{theorem}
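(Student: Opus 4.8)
\noindent\textbf{Proof proposal for Theorem~\ref{tm1.4}.} The plan is to produce the second positive solution by minimizing the energy over the ``local-maximum'' branch of the Nehari manifold, the point being that the additional hypothesis $\mu_{11}\mu_{22}-\mu_{12}^{2}>0$ (together with $\mu_{11},\mu_{22}>0$) makes the Coulomb quadratic form
\begin{equation*}
Q(u,v):=\int_{\mathbb{R}^{3}}\big(\mu_{11}\phi_{u}u^{2}+\mu_{22}\phi_{v}v^{2}-2\mu_{12}\phi_{v}u^{2}\big)\,dx
\end{equation*}
positive definite in $(u^{2},v^{2})$ relative to the positive kernel $|x-y|^{-1}$, so that $Q(u,v)\ge0$ on $H$. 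Writing $B(u,v):=\frac{1}{2\pi}\int_{\mathbb{R}^{3}}\int_{0}^{2\pi}|u+e^{i\theta}v|^{p+1}\,d\theta\,dx$, the fibering map $\gamma_{(u,v)}(t):=J(tu,tv)=\frac{t^{2}}{2}\|(u,v)\|_{H}^{2}+\frac{t^{4}}{4}Q(u,v)-\frac{t^{p+1}}{p+1}B(u,v)$ then has, for $1<p<2$ and every nontrivial $(u,v)$, exactly one of two shapes: strictly increasing on $(0,\infty)$, or first increasing to an interior local maximum, then decreasing to an interior local minimum, then increasing to $+\infty$. Setting $\mathcal{N}:=\{(u,v)\in H\setminus\{0\}:\gamma_{(u,v)}'(1)=0\}$ and letting $\mathcal{N}^{+},\mathcal{N}^{-},\mathcal{N}^{0}$ be the subsets on which $\gamma_{(u,v)}''(1)$ is positive, negative, zero, the solution $(u_{0},v_{0})$ given by Theorem~\ref{t1} lies on $\mathcal{N}^{+}$ (on $\mathcal{N}^{-}$ the $H$-norm is bounded below by a constant independent of $\mu_{22}$, which is incompatible with $\|(u_{0},v_{0})\|_{H}\to0$); I would therefore look for a second solution on $\mathcal{N}^{-}$.

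First I would verify that $\mathcal{N}^{0}=\emptyset$ and $\mathcal{N}^{-}\neq\emptyset$ whenever $0<\mu_{11}<\Lambda_{0}$: on $\mathcal{N}^{0}$ the two equations $\gamma'(1)=\gamma''(1)=0$ fix $\|(u,v)\|_{H}^{2}$ and $Q(u,v)$ as prescribed multiples of $B(u,v)$, and substituting these into the Gagliardo--Nirenberg estimate for $B$ and the Hardy--Littlewood--Sobolev bound $Q(u,v)\le C\|(u,v)\|_{H}^{4}$ forces an inequality on $\mu_{11}$ that contradicts $\mu_{11}<\Lambda_{0}$ --- this is exactly the role of $A(p)$ and $S_{p+1}$ in the definition of $\Lambda_{0}$, and the computation is the same one already carried out for Theorem~\ref{t1}. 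Consequently $\mathcal{N}=\mathcal{N}^{+}\sqcup\mathcal{N}^{-}$ is a disjoint union of $C^{1}$ manifolds, and the standard natural-constraint argument (which uses $\mathcal{N}^{0}=\emptyset$) shows that any critical point of $J|_{\mathcal{N}^{-}}$ is a free critical point of $J$, hence a solution of $(E)$. Moreover $B(u,v)=\|(u,v)\|_{H}^{2}+Q(u,v)$ on $\mathcal{N}$, so
\begin{equation*}
J(u,v)=\tfrac{p-1}{2(p+1)}\|(u,v)\|_{H}^{2}-\tfrac{3-p}{4(p+1)}Q(u,v)\qquad\text{on }\mathcal{N},
\end{equation*}
and on $\mathcal{N}^{-}$ the inequality $\gamma''(1)<0$ reads $(3-p)Q(u,v)<(p-1)\|(u,v)\|_{H}^{2}$, whence $J(u,v)>\tfrac{p-1}{4(p+1)}\|(u,v)\|_{H}^{2}$ there. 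Combined with the uniform bound $\|(u,v)\|_{H}\ge\delta>0$ on $\mathcal{N}$ (coming from $\|(u,v)\|_{H}^{2}\le B(u,v)\le C\|(u,v)\|_{H}^{p+1}$ with $p+1>2$), this yields $\alpha^{-}:=\inf_{\mathcal{N}^{-}}J\ge\tfrac{p-1}{4(p+1)}\delta^{2}>0$ and shows that minimizing sequences for $\alpha^{-}$ are bounded in $H$.

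Next I would prove that $\alpha^{-}$ is attained. The cleanest route is to carry out the minimization in the radial subspace $H_{\mathrm{rad}}^{1}(\mathbb{R}^{3})\times H_{\mathrm{rad}}^{1}(\mathbb{R}^{3})$: since system $(E)$ is $O(3)$-equivariant a radial critical point is a critical point of $J$ by the principle of symmetric criticality, and the embedding of $H_{\mathrm{rad}}^{1}(\mathbb{R}^{3})$ into $L^{q}(\mathbb{R}^{3})$, $2<q<6$, is compact. A bounded radial minimizing sequence $(u_{n},v_{n})$ then converges, along a subsequence, weakly and in $L^{q}$ to some limit $(u_{1},v_{1})$, which is nonzero because $B(u_{n},v_{n})=\|(u_{n},v_{n})\|_{H}^{2}+Q(u_{n},v_{n})\ge\delta^{2}$ does not vanish in $L^{p+1}$; the Coulomb term and the $\theta$-averaged nonlinear term pass to the limit, and a short argument with the Nehari identity (comparing fibering maps and using $\mathcal{N}^{0}=\emptyset$) upgrades the convergence to strong convergence in $H$, so $(u_{1},v_{1})\in\mathcal{N}^{-}$, $J(u_{1},v_{1})=\alpha^{-}$, and $(u_{1},v_{1})$ solves $(E)$. (If non-radial solutions are wanted, the same conclusion is reachable in all of $H$ via a concentration--compactness / Brezis--Lieb splitting of the minimizing sequence, where the hypothesis $0<\mu_{11}<\Lambda_{0}$ serves to keep $\alpha^{-}$ strictly below the first energy level at which dichotomy could arise.)

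Finally I would make $(u_{1},v_{1})$ positive and check it differs from $(u_{0},v_{0})$. From $|u+e^{i\theta}v|^{2}=u^{2}+v^{2}+2uv\cos\theta$ and the substitution $\theta\mapsto\theta+\pi$ one gets $B(u,v)=B(|u|,|v|)$, while clearly $Q(u,v)=Q(|u|,|v|)$ and $\|(|u|,|v|)\|_{H}=\|(u,v)\|_{H}$; hence $(|u_{1}|,|v_{1}|)\in\mathcal{N}^{-}$ is again a minimizer, so we may assume $u_{1},v_{1}\ge0$. That the minimizer is vectorial follows from an energy comparison: if $(u_{1},v_{1})=(u^{\ast},0)$ with $u^{\ast}>0$ a scalar solution, then moving along the curve $\beta\mapsto(\cos\beta\,u^{\ast},\sin\beta\,u^{\ast})$ (rescaled onto $\mathcal{N}^{-}$) keeps $\|\cdot\|_{H}$ fixed while, to second order in $\beta$, $Q$ strictly decreases --- this is where the attractive coupling term $-2\mu_{12}\phi_{v}u^{2}$ enters --- and $B$ strictly increases, so an envelope computation produces a point of $\mathcal{N}^{-}$ with energy strictly below $\alpha^{-}$, a contradiction; the same works with the roles of $u$ and $v$ exchanged. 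Since the $\theta$-averaged nonlinearity $\frac{1}{2\pi}\int_{0}^{2\pi}|w+e^{i\theta}z|^{p-1}(w+e^{i\theta}z)\,d\theta$ is real and nonnegative for $w,z\ge0$, the strong maximum principle applied to each equation of $(E)$ then gives $u_{1}>0$ and $v_{1}>0$; and $(u_{1},v_{1})\neq(u_{0},v_{0})$ because these lie in the disjoint sets $\mathcal{N}^{-}$ and $\mathcal{N}^{+}$. The part I expect to be the main obstacle is the attainment of $\alpha^{-}$ --- in the non-radial formulation, the usual delicate loss-of-compactness analysis that must be controlled by $\mu_{11}<\Lambda_{0}$ --- together with the subsidiary but still delicate tasks of excluding semi-trivial minimizers and of promoting the nonnegative minimizer to a strictly positive one.
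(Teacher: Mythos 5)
There is a genuine gap, and it sits at the heart of the multiplicity claim: you have misidentified the branch carrying the solution of Theorem~\ref{t1}, and as a result your proposed ``second'' solution is not shown to be (and in fact is not) a new one. In the paper, the solution $\left(u_{0},v_{0}\right)$ of Theorem~\ref{t1} is constructed precisely as a minimizer of $J$ over $\mathbf{M}^{(1)}\subset \mathbf{M}^{-}$, i.e.\ over the branch where $h_{(u,v)}^{\prime \prime }(1)<0$ --- your $\mathcal{N}^{-}$ --- at the positive level $\alpha ^{-}=\inf_{\mathbf{M}^{(1)}}J>0$. Your argument placing it on $\mathcal{N}^{+}$ (``on $\mathcal{N}^{-}$ the $H$-norm is bounded below by a constant independent of $\mu _{22}$, incompatible with $\Vert (u_{0},v_{0})\Vert _{H}\rightarrow 0$'') does not work: under $\mu _{11}\mu _{22}-\mu _{12}^{2}>0$ your own observation $Q\geq 0$ combined with the Nehari identity and Sobolev gives $\Vert (u,v)\Vert _{H}^{2}\leq B(u,v)\leq C_{0}\Vert (u,v)\Vert _{H}^{p+1}$, hence a $\mu$-independent lower bound on \emph{all} of $\mathcal{N}$, not just on $\mathcal{N}^{-}$, so this criterion distinguishes nothing. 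Consequently your program --- minimize $J$ on $\mathcal{N}^{-}$, obtain a positive minimizer at level $\alpha ^{-}>0$, and declare it distinct from $(u_{0},v_{0})$ because the two lie on different branches --- merely re-runs the construction of the first solution, and the final distinctness step collapses. A secondary weak point: you assert $\mathcal{N}^{0}=\emptyset $ under $0<\mu _{11}<\Lambda _{0}$ alone, but the Hardy--Littlewood--Sobolev bound on $Q$ naturally involves the larger coefficient $\mu _{22}$, which is unrestricted; the paper never proves such a global statement and only shows that the low-energy slice $\mathbf{M}\left[ D_{0}\right]$ splits as $\mathbf{M}^{(1)}\cup \mathbf{M}^{(2)}$ with $\mathbf{M}^{(1)}\subset \mathbf{M}^{-}$ and $\mathbf{M}^{(2)}\subset \mathbf{M}^{+}$.

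What is actually needed in the regime $1<p<2$, $\mu _{11}\mu _{22}-\mu _{12}^{2}>0$ is a solution at \emph{negative} energy, and this is where the paper goes: by Proposition~\ref{p6}, $J$ is coercive, bounded below and satisfies $(PS)$ on the radial space $H_{r}$ (a Ruiz-type inequality exploiting $\det (\mu _{ij})>0$), and $\inf_{H_{r}}J<\beta _{r,\mu _{11}}^{(2)}<0$, which is seen by testing with $\left( \sqrt{s_{\min }}\,w_{r,\mu _{11}}^{(2)},\sqrt{1-s_{\min }}\,w_{r,\mu _{11}}^{(2)}\right)$, where $w_{r,\mu _{11}}^{(2)}$ is the negative-energy radial solution of $\left( SP_{\mu _{11}}\right)$ available exactly because $1<p<2$ and $\mu _{11}<\Lambda _{0}$ (Theorem~\ref{T5}) and Jensen's inequality makes the $\theta$-averaged nonlinearity beat the decoupled one (Lemma~\ref{L6-2}$(i)$). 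The global radial minimizer is then a genuine critical point by symmetric criticality, it is vectorial because any semi-trivial critical point on $\mathbf{M}^{+}\cap H_{r}$ has energy at least $\beta _{r,\mu _{11}}^{(2)}$ (Lemma~\ref{L6-2}$(ii)$), it can be taken nonnegative and hence positive, and it differs from the Theorem~\ref{t1} solution simply because its energy is negative while $\alpha ^{-}>0$. This negative-energy solution, sitting on the fibering local-minimum branch (your $\mathcal{N}^{+}$), is exactly the object your proposal never looks for --- you expect the second solution at the positive level $\alpha ^{-}$ --- so the proof as proposed cannot be completed without replacing its main construction by a global (radial) minimization of this kind.
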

The key point in the proof of Theorem \ref{tm1.4}  lies in establishing
Lions type inequalities within the context of the vector functions
(or see \cite{Jin, R1}). Using these inequalities in conjunction with Strauss's inequality in $H_{r}:=H^{1}_{r}\left(\mathbb{R}^{3}\right)\times H^{1}_{r}\left(\mathbb{R}^{3}\right)$ and the comparison of energy, we are able to demonstrate the existence of two different positive solutions.

The rest of this paper is organized as follows. After introducing some
preliminary results in Section 2, we prove Theorem \ref{tm1.2} in Section 3.
In Section 4, the proof of Theorem \ref{tm1.3} is given and, in Section 5, the proof of Theorem \ref{tm1.4}.

\section{Preliminaries}
We first establish the following estimates on the nonlinearity.

\begin{lemma}
\label{L2-0}Suppose that $1<p<2,$ $\lambda ,d>0$ is given. Let $f_{d}\left(
s\right) =\lambda -2^{p}s^{p-1}+ds$ for $s>0.$ Then there exist $d_{\lambda
}:=\left( p-1\right) \left[ \frac{2^{p}\left( 2-p\right) ^{2-p}}{\lambda
^{2-p}}\right] ^{1/\left( p-1\right) }>0$ and $s_{0}\left( d\right) :=\left(
\frac{2^{p}\left( p-1\right) }{d}\right) ^{1/\left( 2-p\right) }>0$
satisfying\newline
$\left( i\right) $ $f_{d}^{\prime }\left( s_{0}\left( d\right) \right) =0$
and $f_{d}\left( s_{0}\left( d_{\lambda }\right) \right) =0;$\newline
$\left( ii\right) $ for each $d<d_{\lambda }$ there exist $\eta _{d},\xi
_{d}>0$ such that $\eta _{d}<s_{0}\left( d\right) <\xi _{d}$ and $%
f_{d}\left( s\right) <0$ for all $s\in \left( \eta _{d},\xi _{d}\right) ;$%
\newline
$\left( iii\right) $ for each $d>d_{\lambda },$ $f_{d}\left( s\right) >0$
for all $s>0.$
\end{lemma}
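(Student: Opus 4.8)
The plan is to treat this as a one-variable calculus problem for $f_d$: locate its unique critical point, show it is a global minimum, and then study how that minimum value varies with the parameter $d$.

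First I would differentiate: $f_d'(s)=-2^p(p-1)s^{p-2}+d$. Since $1<p<2$, the map $s\mapsto s^{p-2}$ decreases strictly from $+\infty$ to $0$ on $(0,\infty)$, so $f_d'$ is strictly increasing, negative near $0$, and tends to $d>0$ at infinity; hence $f_d'$ vanishes at a unique point, and solving $2^p(p-1)s^{p-2}=d$ gives precisely $s=s_0(d)=\bigl(2^p(p-1)/d\bigr)^{1/(2-p)}$. Thus $s_0(d)$ is the unique global minimiser of $f_d$ on $(0,\infty)$, with $f_d$ strictly decreasing on $(0,s_0(d))$ and strictly increasing on $(s_0(d),\infty)$. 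This already gives $f_d'(s_0(d))=0$, the first assertion of $(i)$.

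Next I would compute the minimum value. From $s_0(d)^{p-2}=d/(2^p(p-1))$ one gets $2^ps_0(d)^{p-1}=d\,s_0(d)/(p-1)$, hence
\[
f_d(s_0(d))=\lambda-\frac{2-p}{p-1}\,d\,s_0(d)=\lambda-\frac{2-p}{p-1}\,\bigl(2^p(p-1)\bigr)^{1/(2-p)}d^{(1-p)/(2-p)}.
\]
Because $(1-p)/(2-p)<0$, the right-hand side is strictly increasing in $d$, runs from $-\infty$ (as $d\to 0^+$) to $\lambda>0$ (as $d\to\infty$), and therefore has a unique zero; setting it equal to $0$ and solving for $d$ (the simplification boils down to the exponent identity $(p-1)^{1/(p-1)}(p-1)^{-(2-p)/(p-1)}=p-1$) produces exactly the closed form $d_\lambda=(p-1)\bigl[2^p(2-p)^{2-p}/\lambda^{2-p}\bigr]^{1/(p-1)}$, and by construction $f_{d_\lambda}(s_0(d_\lambda))=0$, completing $(i)$.

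The remaining two parts then follow from monotonicity of $d\mapsto f_d(s_0(d))$. For $(iii)$: if $d>d_\lambda$ then $f_d(s)\ge f_d(s_0(d))>0$ for every $s>0$. For $(ii)$: if $d<d_\lambda$ then $f_d(s_0(d))<0$, while $f_d(s)\to\lambda>0$ as $s\to0^+$ and $f_d(s)\to+\infty$ as $s\to\infty$; combined with strict monotonicity on each side of $s_0(d)$, the intermediate value theorem yields a unique $\eta_d\in(0,s_0(d))$ and a unique $\xi_d\in(s_0(d),\infty)$ with $f_d(\eta_d)=f_d(\xi_d)=0$ and $f_d<0$ precisely on $(\eta_d,\xi_d)$. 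The only even mildly delicate point in the argument is the algebraic identification of $d_\lambda$ with the stated closed form; everything else is routine.
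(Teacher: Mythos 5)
Your argument is correct: the critical-point computation, the formula for the minimum value $f_d(s_0(d))=\lambda-\frac{2-p}{p-1}\,d\,s_0(d)$, its strict monotonicity in $d$, and the algebraic identification of $d_\lambda$ all check out, and parts $(ii)$ and $(iii)$ follow as you say. The paper dismisses this lemma with "by a straightforward calculation," and your proof is exactly that calculation carried out in full, so there is nothing to compare beyond noting that you have supplied the details the authors omitted.
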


\begin{proof}
By a straightforward calculation, we can show that the results hold.
\end{proof}

We need the following results.

\begin{lemma}
\label{L2-1}Suppose that $1<p<3$ and $\mu _{ij}>0.$ Let $g\left( s\right)
=\mu _{11}s^{2}+\mu _{22}\left( 1-s\right) ^{2}-2\mu _{12}s\left( 1-s\right)
$ for $s\in \left[ 0,1\right] .$ Then there exists $0<s_{\min }=\frac{\mu
_{22}+\mu _{12}}{\mu _{11}+\mu _{22}+2\mu _{12}}<1$ such that $\min_{s\in %
\left[ 0,1\right] }g\left( s\right) =g\left( s_{\min }\right) =\frac{\mu
_{11}\mu _{22}-\mu _{12}^{2}}{\mu _{11}+\mu _{22}+2\mu _{12}}<\mu _{ii}$ for
$i=1,2.$
\end{lemma}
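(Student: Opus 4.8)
The plan is to analyze the quadratic function $g(s)=\mu_{11}s^2+\mu_{22}(1-s)^2-2\mu_{12}s(1-s)$ directly by expanding it into the standard form $as^2+bs+c$ and locating its vertex. First I would collect terms: writing $g(s)=(\mu_{11}+\mu_{22}+2\mu_{12})s^2-(2\mu_{22}+2\mu_{12})s+\mu_{22}$, so the leading coefficient is $a=\mu_{11}+\mu_{22}+2\mu_{12}>0$ since all $\mu_{ij}>0$; hence $g$ is a strictly convex parabola and its unconstrained minimum is attained at $s_{\min}=\frac{\mu_{22}+\mu_{12}}{\mu_{11}+\mu_{22}+2\mu_{12}}$. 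A quick check shows $0<s_{\min}<1$: the numerator is positive and strictly smaller than the denominator because $\mu_{11}>0$. Therefore the vertex lies inside $[0,1]$ and $\min_{s\in[0,1]}g(s)=g(s_{\min})$.

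Next I would evaluate $g(s_{\min})$. The cleanest route is the formula $g(s_{\min})=c-\frac{b^2}{4a}=\frac{4ac-b^2}{4a}$ with $a=\mu_{11}+\mu_{22}+2\mu_{12}$, $b=-2(\mu_{22}+\mu_{12})$, $c=\mu_{22}$. Computing the discriminant-type quantity $4ac-b^2=4\mu_{22}(\mu_{11}+\mu_{22}+2\mu_{12})-4(\mu_{22}+\mu_{12})^2$ and expanding gives $4(\mu_{11}\mu_{22}-\mu_{12}^2)$, so $g(s_{\min})=\frac{\mu_{11}\mu_{22}-\mu_{12}^2}{\mu_{11}+\mu_{22}+2\mu_{12}}$, as claimed. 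Alternatively one may substitute $s_{\min}$ back directly; either way this is a routine algebraic simplification.

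Finally I would verify the inequality $g(s_{\min})<\mu_{ii}$ for $i=1,2$. Observe simply that $g(1)=\mu_{11}$ and $g(0)=\mu_{22}$, and since $s_{\min}\in(0,1)$ is the \emph{strict} minimizer of the strictly convex $g$, we have $g(s_{\min})<g(1)=\mu_{11}$ and $g(s_{\min})<g(0)=\mu_{22}$, which is exactly $g(s_{\min})<\mu_{ii}$ for $i=1,2$. This avoids any case analysis involving the sign of $\mu_{11}\mu_{22}-\mu_{12}^2$.

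There is no real obstacle here — the statement is a packaging of elementary facts about convex quadratics. The only mild care needed is to make sure the vertex genuinely lies in the open interval $(0,1)$ (so that the minimum is interior and the inequalities at the endpoints are strict) and to carry out the expansion of $4ac-b^2$ without sign errors; both are straightforward given $\mu_{ij}>0$.
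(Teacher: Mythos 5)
Your proof is correct and follows essentially the same route as the paper: expand $g$ into standard quadratic form, locate the vertex $s_{\min}\in(0,1)$, and compute $g(s_{\min})=\frac{\mu_{11}\mu_{22}-\mu_{12}^{2}}{\mu_{11}+\mu_{22}+2\mu_{12}}$. Your justification of the final inequality via $g(0)=\mu_{22}$, $g(1)=\mu_{11}$ and strict convexity is a slightly cleaner touch than the paper, which simply asserts $\frac{\mu_{11}\mu_{22}-\mu_{12}^{2}}{\mu_{11}+\mu_{22}+2\mu_{12}}<\mu_{ii}$ without comment, but the overall argument is the same elementary analysis of a convex quadratic.
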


\begin{proof}
Since
\begin{eqnarray*}
g\left( s\right) &=&\mu _{11}s^{2}+\mu _{22}\left( 1-s\right) ^{2}-2\mu
_{12}s\left( 1-s\right) \\
&=&\left( \mu _{11}+\mu _{22}+2\mu _{12}\right) s^{2}-2\left( \mu _{22}+\mu
_{12}\right) s+\mu _{22}
\end{eqnarray*}%
and%
\begin{equation*}
g^{\prime }\left( s\right) =2\left( \mu _{11}+\mu _{22}+2\mu _{12}\right)
s-2\left( \mu _{22}+\mu _{12}\right) ,
\end{equation*}%
we conclude that there exists%
\begin{equation*}
0<s_{\min }=\frac{\mu _{22}+\mu _{12}}{\mu _{11}+\mu _{22}+2\mu _{12}}<1
\end{equation*}%
such that%
\begin{eqnarray*}
\min_{s\in \left[ 0,1\right] }g\left( s\right) &=&g\left( s_{\min }\right)
=g\left( \frac{\mu _{22}+\mu _{12}}{\mu _{11}+\mu _{22}+2\mu _{12}}\right) =%
\frac{\left( \mu _{22}+\mu _{12}\right) ^{2}}{\mu _{11}+\mu _{22}+2\mu _{12}}%
-2\frac{\left( \mu _{22}+\mu _{12}\right) ^{2}}{\mu _{11}+\mu _{22}+2\mu
_{12}}+\mu _{22} \\
&=&\mu _{22}-\frac{\left( \mu _{22}+\mu _{12}\right) ^{2}}{\mu _{11}+\mu
_{22}+2\mu _{12}} \\
&=&\frac{\mu _{11}\mu _{22}-\mu _{12}^{2}}{\mu _{11}+\mu _{22}+2\mu _{12}}%
<\mu _{ii}\text{ for }i=1,2.
\end{eqnarray*}%
This completes the proof.
\end{proof}

The function $\phi _{u}$ defined in $\left( \ref{1-2}\right) $ possesses
certain properties \cite{AP,R1} and the Hardy-Littlewood-Sobolev and
Gagliardo-Nirenberg inequalities, thus we have the following results.

\begin{lemma}
\label{L2-3}For each $u\in H^{1}\left( \mathbb{R}^{3}\right) $, the
following two inequalities are true.
\end{lemma}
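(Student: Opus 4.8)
The plan is to obtain both inequalities from two classical tools --- the Hardy--Littlewood--Sobolev (HLS) inequality and the Gagliardo--Nirenberg (GN) inequality --- together with the explicit Riesz representation of $\phi_u$ in $(\ref{1-2})$. I would start from the identity $\int_{\mathbb{R}^3}\phi_u u^2\,dx=\int_{\mathbb{R}^3}\int_{\mathbb{R}^3}\frac{u^2(x)\,u^2(y)}{|x-y|}\,dx\,dy$, which simultaneously shows that $\phi_u\in D^{1,2}(\mathbb{R}^3)$ is the weak solution of $-\Delta\phi_u=4\pi u^2$ (up to the usual normalisation of the Newtonian potential) and that $\|\phi_u\|_{D^{1,2}}^2$ is a fixed multiple of $\int_{\mathbb{R}^3}\phi_u u^2\,dx\ge0$, so $J$ is well defined. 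Applying HLS with Riesz kernel exponent $1$ in dimension $3$, the balance relation $\tfrac{2}{r}+\tfrac{1}{3}=2$ forces $r=\tfrac{6}{5}$, hence $\int_{\mathbb{R}^3}\phi_u u^2\,dx\le C_{\mathrm{HLS}}\,\|u^2\|_{L^{6/5}}^2=C_{\mathrm{HLS}}\,\|u\|_{L^{12/5}}^4$; this is the first inequality, with the constant recorded in whichever Sobolev-constant normalisation ($S_{12/5}$, $\bar S$, etc.) the paper uses elsewhere.

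For the second inequality I would interpolate the exponent $\tfrac{12}{5}$ between $L^2$ and $L^{6}=L^{2^{*}}$: the GN inequality gives $\|u\|_{L^{12/5}}\le C_{\mathrm{GN}}\,\|\nabla u\|_{L^2}^{1/4}\,\|u\|_{L^2}^{3/4}$, the exponent $\tfrac14$ being pinned down by the scaling identity $\tfrac{5}{12}=\tfrac{1}{6}\cdot\tfrac14+\tfrac12\cdot\tfrac34$. Raising to the fourth power and inserting into the first step yields $\int_{\mathbb{R}^3}\phi_u u^2\,dx\le C\,\|\nabla u\|_{L^2}\,\|u\|_{L^2}^3\le C\,\|u\|_{H^1}^4$, the $H^1$-form being the one convenient for the boundedness arguments of Sections 3--5. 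Should the intended second inequality instead be the bilinear Cauchy--Schwarz bound $\int_{\mathbb{R}^3}\phi_u v^2\,dx\le\big(\int_{\mathbb{R}^3}\phi_u u^2\,dx\big)^{1/2}\big(\int_{\mathbb{R}^3}\phi_v v^2\,dx\big)^{1/2}$, it follows at once by Cauchy--Schwarz applied to the symmetric measure $|x-y|^{-1}\,dx\,dy$ in the double integral.

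There is no genuine obstacle here, since every ingredient is classical; the only point requiring real attention is the bookkeeping of the sharp constants, so that the constants appearing in Lemma \ref{L2-3} are expressed in exactly the normalisation under which the threshold $\Lambda_0$ and the energy comparisons in Theorems \ref{tm1.2}--\ref{tm1.4} are later stated. A minor further care is to confirm the integrability needed to justify the integration by parts in the identity above, which is immediate once $u\in H^1(\mathbb{R}^3)\hookrightarrow L^{12/5}(\mathbb{R}^3)$ and $\phi_u\in D^{1,2}(\mathbb{R}^3)\hookrightarrow L^6(\mathbb{R}^3)$ are in hand.
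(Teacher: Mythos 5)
Your argument is exactly the route the paper takes: the paper offers no written proof of Lemma \ref{L2-3}, merely noting that it follows from the representation $\phi_u(x)=\int_{\mathbb{R}^3}\frac{u^2(y)}{|x-y|}\,dy$ (which gives $\phi_u\ge 0$ at once) together with the Hardy--Littlewood--Sobolev and Gagliardo--Nirenberg inequalities with the exponents $r=6/5$ and $\theta=1/4$ that you identify, yielding $\int_{\mathbb{R}^3}\phi_u u^2\,dx\le C\,\|\nabla u\|_{L^2}\|u\|_{L^2}^{3}$, which is precisely the stated bound since $\frac{16}{3\sqrt{3}\pi\lambda^{3/2}}\bigl(\int\lambda u^2\bigr)^{3/2}\bigl(\int|\nabla u|^2\bigr)^{1/2}=\frac{16}{3\sqrt{3}\pi}\|u\|_{L^2}^{3}\|\nabla u\|_{L^2}$. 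The only point you leave unfinished is the verification that the product of the optimal constants equals $\frac{16}{3\sqrt{3}\pi}$, which you correctly flag as bookkeeping and which the paper itself delegates to the cited references.
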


\begin{itemize}
\item[$\left( i\right) $] $\phi _{u}\geq 0;$

\item[$\left( ii\right) $] $\int_{\mathbb{R}^{3}}\phi _{u}u^{2}dx\leq \frac{%
16}{3\sqrt{3}\pi \lambda ^{\frac{3}{2}}}\left( \int_{\mathbb{R}^{3}}\lambda
u^{2}dx\right) ^{\frac{3}{2}}\left( \int_{\mathbb{R}^{3}}|\nabla
u|^{2}dx\right) ^{\frac{1}{2}}$ for $\lambda >0.$
\end{itemize}

Next, we consider the following Schr\"{o}dinger--Poisson equation:
\begin{equation}
\begin{array}{ll}
-\Delta u+\lambda u+\mu \int_{\mathbb{R}^{3}}\phi _{_{u}}u^{2}=\left\vert
u\right\vert ^{p-2}u & \text{ in }\mathbb{R}^{3}.%
\end{array}
\tag*{$\left( SP_{\mu }\right) $}
\end{equation}%
Such equation is variational and its solutions are critical points of the
corresponding energy functional $I:H^{1}\left( \mathbb{R}^{3}\right)
\rightarrow \mathbb{R}$ defined as%
\begin{equation*}
I_{\mu }(u)=\frac{1}{2}\int_{\mathbb{R}^{3}}|\nabla u|^{2}+\lambda u^{2}dx+%
\frac{\mu }{4}\int_{\mathbb{R}^{3}}\phi _{_{u}}u^{2}dx-\frac{1}{p+1}\int_{%
\mathbb{R}^{3}}\left\vert u\right\vert ^{p+1}dx.
\end{equation*}%
Note that $u\in H^{1}\left( \mathbb{R}^{3}\right) $ is a solution of
Equation $\left( SP_{\mu }\right) $ if and only if $u$ is a critical point
of $I_{\mu }.$ Next, we define the Nehari manifold of functional $I_{\mu }$
as follows,
\begin{equation*}
\mathbf{N}_{\mu }:=\{u\in H^{1}\left( \mathbb{R}^{3}\right) \backslash
\{0\}:\left\langle I_{\mu }^{\prime }\left( u\right) ,u\right\rangle =0\}.
\end{equation*}%
The Nehari manifold $\mathbf{N}_{\mu }$ is closely linked to the behavior of
the function of the form $f_{u}:t\rightarrow I_{\mu }\left( tu\right) $ for $%
t>0.$ Such maps are known as fibering maps and were introduced by Dr\'{a}%
bek-Pohozaev \cite{DP}, and were further discussed by Brown-Zhang \cite{BZ},
Brown-Wu \cite{BW1,BW2} and many others. For $u\in H^{1}\left( \mathbb{R}^{3}\right)
,$ we find%
\begin{eqnarray*}
f_{u}\left( t\right) &=&\frac{t^{2}}{2}\int_{\mathbb{R}^{3}}|\nabla
u|^{2}+\lambda u^{2}dx+\frac{t^{4}\mu }{4}\int_{\mathbb{R}^{3}}\phi
_{_{u}}u^{2}dx-\frac{t^{p+1}}{p+1}\int_{\mathbb{R}^{3}}\left\vert
u\right\vert ^{p+1}dx, \\
f_{u}^{\prime }\left( t\right) &=&t\int_{\mathbb{R}^{3}}|\nabla
u|^{2}+\lambda u^{2}dx+t^{3}\mu \int_{\mathbb{R}^{3}}\phi
_{_{u}}u^{2}dx-t^{p}\int_{\mathbb{R}^{3}}\left\vert u\right\vert ^{p+1}dx, \\
f_{u}^{\prime \prime }\left( t\right) &=&\int_{\mathbb{R}^{3}}|\nabla
u|^{2}+\lambda u^{2}dx+3t^{2}\mu \int_{\mathbb{R}^{3}}\phi
_{_{u}}u^{2}dx-pt^{p-1}\int_{\mathbb{R}^{3}}\left\vert u\right\vert ^{p+1}dx.
\end{eqnarray*}%
As a direct consequence, we have
\begin{equation*}
tf_{u}^{\prime }\left( t\right) =\int_{\mathbb{R}^{3}}|\nabla
tu|^{2}+\lambda \left( tu\right) ^{2}dx+\mu \int_{\mathbb{R}^{3}}\phi
_{_{tu}}\left( tu\right) ^{2}dx-\int_{\mathbb{R}^{3}}\left\vert
tu\right\vert ^{p+1}dx,
\end{equation*}%
and so, for $u\in H^{1}\left( \mathbb{R}^{3}\right) $ and $t>0,$ $%
f_{u}^{\prime }\left( t\right) =0$ holds if and only if $tu\in \mathbf{N}%
_{\mu }.$ In particular, $f_{u}^{\prime }\left( 1\right) =0$  if and
only if $u\in \mathbf{N}_{\mu }.$ It is then natural to split $\mathbf{N}%
_{\mu }$ into three parts corresponding to the local minima, local maxima
and points of inflection. Following \cite{T}, we define
\begin{eqnarray*}
\mathbf{N}_{\mu }^{+} &=&\{u\in \mathbf{N}_{\mu }:f_{u}^{\prime \prime
}\left( 1\right) >0\}, \\
\mathbf{N}_{\mu }^{0} &=&\{u\in \mathbf{N}_{\mu }:f_{u}^{\prime \prime
}\left( 1\right) =0\}, \\
\mathbf{N}_{\mu }^{-} &=&\{u\in \mathbf{N}_{\mu }:f_{u}^{\prime \prime
}\left( 1\right) <0\}.
\end{eqnarray*}%
Let
\begin{equation*}
\beta _{\mu }:=\inf_{u\in \mathbf{N}_{\mu }^{-}}I_{\mu }\left( u\right) .
\end{equation*}%
Using the argument of theorem 1.3 and lemma 2.4 in \cite{SWF} (or see
Lemma \ref{g7} in below), for each $1<p<3$ and $0<\mu <\Lambda _{0},$
Equation $\left( SP_{\mu }\right) $ has a positive solution $w_{\mu }\in
\mathbf{N}_{\mu }^{-}$ such that%
\begin{equation*}
\left\Vert w_{\mu }\right\Vert _{H^{1}}<\left( \frac{3\sqrt{3}\left(
p-1\right) \pi \lambda ^{\frac{3}{2}}}{16\mu (3-p)}\right) ^{1/2}
\end{equation*}%
and%
\begin{equation*}
d_{0}<\beta _{\mu }=I_{\mu }\left( w_{\mu }\right) <\frac{A\left( p\right)
\left( p-1\right) }{2\left( p+1\right) }\left( \frac{2S_{p+1}^{p+1}}{3-p}%
\right) ^{2/(p-1)}\text{ for some }d_{0}>0.
\end{equation*}%
In particular, by \cite{CKW}, if $2\leq p<3,$ then $w_{\mu }$ is a positive
ground state solution of Equation $\left( SP_{\mu }\right) .$ Moreover, by
\begin{equation*}
I_{\mu _{1}}(u)\leq I_{\mu _{2}}(u)\text{ for all }\mu _{1}<\mu _{2}
\end{equation*}%
and
\begin{equation*}
\beta _{\mu }:=\inf_{u\in \mathbf{N}_{\mu }^{-}}I_{\mu }\left( u\right)
=+\infty ,\text{ if }\mathbf{N}_{\mu }^{-}=\emptyset ,
\end{equation*}%
we may assume that $\beta _{\mu _{1}}<\beta _{\mu _{2}}$ for all $\mu
_{1}<\mu _{2}.$ Thus, by \cite{SWF, R1}, if $1<p<2,$ we have the
following result.

\begin{theorem}
\label{T5}Suppose that $1<p<2.$ Then for each $0<\mu <\Lambda _{0},$
Equation $\left( SP_{\mu }\right) $ has at least two positive radial
solutions $w_{r,\mu }^{\left( 1\right) }$ and $w_{r,\mu }^{\left( 2\right) }$
with%
\begin{equation*}
I_{\mu }\left( w_{r,\mu }^{\left( 1\right) }\right) =\beta _{r,\mu }^{\left(
1\right) }:=\inf_{u\in \mathbf{N}_{\mu }^{-}\cap H_{r}^{1}\left( \mathbb{R}%
^{3}\right) }I_{\mu }\left( u\right) >0
\end{equation*}%
and
\begin{equation*}
I_{\mu }\left( w_{r,\mu }^{\left( 2\right) }\right) =\beta _{r,\mu }^{\left(
2\right) }:=\inf_{u\in \mathbf{N}_{\mu }^{+}\cap H_{r}^{1}\left( \mathbb{R}%
^{3}\right) }I_{\mu }\left( u\right) =\inf_{u\in H_{r}^{1}\left( \mathbb{R}%
^{3}\right) }I_{\mu }\left( u\right) <0.
\end{equation*}
\end{theorem}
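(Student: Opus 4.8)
The plan is to produce $w_{r,\mu}^{(1)}$ and $w_{r,\mu}^{(2)}$ as minimizers of $I_\mu$ over $\mathbf{N}_\mu^{-}\cap H_r^1(\mathbb{R}^3)$ and over all of $H_r^1(\mathbb{R}^3)$ respectively, the compact embedding $H_r^1(\mathbb{R}^3)\hookrightarrow L^q(\mathbb{R}^3)$ for $2<q<6$ (in particular $q=p+1$, since $1<p<2$ gives $2<p+1<3$) supplying the compactness that $H^1(\mathbb{R}^3)$ lacks. The two solutions are then automatically distinct because we will show $\beta_{r,\mu}^{(1)}>0>\beta_{r,\mu}^{(2)}$. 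As preparation I would record the fibering picture: for $u\in H_r^1(\mathbb{R}^3)\setminus\{0\}$, since $2<p+1<3<4$ the map $f_u(t)=I_\mu(tu)$ vanishes at $0$, increases near $0$, and $f_u'(t)=t\,h_u(t)$ with $h_u(t)=\|u\|_\lambda^2+t^2\mu\int_{\mathbb{R}^3}\phi_u u^2\,dx-t^{p-1}\int_{\mathbb{R}^3}|u|^{p+1}\,dx$ ($\|u\|_\lambda^2:=\int_{\mathbb{R}^3}|\nabla u|^2+\lambda u^2\,dx$) convex in $t$ and tending to $+\infty$; so $h_u$ has at most two positive zeros, and when it has two, $0<t^-(u)<t^+(u)$, then $t^-(u)u\in\mathbf{N}_\mu^{-}$ with $I_\mu(t^-(u)u)=\max_{0\le t\le t^+(u)}I_\mu(tu)>0$ and $t^+(u)u\in\mathbf{N}_\mu^{+}$ with $I_\mu(t^+(u)u)=\min_{t\ge t^-(u)}I_\mu(tu)$. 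On $\mathbf{N}_\mu$ one has $I_\mu(u)=\tfrac{p-1}{2(p+1)}\|u\|_\lambda^2-\tfrac{3-p}{4(p+1)}\mu\int\phi_u u^2\,dx$; combined with the sign of $f_u''(1)$ this gives $I_\mu>\tfrac{p-1}{4(p+1)}\|u\|_\lambda^2$ on $\mathbf{N}_\mu^{-}$ (hence $I_\mu$ is coercive there), $I_\mu=\tfrac{p-1}{4(p+1)}\|u\|_\lambda^2>0$ on $\mathbf{N}_\mu^{0}$, and---using Lemma \ref{L2-3}(ii) on the last identity---the gap $\inf_{\mathbf{N}_\mu^{0}}I_\mu\ge\tfrac{A(p)(p-1)}{2(p+1)}\big(\tfrac{2S_{p+1}^{p+1}}{3-p}\big)^{2/(p-1)}$ exactly when $0<\mu<\Lambda_0$; also $\|u\|_\lambda\ge S_{p+1}^{-(p+1)/(p-1)}$ on $\mathbf{N}_\mu$. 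This is the radial counterpart of the facts listed in Section 2 (cf. Lemma \ref{g7}).

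For $w_{r,\mu}^{(2)}$ I would minimize $I_\mu$ on $H_r^1(\mathbb{R}^3)$. First, $I_\mu$ is coercive and bounded below on $H^1(\mathbb{R}^3)$ for $1<p<2$: interpolating $\|u\|_{p+1}^{p+1}\le\|u\|_2^{4-2p}\|u\|_3^{3(p-1)}$, using $\|u\|_3^3\le C\|\nabla u\|_2\big(\int\phi_u u^2\,dx\big)^{1/2}$, and Young's inequality (the weights $2-p,\tfrac{p-1}{2},\tfrac{p-1}{2}$ summing to $1$) yields $\|u\|_{p+1}^{p+1}\le\varepsilon\big(\|u\|_{H^1}^2+\int\phi_u u^2\,dx\big)+C_\varepsilon$. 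Second, with $U$ the positive solution of $-\Delta U+\lambda U=|U|^{p-1}U$, one has $I_\mu(tU)=\tfrac{t^2}{2}\|U\|_\lambda^2+\tfrac{\mu t^4}{4}\int\phi_U U^2\,dx-\tfrac{t^{p+1}}{p+1}\|U\|_{p+1}^{p+1}$, and a direct estimate via Lemma \ref{L2-3}(ii) shows $0<\mu<\Lambda_0$ forces $I_\mu(t_0U)<0$ for a suitable $t_0>0$; hence, by the fibering picture, $\beta_{r,\mu}^{(2)}=\inf_{\mathbf{N}_\mu^{+}\cap H_r^1}I_\mu=\inf_{H_r^1}I_\mu<0$. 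A minimizing sequence $\{u_n\}$ for $\inf_{H_r^1}I_\mu$ is bounded by coercivity, so up to a subsequence $u_n\rightharpoonup\bar u$ in $H_r^1$ and $u_n\to\bar u$ in $L^{12/5}\cap L^{p+1}$; then $\|u_n^2-\bar u^2\|_{6/5}\le\|u_n-\bar u\|_{12/5}\|u_n+\bar u\|_{12/5}\to0$ gives $\int\phi_{u_n}u_n^2\to\int\phi_{\bar u}\bar u^2$, while $\|u_n\|_{p+1}\to\|\bar u\|_{p+1}$, so weak lower semicontinuity of $\|\cdot\|_\lambda^2$ yields $I_\mu(\bar u)\le\liminf I_\mu(u_n)=\inf_{H_r^1}I_\mu<0=I_\mu(0)$. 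Hence $\bar u\neq0$ is a global minimizer (and $u_n\to\bar u$ strongly), therefore a critical point of $I_\mu$ on $H^1$ by the principle of symmetric criticality, and $\bar u\in\mathbf{N}_\mu^{+}$ since $\mathbf{N}_\mu^{-}$ and $\mathbf{N}_\mu^{0}$ carry only positive energy. Replacing $\bar u$ by $|\bar u|$ (which alters neither $\|\cdot\|_\lambda$, nor $\int\phi_{\cdot}(\cdot)^2$, nor $\|\cdot\|_{p+1}$) and using the strong maximum principle, $w_{r,\mu}^{(2)}:=|\bar u|>0$.

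For $w_{r,\mu}^{(1)}$ I would minimize $I_\mu$ on $\mathbf{N}_\mu^{-}\cap H_r^1(\mathbb{R}^3)$. By the radial version of the Section 2 estimates, $0<d_0<\beta_{r,\mu}^{(1)}<\tfrac{A(p)(p-1)}{2(p+1)}\big(\tfrac{2S_{p+1}^{p+1}}{3-p}\big)^{2/(p-1)}$, so $\beta_{r,\mu}^{(1)}>0$ and, crucially, this level lies strictly below $\inf_{\mathbf{N}_\mu^{0}}I_\mu$; since $\mathbf{N}_\mu^{-}$ is an open subset of the $C^1$-manifold $\mathbf{N}_\mu\setminus\mathbf{N}_\mu^{0}$, Ekeland's variational principle produces a minimizing sequence that is also a Palais--Smale sequence for $I_\mu|_{H_r^1}$, hence (symmetric criticality) for $I_\mu$ on $H^1$. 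It is bounded by the coercivity of $I_\mu$ on $\mathbf{N}_\mu^{-}$, and the $L^{p+1}$-compactness together with the control of the nonlocal term (as above) upgrades $u_n\rightharpoonup\bar u$ to strong convergence in $H_r^1$, while $\|u\|_\lambda\ge S_{p+1}^{-(p+1)/(p-1)}$ on $\mathbf{N}_\mu$ and $\|u_n\|_{p+1}\to\|\bar u\|_{p+1}$ forbid $\bar u=0$. Thus $\bar u\in\mathbf{N}_\mu^{-}\cap H_r^1$ with $I_\mu(\bar u)=\beta_{r,\mu}^{(1)}$, $\bar u$ is a nontrivial critical point, and $w_{r,\mu}^{(1)}:=|\bar u|>0$ as before. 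Since $I_\mu(w_{r,\mu}^{(1)})=\beta_{r,\mu}^{(1)}>0>\beta_{r,\mu}^{(2)}=I_\mu(w_{r,\mu}^{(2)})$, the two solutions are different.

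The step I expect to be the main obstacle is the precompactness of the minimizing sequences---ruling out vanishing at the positive-energy level $\beta_{r,\mu}^{(1)}$ and keeping the weak limit away from both $\mathbf{N}_\mu^{0}$ and $0$. This is exactly where passing to $H_r^1(\mathbb{R}^3)$ is decisive (the embedding into $L^{p+1}$ is compact), where $0<\mu<\Lambda_0$ is used quantitatively (it puts $\beta_{r,\mu}^{(1)}$ strictly below $\inf_{\mathbf{N}_\mu^{0}}I_\mu$ and makes $\inf_{H_r^1}I_\mu<0$), and where the coercivity of $I_\mu$ on $\mathbf{N}_\mu^{-}$ and on $H_r^1(\mathbb{R}^3)$ is indispensable.
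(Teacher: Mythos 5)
Your overall architecture, namely obtaining $w_{r,\mu }^{\left( 2\right) }$ as a global minimizer of $I_{\mu }$ on $H_{r}^{1}(\mathbb{R}^{3})$ at a negative level, obtaining $w_{r,\mu }^{\left( 1\right) }$ as a minimizer on $\mathbf{N}_{\mu }^{-}\cap H_{r}^{1}(\mathbb{R}^{3})$ at a positive level strictly below the energy threshold of $\mathbf{N}_{\mu }^{0}$, using Ekeland, symmetric criticality and the compact embedding $H_{r}^{1}\hookrightarrow L^{p+1}$ for compactness, and separating the two solutions by the sign of their energies, is exactly the route of the sources the paper itself invokes here (the paper offers no proof of Theorem \ref{T5} beyond the citation of \cite{SWF,R1}). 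Most of your steps are sound: the fibering analysis, the identity for $I_{\mu }$ on $\mathbf{N}_{\mu }$, your lower bound for $I_{\mu }$ on $\mathbf{N}_{\mu }^{-}$ (which already gives boundedness of the minimizing sequence on $\mathbf{N}_{\mu }^{-}$ without any global coercivity), and the compactness and positivity arguments.

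The genuine gap is the coercivity step used for the global minimization. First, the inequality you claim, $\left\Vert u\right\Vert _{p+1}^{p+1}\leq \varepsilon \left( \left\Vert u\right\Vert _{H^{1}}^{2}+\int \phi _{u}u^{2}dx\right) +C_{\varepsilon }$, does not follow from interpolation plus the Coulomb--Sobolev bound plus Young: the quantity you estimate, $\left( \left\Vert u\right\Vert _{2}^{2}\right) ^{2-p}\left( \left\Vert \nabla u\right\Vert _{2}^{2}\right) ^{\left( p-1\right) /2}\left( \int \phi _{u}u^{2}dx\right) ^{\left( p-1\right) /2}$, is jointly $1$-homogeneous in the triple $\left( \left\Vert u\right\Vert _{2}^{2},\left\Vert \nabla u\right\Vert _{2}^{2},\int \phi _{u}u^{2}dx\right) $, so weighted AM--GM only yields $\left\Vert u\right\Vert _{p+1}^{p+1}\leq C_{\delta }\left\Vert u\right\Vert _{2}^{2}+\delta \left\Vert \nabla u\right\Vert _{2}^{2}+\delta \int \phi _{u}u^{2}dx$ with $C_{\delta }\rightarrow \infty $ as $\delta \rightarrow 0$ and with no additive constant; in your regime $\mu <\Lambda _{0}$ (small $\mu $) the term $C_{\delta }\left\Vert u\right\Vert _{2}^{2}$ cannot be absorbed. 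Second, and decisively, the conclusion as you state it (coercivity and boundedness below on all of $H^{1}(\mathbb{R}^{3})$) is false in this regime: once $\inf I_{\mu }<0$, which you yourself establish for $\mu <\Lambda _{0}$, choose a compactly supported $w$ with $I_{\mu }(w)<0$ and set $u_{N}=\sum_{j=1}^{N}w(\cdot -x_{j})$ with mutual distances at least $d_{N}$; the local terms are additive while the Coulomb cross terms are at most $CN^{2}\left\Vert w\right\Vert _{2}^{4}/d_{N}$, so for $d_{N}$ large one gets $I_{\mu }\left( u_{N}\right) \leq \frac{N}{2}I_{\mu }(w)\rightarrow -\infty $ although $\left\Vert u_{N}\right\Vert _{H^{1}}\rightarrow \infty $. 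Hence coercivity holds only on the radial subspace, and no argument blind to radial symmetry (as yours is) can prove it; the correct proof is the genuinely radial argument of Ruiz's Theorem 4.3 (Strauss decay together with the Newton-type lower bound $\phi _{u}(x)\geq \left\vert x\right\vert ^{-1}\int_{\left\vert y\right\vert \leq \left\vert x\right\vert }u^{2}dy$ for radial $u$), which is precisely the nontrivial ingredient supplied by \cite{R1,SWF} and leaned on again by the paper in Proposition \ref{p6}. If you replace your coercivity paragraph by that radial argument (or an explicit appeal to it), the rest of your proof goes through.
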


\section{Positive vectorial solutions}

First, we define the Nehari manifold of functional $J$ as follows.
\begin{equation*}
\mathbf{M}:=\{\left( u,v\right) \in H\backslash \{\left( 0,0\right)
\}:F\left( u,v\right) :=\left\langle J^{\prime }\left( u,v\right) ,\left(
u,v\right) \right\rangle =0\},
\end{equation*}%
where%
\begin{eqnarray*}
F\left( u,v\right) &=&\left\Vert \left( u,v\right) \right\Vert
_{H}^{2}+\int_{\mathbb{R}^{3}}\mu _{11}\phi _{_{u}}u^{2}+\mu _{22}\phi
_{_{v}}v^{2}-2\mu _{12}\phi _{_{v}}u^{2}dx \\
&&-\frac{1}{2\pi }\int_{\mathbb{R}^{3}}\int_{0}^{2\pi }\left\vert
u+e^{i\theta }v\right\vert ^{p+1}d\theta dx\\
&=&\int_{\mathbb{R}^{3}}|\nabla u|^{2}+\lambda u^{2}dx+\frac{1}{2}%
\int_{\mathbb{R}^{3}}|\nabla v|^{2}+\lambda v^{2}dx \\
&&+\int_{\mathbb{R}^{3}}\mu _{11}\phi _{_{u}}u^{2}+\mu _{22}\phi
_{_{v}}v^{2}-2\mu _{12}\phi _{_{v}}u^{2}dx \\
&&-\frac{1}{2\pi }\int_{\mathbb{R}^{3}}\int_{0}^{2\pi
}\left( u^{2}+2uv\cos \theta +v^{2}\right) ^{\frac{p+1}{2}}d\theta dx.
\end{eqnarray*}%
Then $u\in \mathbf{M}$ if and only if $\left\langle J^{\prime }\left(
u,v\right) ,\left( u,v\right) \right\rangle =0.$ It follows the Sobolev and
Young inequalities that%
\begin{eqnarray*}
\left\Vert \left( u,v\right) \right\Vert _{H}^{2}-2\mu _{12}\widehat{C}%
_{0}\left\Vert \left( u,v\right) \right\Vert _{H}^{4} &\leq &\left\Vert
\left( u,v\right) \right\Vert _{H}^{2}+\int_{\mathbb{R}^{3}}\mu _{11}\phi
_{_{u}}u^{2}+\mu _{22}\phi _{_{v}}v^{2}-2\mu _{12}\phi _{_{v}}u^{2}dx \\
&=&\frac{1}{2\pi }\int_{\mathbb{R}^{3}}\int_{0}^{2\pi }\left\vert
u+e^{i\theta }v\right\vert ^{p+1}d\theta dx \\
&\leq &C_{0}\left\Vert \left( u,v\right) \right\Vert _{H}^{p+1}\text{ for
all }u\in \mathbf{M}.
\end{eqnarray*}%
Since $1<p<3,$ there exists $C_{\mu _{12}}>0$ with $C_{\mu _{12}}\rightarrow
0$ as $\mu _{12}\rightarrow \infty $ such that%
\begin{equation}
\left\Vert \left( u,v\right) \right\Vert _{H}\geq C_{\mu _{12}}\text{ for
all }u\in \mathbf{M}.  \label{2-2}
\end{equation}

The Nehari manifold $\mathbf{M}$ is closely linked to the behavior of the
function of the form $h_{\left( u,v\right) }:t\rightarrow J\left(
tu,tv\right) $ for $t>0.$ For $\left( u,v\right) \in H,$ we find%
\begin{eqnarray*}
h_{\left( u,v\right) }\left( t\right) &=&\frac{t^{2}}{2}\left\Vert \left(
u,v\right) \right\Vert _{H}^{2}+\frac{t^{4}}{4}\int_{\mathbb{R}^{3}}\mu
_{11}\phi _{_{u}}u^{2}+\mu _{22}\phi _{_{v}}v^{2}-2\mu _{12}\phi
_{_{v}}u^{2}dx \\
&&-\frac{t^{p+1}}{2\pi \left( p+1\right) }\int_{\mathbb{R}%
^{3}}\int_{0}^{2\pi }\left\vert u+e^{i\theta }v\right\vert ^{p+1}d\theta dx,
\\
h_{\left( u,v\right) }^{\prime }\left( t\right) &=&t\left\Vert \left(
u,v\right) \right\Vert _{H}^{2}+t^{3}\int_{\mathbb{R}^{3}}\mu _{11}\phi
_{_{u}}u^{2}+\mu _{22}\phi _{_{v}}v^{2}-2\mu _{12}\phi _{_{v}}u^{2}dx \\
&&-\frac{t^{p}}{2\pi }\int_{\mathbb{R}^{3}}\int_{0}^{2\pi }\left\vert
u+e^{i\theta }v\right\vert ^{p+1}d\theta dx, \\
h_{\left( u,v\right) }^{\prime \prime }\left( t\right) &=&\left\Vert \left(
u,v\right) \right\Vert _{H}^{2}+3t^{2}\int_{\mathbb{R}^{3}}\mu _{11}\phi
_{_{u}}u^{2}+\mu _{22}\phi _{_{v}}v^{2}-2\mu _{12}\phi _{_{v}}u^{2}dx \\
&&-\frac{pt^{p-1}}{2\pi }\int_{\mathbb{R}^{3}}\int_{0}^{2\pi }\left\vert
u+e^{i\theta }v\right\vert ^{p+1}d\theta dx.
\end{eqnarray*}%
As a direct consequence, we have
\begin{eqnarray*}
th_{\left( u,v\right) }^{\prime }\left( t\right) &=&\left\Vert \left(
tu,tv\right) \right\Vert _{H}^{2}+\int_{\mathbb{R}^{3}}\mu _{11}\phi
_{_{tu}}t^{2}u^{2}+\mu _{22}\phi _{_{tv}}t^{2}v^{2}-2\mu _{12}\phi
_{_{tv}}t^{2}u^{2}dx \\
&&-\frac{1}{2\pi }\int_{\mathbb{R}^{3}}\int_{0}^{2\pi }\left\vert
tu+e^{i\theta }tv\right\vert ^{p+1}d\theta dx,
\end{eqnarray*}%
and so, for $\left( u,v\right) \in H\backslash \left\{ \left( 0,0\right)
\right\} $ and $t>0,$ $h_{\left( u,v\right) }^{\prime }\left( t\right) =0$
holds if and only if $\left( tu,tv\right) \in \mathbf{M}$. In particular, $%
h_{\left( u,v\right) }^{\prime }\left( 1\right) =0$  if and only if $%
\left( u,v\right) \in \mathbf{M}.$ It is then natural to split $\mathbf{M}$
into three parts corresponding to the local minima, local maxima and points
of inflection. Following \cite{T}, we define
\begin{eqnarray*}
\mathbf{M}^{+} &=&\{u\in \mathbf{M}:h_{\left( u,v\right) }^{\prime \prime
}\left( 1\right) >0\}, \\
\mathbf{M}^{0} &=&\{u\in \mathbf{M}:h_{\left( u,v\right) }^{\prime \prime
}\left( 1\right) =0\}, \\
\mathbf{M}^{-} &=&\{u\in \mathbf{M}:h_{\left( u,v\right) }^{\prime \prime
}\left( 1\right) <0\}.
\end{eqnarray*}

\begin{lemma}
\label{g2}Suppose that $\left( u_{0},v_{0}\right) $ is a local minimizer for
$J$ on $\mathbf{M}$ and $\left( u_{0},v_{0}\right) \notin \mathbf{M}^{0}.$
Then $J^{\prime }\left( u_{0},v_{0}\right) =0$ in $H^{-1}.$
\end{lemma}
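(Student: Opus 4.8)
The plan is to run the classical constrained-critical-point (Lagrange multiplier) argument on the Nehari manifold $\mathbf{M}=\{(u,v)\in H\backslash\{(0,0)\}:F(u,v)=0\}$, exploiting the hypothesis $(u_0,v_0)\notin\mathbf{M}^0$ both to guarantee that $\mathbf{M}$ is a $C^1$-manifold near $(u_0,v_0)$ and to force the resulting multiplier to vanish.

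First I would record the two elementary identities linking $F$ to the fibering map $h_{(u,v)}(t)=J(tu,tv)$. Since $h'_{(u,v)}(t)=\langle J'(tu,tv),(u,v)\rangle$, we have $t\,h'_{(u,v)}(t)=\langle J'(tu,tv),(tu,tv)\rangle=F(tu,tv)$. Differentiating the identity $F(tu,tv)=t\,h'_{(u,v)}(t)$ in $t$ and evaluating at $t=1$ gives
\[
\langle F'(u,v),(u,v)\rangle=h'_{(u,v)}(1)+h''_{(u,v)}(1).
\]
For $(u,v)\in\mathbf{M}$ one has $h'_{(u,v)}(1)=F(u,v)=0$, hence $\langle F'(u,v),(u,v)\rangle=h''_{(u,v)}(1)$. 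Applying this at $(u_0,v_0)$ and using $(u_0,v_0)\notin\mathbf{M}^0$ yields $\langle F'(u_0,v_0),(u_0,v_0)\rangle=h''_{(u_0,v_0)}(1)\neq0$; in particular $F'(u_0,v_0)\neq0$ in $H^{-1}$, so $\mathbf{M}$ is locally a $C^1$-hypersurface near $(u_0,v_0)$.

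Next I would invoke the Lagrange multiplier theorem: since $(u_0,v_0)$ is a local minimizer of $J$ restricted to $\mathbf{M}=\{F=0\}$ and $F'(u_0,v_0)\neq0$, there exists $\nu\in\mathbb{R}$ with $J'(u_0,v_0)=\nu F'(u_0,v_0)$ in $H^{-1}$. Pairing this identity with $(u_0,v_0)$ and using $(u_0,v_0)\in\mathbf{M}$, i.e. $\langle J'(u_0,v_0),(u_0,v_0)\rangle=F(u_0,v_0)=0$, gives
\[
0=\nu\,\langle F'(u_0,v_0),(u_0,v_0)\rangle=\nu\,h''_{(u_0,v_0)}(1).
\]
As the last factor is nonzero, we conclude $\nu=0$ and hence $J'(u_0,v_0)=0$ in $H^{-1}$.

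The remaining points are routine: that $J$ and $F$ are of class $C^1$ on $H$ (which follows from the regularity of the Poisson term $\phi_w$ and of the angular-averaged nonlinearity, as in the cited references), and the legitimacy of differentiating $F(tu,tv)$ in $t$ to obtain the displayed formula for $\langle F'(u,v),(u,v)\rangle$. I do not expect a genuine obstacle here: the whole content is that the assumption $(u_0,v_0)\notin\mathbf{M}^0$ is exactly the condition that simultaneously makes the constraint regular and annihilates the multiplier.
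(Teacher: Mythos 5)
Your proof is correct and is precisely the Lagrange-multiplier argument of Brown--Zhang \cite[Theorem 2.3]{BZ} to which the paper defers (the paper omits the proof and simply cites that reference); in particular your identity $\langle F'(u_0,v_0),(u_0,v_0)\rangle=h''_{(u_0,v_0)}(1)$ is the key point that both regularizes the constraint and kills the multiplier. No gaps.
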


\begin{proof}
The proof of Lemma \ref{g2} is essentially the same as that in Brown-Zhang \cite[%
Theorem 2.3]{BZ} (or see Binding-Dr\'{a}bek-Huang \cite{BDH}) and is subsequently omitted here.
\end{proof}

For each $\left( u,v\right) \in \mathbf{M},$ we find that
\begin{eqnarray}
h_{\left( u,v\right) }^{\prime \prime }\left( 1\right) &=&\left\Vert \left(
u,v\right) \right\Vert _{H}^{2}+3\int_{\mathbb{R}^{3}}\mu _{11}\phi
_{_{u}}u^{2}+\mu _{22}\phi _{_{v}}v^{2}-2\mu _{12}\phi _{_{v}}u^{2}dx-\frac{p%
}{2\pi }\int_{\mathbb{R}^{3}}\int_{0}^{2\pi }\left\vert tu+e^{i\theta
}tv\right\vert ^{p+1}d\theta dx  \notag \\
&=&-\left( p-1\right) \left\Vert \left( u,v\right) \right\Vert
_{H}^{2}+\left( 3-p\right) \int_{\mathbb{R}^{3}}\mu _{11}\phi
_{_{u}}u^{2}+\mu _{22}\phi _{_{v}}v^{2}-2\mu _{12}\phi _{_{v}}u^{2}dx
\label{2-6-1} \\
&=&-2\left\Vert \left( u,v\right) \right\Vert _{H}^{2}+\frac{3-p}{2\pi }%
\int_{\mathbb{R}^{3}}\int_{0}^{2\pi }\left\vert tu+e^{i\theta }tv\right\vert
^{p+1}d\theta dx.  \label{2-6-2}
\end{eqnarray}%
For each $\left( u,v\right) \in \mathbf{M}^{-}$, using $\left( \ref{2-2}%
\right) $ and $(\ref{2-6-2})$ gives
\begin{eqnarray*}
J(u,v) &=&\frac{1}{4}\left\Vert \left( u,v\right) \right\Vert _{H}^{2}-\frac{%
3-p}{8\pi \left( p+1\right) }\int_{\mathbb{R}^{3}}\int_{0}^{2\pi }\left\vert
tu+e^{i\theta }tv\right\vert ^{p+1}d\theta dx \\
&>&\frac{p-1}{4\left( p+1\right) }\left\Vert \left( u,v\right) \right\Vert
_{H}^{2}\geq \frac{p-1}{4\left( p+1\right) }C_{\mu _{12}}^{2}>0,
\end{eqnarray*}%
and for each $\left( u,v\right) \in \mathbf{M}^{+},$%
\begin{eqnarray*}
J(u,v) &=&\frac{p-1}{2\left( p+1\right) }\left\Vert \left( u,v\right)
\right\Vert _{H}^{2}-\left( \frac{3-p}{4\left( p+1\right) }\right) \int_{%
\mathbb{R}^{3}}\mu _{11}\phi _{_{u}}u^{2}+\mu _{22}\phi _{_{v}}v^{2}-2\mu
_{12}\phi _{_{v}}u^{2}dx \\
&<&\frac{p-2}{4p}\left\Vert \left( u,v\right) \right\Vert _{H}^{2}.
\end{eqnarray*}%
Hence, we obtain the following result.

\begin{lemma}
\label{g5}The energy functional $J$ is coercive and bounded below on $%
\mathbf{M}^{-}.$ Furthermore, for all $u\in \mathbf{M}^{-}$,
\begin{equation*}
J(u,v)>\frac{p-1}{4\left( p+1\right) }C_{\mu _{12}}^{2}>0.
\end{equation*}
\end{lemma}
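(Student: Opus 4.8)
The plan is to combine the two algebraic identities for $J$ on the Nehari manifold that have already been recorded and then use the defining sign condition of $\mathbf M^{-}$. First I would note that for $\left(u,v\right)\in\mathbf M$ the constraint $F(u,v)=0$ lets us eliminate the term $\int_{\mathbb R^{3}}\left(\mu_{11}\phi_{u}u^{2}+\mu_{22}\phi_{v}v^{2}-2\mu_{12}\phi_{v}u^{2}\right)dx$ from the expression for $J(u,v)$, producing
\[
J(u,v)=\frac14\left\Vert\left(u,v\right)\right\Vert_{H}^{2}-\frac{3-p}{8\pi\left(p+1\right)}\int_{\mathbb R^{3}}\int_{0}^{2\pi}\left\vert u+e^{i\theta}v\right\vert^{p+1}d\theta\,dx ,
\]
which is precisely the representation used just before the statement of Lemma \ref{g5}. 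The key point is to rewrite the coefficient as $\dfrac{3-p}{8\pi(p+1)}=\dfrac{1}{4(p+1)}\cdot\dfrac{3-p}{2\pi}$, so that the nonlinear integral appears in the same combination $\dfrac{3-p}{2\pi}\int\!\int|u+e^{i\theta}v|^{p+1}$ as in identity $(\ref{2-6-2})$.

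Next I would invoke that $\left(u,v\right)\in\mathbf M^{-}$ means $h_{(u,v)}''(1)<0$, and apply $(\ref{2-6-2})$, namely $h_{(u,v)}''(1)=-2\left\Vert\left(u,v\right)\right\Vert_{H}^{2}+\dfrac{3-p}{2\pi}\int_{\mathbb R^{3}}\int_{0}^{2\pi}|u+e^{i\theta}v|^{p+1}d\theta\,dx$. This gives, on $\mathbf M^{-}$,
\[
\frac{3-p}{2\pi}\int_{\mathbb R^{3}}\int_{0}^{2\pi}\left\vert u+e^{i\theta}v\right\vert^{p+1}d\theta\,dx<2\left\Vert\left(u,v\right)\right\Vert_{H}^{2}.
\]
Substituting this upper bound into the displayed formula for $J$ (it enters with the negative coefficient $-\tfrac{1}{4(p+1)}$) yields
\[
J(u,v)>\frac14\left\Vert\left(u,v\right)\right\Vert_{H}^{2}-\frac{1}{4\left(p+1\right)}\cdot 2\left\Vert\left(u,v\right)\right\Vert_{H}^{2}=\frac{p-1}{4\left(p+1\right)}\left\Vert\left(u,v\right)\right\Vert_{H}^{2}.
\]
Since $1<p<3$, the constant $\dfrac{p-1}{4(p+1)}$ is strictly positive, so the right-hand side is $\ge 0$ and tends to $+\infty$ as $\left\Vert\left(u,v\right)\right\Vert_{H}\to\infty$; hence $J$ is bounded below and coercive on $\mathbf M^{-}$. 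Finally, combining with the uniform lower bound $(\ref{2-2})$, $\left\Vert\left(u,v\right)\right\Vert_{H}\ge C_{\mu_{12}}>0$ for every $\left(u,v\right)\in\mathbf M\supseteq\mathbf M^{-}$, I get $J(u,v)>\dfrac{p-1}{4(p+1)}C_{\mu_{12}}^{2}>0$, which is the asserted estimate.

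As for difficulties, there is essentially no genuine obstacle: every ingredient — the inequality $(\ref{2-2})$, the two quadratic identities $(\ref{2-6-1})$–$(\ref{2-6-2})$, and the Nehari identity $F(u,v)=0$ — has already been established, and the argument is a single substitution. The only things to watch are the bookkeeping of the constants $\tfrac14$, $\tfrac{1}{2\pi(p+1)}$ and $\tfrac{3-p}{2\pi}$ so that the cancellation indeed produces the clean factor $\tfrac{p-1}{4(p+1)}$, and the observation that the strict inequality $h_{(u,v)}''(1)<0$ propagates to a strict lower bound for $J$, so that positivity (not merely nonnegativity) of $J$ on $\mathbf M^{-}$ is retained.
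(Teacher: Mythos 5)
Your proof is correct and is essentially identical to the paper's argument: both eliminate the Poisson term via the Nehari constraint to get $J(u,v)=\frac14\Vert(u,v)\Vert_H^2-\frac{3-p}{8\pi(p+1)}\int\int|u+e^{i\theta}v|^{p+1}$, bound the nonlinear integral using $h''_{(u,v)}(1)<0$ and $(\ref{2-6-2})$ to obtain $J>\frac{p-1}{4(p+1)}\Vert(u,v)\Vert_H^2$, and conclude with $(\ref{2-2})$. No issues.
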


For $0<\mu _{11}<\Lambda _{0},$ let $\left( u,v\right) \in \mathbf{M}$ with $%
J\left( u,v\right) <\frac{A\left( p\right) \left( p-1\right) }{2\left(
p+1\right) }\left( \frac{2S_{p+1}^{p+1}}{3-p}\right) ^{2/(p-1)}.$ Since $\mu
_{11}\leq \mu _{22},$ by Lemma \ref{L2-3}, we deduce that%
\begin{eqnarray*}
\frac{A\left( p\right) \left( p-1\right) }{2\left( p+1\right) }\left( \frac{%
2S_{p+1}^{p+1}}{3-p}\right) ^{2/(p-1)} &>&J(u,v) \\
&=&\frac{p-1}{2\left( p+1\right) }\left\Vert \left( u,v\right) \right\Vert
_{H}^{2}-\frac{3-p}{4\left( p+1\right) }\int_{\mathbb{R}^{3}}\mu _{11}\phi
_{_{u}}u^{2}+\mu _{22}\phi _{_{v}}v^{2}-2\mu _{12}\phi _{_{v}}u^{2}dx \\
&\geq &\frac{p-1}{2\left( p+1\right) }\left\Vert \left( u,v\right)
\right\Vert _{H}^{2}-\frac{3-p}{4\left( p+1\right) }\int_{\mathbb{R}^{3}}\mu
_{11}\phi _{_{u}}u^{2}+\mu _{22}\phi _{_{v}}v^{2}dx \\
&\geq &\frac{p-1}{2\left( p+1\right) }\left\Vert \left( u,v\right)
\right\Vert _{H}^{2}-\frac{\mu _{22}(3-p)}{4\left( p+1\right) }\frac{16}{3%
\sqrt{3}\pi \lambda ^{\frac{3}{2}}}\left\Vert \left( u,v\right) \right\Vert
_{H}^{4} \\
&=&\frac{p-1}{2\left( p+1\right) }\left\Vert \left( u,v\right) \right\Vert
_{H}^{2}-\frac{4\mu _{22}(3-p)}{3\sqrt{3}\left( p+1\right) \pi \lambda ^{%
\frac{3}{2}}}\left\Vert \left( u,v\right) \right\Vert _{H}^{4}.
\end{eqnarray*}%
Since the function
\begin{equation*}
q\left( x\right) =\frac{p-1}{2\left( p+1\right) }x^{2}-\frac{4\mu _{22}(3-p)%
}{3\sqrt{3}\left( p+1\right) \pi \lambda ^{\frac{3}{2}}}x^{4}
\end{equation*}%
has the maximum at $x_{0}=\left( \frac{3\sqrt{3}\left( p-1\right) \pi
\lambda ^{\frac{3}{2}}}{16\mu _{22}(3-p)}\right) ^{1/2},$ we have%
\begin{equation*}
\max_{x\geq 0}q\left( x\right) =q\left( x_{0}\right) =\frac{3\sqrt{3}\left(
p-1\right) ^{2}\pi \lambda ^{\frac{3}{2}}}{64\mu _{22}(3-p)\left( p+1\right)
}\geq \frac{A\left( p\right) \left( p-1\right) }{2\left( p+1\right) }\left(
\frac{2S_{p+1}^{p+1}}{3-p}\right) ^{2/(p-1)}.
\end{equation*}%
Thus,
\begin{equation}
\mathbf{M}\left[ \frac{A\left( p\right) \left( p-1\right) }{2\left(
p+1\right) }\left( \frac{2S_{p+1}^{p+1}}{3-p}\right) ^{2/(p-1)}\right] =%
\mathbf{M}^{(1)}\cup \mathbf{M}^{(2)},  \label{4-4}
\end{equation}%
where%
\begin{equation*}
\mathbf{M}\left[ \frac{A\left( p\right) \left( p-1\right) }{2\left(
p+1\right) }\left( \frac{2S_{p+1}^{p+1}}{3-p}\right) ^{2/(p-1)}\right]
:=\left\{ u\in \mathbf{M}:J\left( u,v\right) <\frac{A\left( p\right) \left(
p-1\right) }{2\left( p+1\right) }\left( \frac{2S_{p+1}^{p+1}}{3-p}\right)
^{2/(p-1)}\right\} ,
\end{equation*}%
\begin{equation*}
\mathbf{M}^{(1)}:=\left\{ u\in \mathbf{M}\left[ \frac{A\left( p\right)
\left( p-1\right) }{2\left( p+1\right) }\left( \frac{2S_{p+1}^{p+1}}{3-p}%
\right) ^{2/(p-1)}\right] :\left\Vert \left( u,v\right) \right\Vert
_{H}<\left( \frac{3\sqrt{3}\left( p-1\right) \pi \lambda ^{\frac{3}{2}}}{%
16\mu _{22}(3-p)}\right) ^{1/2}\right\},
\end{equation*}%
and
\begin{equation*}
\mathbf{M}^{(2)}:=\left\{ u\in \mathbf{M}\left[ \frac{A\left( p\right)
\left( p-1\right) }{2\left( p+1\right) }\left( \frac{2S_{p+1}^{p+1}}{3-p}%
\right) ^{2/(p-1)}\right] :\left\Vert \left( u,v\right) \right\Vert
_{H}>\left( \frac{3\sqrt{3}\left( p-1\right) \pi \lambda ^{\frac{3}{2}}}{%
16\mu _{22}(3-p)}\right) ^{1/2}\right\} .
\end{equation*}%
By $\left( \ref{2-6-1}\right) $ and Lemma \ref{L2-3}, it follows from the Sobolev
inequality that
\begin{eqnarray*}
h_{\left( u,v\right) }^{\prime \prime }\left( 1\right) &=&-\left( p-1\right)
\left\Vert \left( u,v\right) \right\Vert _{H}^{2}+\left( 3-p\right) \int_{%
\mathbb{R}^{3}}\mu _{11}\phi _{_{u}}u^{2}+\mu _{22}\phi _{_{v}}v^{2}-2\mu
_{12}\phi _{_{v}}u^{2}dx \\
&\leq &\left\Vert \left( u,v\right) \right\Vert _{H}^{2}\left[ \frac{16\mu
_{22}(3-p)}{3\sqrt{3}\pi \lambda ^{\frac{3}{2}}}\left\Vert \left( u,v\right)
\right\Vert _{H}^{2}-\left( p-1\right) \right] \\
&<&\left\Vert \left( u,v\right) \right\Vert _{H}^{2}\left( \frac{16\mu
_{22}(3-p)}{3\sqrt{3}\pi \lambda ^{\frac{3}{2}}}\frac{3\sqrt{3}\left(
p-1\right) \pi \lambda ^{\frac{3}{2}}}{16\mu _{22}(3-p)}-\left( p-1\right)
\right) \\
&=&0\text{ for all }u\in \mathbf{M}^{(1)}.
\end{eqnarray*}%
Using $\left( \ref{2-6-2}\right),$ we deduce that
\begin{eqnarray*}
&&\frac{1}{4}\left\Vert \left( u,v\right) \right\Vert _{H}^{2}-\frac{3-p}{%
8\pi \left( p+1\right) }\int_{\mathbb{R}^{3}}\int_{0}^{2\pi }\left\vert
tu+e^{i\theta }tv\right\vert ^{p+1}d\theta dx \\
&=&J\left( u,v\right) <\frac{3\sqrt{3}\left( p-1\right) ^{2}\pi \lambda ^{%
\frac{3}{2}}}{64\mu _{22}(3-p)\left( p+1\right) } \\
&<&\frac{p-1}{4\left( p+1\right) }\left\Vert \left( u,v\right) \right\Vert
_{H}^{2}\text{ for all }u\in \mathbf{M}^{(2)},
\end{eqnarray*}%
which implies that if $u\in \mathbf{M}^{(2)},$ then%
\begin{equation*}
h_{\left( u,v\right) }^{\prime \prime }\left( 1\right) =-2\left\Vert \left(
u,v\right) \right\Vert _{H}^{2}+\frac{3-p}{2\pi }\int_{\mathbb{R}%
^{3}}\int_{0}^{2\pi }\left\vert tu+e^{i\theta }tv\right\vert ^{p+1}d\theta
d>0.
\end{equation*}%
Hence, we obtain the following results.

\begin{lemma}
\label{g7}Suppose that $1<p<3$ and $\mu _{ij}>0.$ If $0<\mu _{11}<\Lambda
_{0},$ Then $\mathbf{M}^{(1)}\subset \mathbf{M}^{-}$ and $\mathbf{M}%
^{(2)}\subset \mathbf{M}^{+}$ are $C^{1}$ sub-manifolds. Furthermore, each
local minimizer of the functional $J$ in the sub-manifolds $\mathbf{M}^{(1)}$
and $\mathbf{M}^{(2)}$ is a critical point of $J$ in $H.$
\end{lemma}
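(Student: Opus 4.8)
The first claim is immediate from the two displayed estimates preceding the statement. Indeed, for $\left(u,v\right)\in\mathbf{M}^{(1)}\subset\mathbf{M}$, inserting the norm bound $\left\Vert\left(u,v\right)\right\Vert_{H}<\left(\frac{3\sqrt{3}\left(p-1\right)\pi\lambda^{3/2}}{16\mu_{22}\left(3-p\right)}\right)^{1/2}$ into $\left(\ref{2-6-1}\right)$ and using Lemma \ref{L2-3} gives $h_{\left(u,v\right)}^{\prime\prime}\left(1\right)<0$, so $\left(u,v\right)\in\mathbf{M}^{-}$; for $\left(u,v\right)\in\mathbf{M}^{(2)}\subset\mathbf{M}$, combining the energy bound $J\left(u,v\right)<\frac{A\left(p\right)\left(p-1\right)}{2\left(p+1\right)}\left(\frac{2S_{p+1}^{p+1}}{3-p}\right)^{2/\left(p-1\right)}$ with the reversed norm bound and $\left(\ref{2-6-2}\right)$ gives $h_{\left(u,v\right)}^{\prime\prime}\left(1\right)>0$, so $\left(u,v\right)\in\mathbf{M}^{+}$. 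In particular $\mathbf{M}^{(1)}$ and $\mathbf{M}^{(2)}$ are both disjoint from $\mathbf{M}^{0}$.

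Next I would check that $\mathbf{M}^{(1)}$ and $\mathbf{M}^{(2)}$ are relatively open in $\mathbf{M}$. Each is obtained from $\mathbf{M}$ by imposing the strict inequality $J\left(u,v\right)<\frac{A\left(p\right)\left(p-1\right)}{2\left(p+1\right)}\left(\frac{2S_{p+1}^{p+1}}{3-p}\right)^{2/\left(p-1\right)}$, an open condition since $J$ is continuous, together with $\left\Vert\left(u,v\right)\right\Vert_{H}<x_{0}$ (respectively $>x_{0}$) for $x_{0}=\left(\frac{3\sqrt{3}\left(p-1\right)\pi\lambda^{3/2}}{16\mu_{22}\left(3-p\right)}\right)^{1/2}$, which is also open; by $\left(\ref{4-4}\right)$ these two conditions in fact partition the sublevel set. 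To promote this to the manifold statement I would invoke the implicit function theorem. Setting $F\left(u,v\right)=\left\langle J^{\prime}\left(u,v\right),\left(u,v\right)\right\rangle$ as above, one computes $\left\langle F^{\prime}\left(u,v\right),\left(u,v\right)\right\rangle=h_{\left(u,v\right)}^{\prime\prime}\left(1\right)$ for every $\left(u,v\right)\in\mathbf{M}$, so at any point of $\mathbf{M}\setminus\mathbf{M}^{0}$ the differential $F^{\prime}$ is nonzero; hence $0$ is a regular value of $F$ near that point and $\mathbf{M}$ is there a $C^{1}$ submanifold of $H$ of codimension one. Restricting to the open subsets $\mathbf{M}^{(1)},\mathbf{M}^{(2)}$, which avoid $\mathbf{M}^{0}$ by the previous step, shows they are $C^{1}$ submanifolds of $H$.

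Finally, let $\left(u_{0},v_{0}\right)$ be a local minimizer of $J$ on $\mathbf{M}^{(1)}$ (the case of $\mathbf{M}^{(2)}$ being identical). Since $\mathbf{M}^{(1)}$ is open in $\mathbf{M}$, $\left(u_{0},v_{0}\right)$ is also a local minimizer of $J$ on $\mathbf{M}$, and since $\mathbf{M}^{(1)}\subset\mathbf{M}^{-}$ we have $\left(u_{0},v_{0}\right)\notin\mathbf{M}^{0}$; Lemma \ref{g2} then yields $J^{\prime}\left(u_{0},v_{0}\right)=0$ in $H^{-1}$, i.e. $\left(u_{0},v_{0}\right)$ is a critical point of $J$ in $H$. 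I do not expect a substantive obstacle in this lemma: once the signs of $h_{\left(u,v\right)}^{\prime\prime}\left(1\right)$ on $\mathbf{M}^{(1)}$ and $\mathbf{M}^{(2)}$ are in hand (the content of the computation just above, which is where the hypotheses $\mu_{11}\leq\mu_{22}$ and $0<\mu_{11}<\Lambda_{0}$ enter), the argument is the standard Nehari-manifold passage from a constrained local extremum off $\mathbf{M}^{0}$ to an unconstrained critical point; the only thing to be careful about is verifying that each of $\mathbf{M}^{(1)},\mathbf{M}^{(2)}$ really is open in $\mathbf{M}$ and really misses $\mathbf{M}^{0}$, so that the hypotheses of Lemma \ref{g2} are met.
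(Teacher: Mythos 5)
Your proposal is correct and follows essentially the same route as the paper: the sign of $h_{\left(u,v\right)}^{\prime\prime}\left(1\right)$ on $\mathbf{M}^{(1)}$ and $\mathbf{M}^{(2)}$ is obtained exactly as in the two displayed estimates preceding the lemma (via $\left(\ref{2-6-1}\right)$, Lemma \ref{L2-3} and the norm threshold for $\mathbf{M}^{(1)}$; via $\left(\ref{2-6-2}\right)$ and the energy bound for $\mathbf{M}^{(2)}$), and the passage from a constrained local minimizer off $\mathbf{M}^{0}$ to a free critical point is Lemma \ref{g2}. The openness and implicit-function-theorem details you supply for the $C^{1}$ submanifold claim are left implicit in the paper but are the standard justification, and your identity $\left\langle F^{\prime}\left(u,v\right),\left(u,v\right)\right\rangle =h_{\left(u,v\right)}^{\prime\prime}\left(1\right)$ on $\mathbf{M}$ checks out.
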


We have the following results.

\begin{lemma}
\label{l5}Suppose that $1<p<3$ and $0<\mu _{11}\leq \mu _{22}.$ Let $\left(
u_{0},v_{0}\right) $ be a critical point of $J$ on $\mathbf{M}^{-}.$ Then we
have $J\left( u_{0},v_{0}\right) \geq \beta _{\mu _{11}}$ if either $u_{0}=0$
or $v_{0}=0.$
\end{lemma}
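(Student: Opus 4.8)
The plan is to recognize a semi-trivial element of $\mathbf{M}^{-}$ as an element of the scalar Nehari sub-manifold $\mathbf{N}_{\mu_{ii}}^{-}$ associated with Equation $\left( SP_{\mu_{ii}}\right) $, and then read off the inequality from the definition of $\beta_{\mu_{ii}}$. The one identity that drives everything is that, on each of the two semi-trivial subspaces of $H$, the functional $J$ reduces exactly to a scalar functional $I_{\mu_{ii}}$: since $\phi_{0}\equiv 0$ and $\frac{1}{2\pi}\int_{0}^{2\pi}\left\vert u+e^{i\theta}\cdot 0\right\vert^{p+1}d\theta=\left\vert u\right\vert^{p+1}$, a direct computation (using $\phi_{tu}=t^{2}\phi_{u}$) gives
\[
J\left( tu,0\right) =\frac{t^{2}}{2}\int_{\mathbb{R}^{3}}|\nabla u|^{2}+\lambda u^{2}\,dx+\frac{t^{4}\mu_{11}}{4}\int_{\mathbb{R}^{3}}\phi_{u}u^{2}\,dx-\frac{t^{p+1}}{p+1}\int_{\mathbb{R}^{3}}\left\vert u\right\vert^{p+1}\,dx=I_{\mu_{11}}\left( tu\right)
\]
for all $u\in H^{1}(\mathbb{R}^{3})$ and $t>0$, and symmetrically $J\left( 0,tv\right) =I_{\mu_{22}}\left( tv\right) $.

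Suppose first that $v_{0}=0$. The identity above says that the fibering map $t\mapsto h_{\left( u,0\right) }\left( t\right) =J\left( tu,0\right) $ of $J$ along $\left( u,0\right) $ coincides with the fibering map $t\mapsto f_{u}\left( t\right) =I_{\mu_{11}}\left( tu\right) $ of $I_{\mu_{11}}$ along $u$; in particular $h_{\left( u_{0},0\right) }^{\prime }\left( 1\right) =f_{u_{0}}^{\prime }\left( 1\right) $ and $h_{\left( u_{0},0\right) }^{\prime \prime }\left( 1\right) =f_{u_{0}}^{\prime \prime }\left( 1\right) $. Hence $\left( u_{0},0\right) \in \mathbf{M}$ forces $u_{0}\in \mathbf{N}_{\mu_{11}}$, and $\left( u_{0},0\right) \in \mathbf{M}^{-}$ forces $u_{0}\in \mathbf{N}_{\mu_{11}}^{-}$; moreover $u_{0}\neq 0$ because $\mathbf{M}$ excludes the origin. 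Therefore $J\left( u_{0},0\right) =I_{\mu_{11}}\left( u_{0}\right) \geq \inf_{u\in \mathbf{N}_{\mu_{11}}^{-}}I_{\mu_{11}}\left( u\right) =\beta_{\mu_{11}}$, which is the claim in this case.

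Now suppose $u_{0}=0$. The same reasoning, with the two coordinates interchanged, gives $v_{0}\in \mathbf{N}_{\mu_{22}}^{-}$ and hence $J\left( 0,v_{0}\right) =I_{\mu_{22}}\left( v_{0}\right) \geq \beta_{\mu_{22}}$. It remains to compare the two levels: since $\mu_{11}\leq \mu_{22}$ and $I_{\mu_{1}}\left( u\right) \leq I_{\mu_{2}}\left( u\right) $ whenever $\mu_{1}\leq \mu_{2}$ (by $\phi_{u}\geq 0$, Lemma \ref{L2-3}$\left( i\right) $), the monotonicity $\beta_{\mu_{11}}\leq \beta_{\mu_{22}}$ recorded in Section 2 yields $J\left( 0,v_{0}\right) \geq \beta_{\mu_{22}}\geq \beta_{\mu_{11}}$, and the proof is complete.

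The argument is essentially bookkeeping, and the only step requiring care is the verification of the reduction identity $J\left( tu,0\right) =I_{\mu_{11}}\left( tu\right) $ (and its counterpart) — i.e., that the mixed Poisson term $-2\mu_{12}\int_{\mathbb{R}^{3}}\phi_{v}u^{2}\,dx$ and the angular-averaged nonlinearity both degenerate in exactly the way needed for the restriction of $J$ to each semi-trivial subspace to be literally the scalar functional $I_{\mu_{ii}}$. This is precisely what lets membership in $\mathbf{M}^{-}$ pass to membership in $\mathbf{N}_{\mu_{ii}}^{-}$; the hypothesis $\mu_{11}\leq \mu_{22}$ enters only in the second case, to order the two minimax values.
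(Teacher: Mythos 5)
Your proof is correct and follows essentially the same route as the paper: on each semi-trivial subspace the functional $J$ reduces to the scalar functional $I_{\mu_{ii}}$ (the mixed Poisson term and the angular average both degenerate), so membership in $\mathbf{M}^{-}$ transfers to membership in $\mathbf{N}_{\mu_{ii}}^{-}$ and the bound follows from the definition of $\beta_{\mu_{ii}}$ together with the monotonicity $\beta_{\mu_{11}}\leq\beta_{\mu_{22}}$. The only cosmetic difference is that you treat the case $u_{0}=0$ explicitly, where the paper compresses it into a ``without loss of generality'' appeal to the same monotonicity.
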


\begin{proof}
Since $\beta _{\mu _{11}}\leq \beta _{\mu _{22}}$ for $\mu _{11}\leq \mu
_{22},$ without loss of generality, we may assume that $v_{0}=0.$ Then
\begin{equation*}
J\left( u_{0},0\right) =I_{\mu _{11}}\left( u_{0}\right) =\frac{1}{2}%
\left\Vert u_{0}\right\Vert _{H^{1}}^{2}+\frac{\mu _{11}}{4}\int_{\mathbb{R}%
^{3}}\phi _{u_{0}}u_{0}^{2}dx-\frac{1}{p+1}\int_{\mathbb{R}^{3}}\left\vert
u_{0}\right\vert ^{p+1}dx
\end{equation*}%
and%
\begin{equation*}
h_{\left( u,0\right) }^{\prime \prime }\left( 1\right) =f_{u}^{\prime \prime
}\left( 1\right) =-2\left\Vert u_{0}\right\Vert _{H^{1}}^{2}+\left(
3-p\right) \int_{\mathbb{R}^{3}}\left\vert u_{0}\right\vert ^{p+1}dx<0,
\end{equation*}%
which implies that $u_{0}\in \mathbf{N}_{\mu _{11}}^{-}.$ Thus $J\left(
u_{0},0\right) =I_{\mu _{11}}\left( u_{0}\right) \geq \beta _{\mu _{11}}.$
Consequently, we complete the proof.
\end{proof}

\begin{lemma}
\label{g4}Suppose that $1<p<3$ and $\mu _{ij}>0.$ Let $0<\mu _{11}<\Lambda
_{0}$ and let $w_{\mu _{11}}$ be a positive solution of Equation $\left(
SP_{\mu _{11}}\right) $ with $I_{\mu _{11}}\left( w_{\mu _{11}}\right)
=\beta _{\mu _{11}}.$ Then we have the following results.\newline
$\left( i\right) $ If $\det \left( \mu _{ij}\right) >0,$ then
there exist two constants $t_{0}^{+}$ and $t_{0}^{-}$ which satisfy
\begin{equation*}
0<t_{0}^{-}<\left( \frac{2\left\Vert w_{\mu _{11}}\right\Vert _{H^{1}}^{2}}{%
\left( 3-p\right) \int_{\mathbb{R}^{3}}w_{\mu _{11}}^{p+1}dx}\right)
^{1/\left( p-1\right) }<t_{0}^{+},
\end{equation*}%
such that%
\begin{equation*}
\left( t_{0}^{\pm }\sqrt{s_{\min }}w_{\mu _{11}},t_{0}^{\pm }\sqrt{1-s_{0}}%
w_{\mu _{11}}\right) \in \mathbf{M}^{\pm }
\end{equation*}%
and%
\begin{eqnarray*}
J\left( t_{0}^{-}\sqrt{s_{\min }}w_{\mu _{11}},t_{0}^{-}\sqrt{1-s_{\min }}%
w_{\mu _{11}}\right) &<&\beta _{\mu _{11}}, \\
J\left( t_{0}^{+}\sqrt{s_{\min }}w_{\mu _{11}},t_{0}^{+}\sqrt{1-s_{\min }}%
w_{\mu _{11}}\right) &=&\inf_{t\geq 0}J\left( t\sqrt{s_{\min }}w_{\mu
_{11}},t\sqrt{1-s_{\min }}w_{\mu _{11}}\right) <0,
\end{eqnarray*}%
where $s_{\min }=\frac{\mu _{22}+\mu _{12}}{\mu _{11}+\mu _{22}+2\mu _{12}}$
as in Lemma \ref{L2-1}. In particular,%
\begin{equation*}
\left( t_{0}^{-}\sqrt{s_{\min }}w_{\mu _{11}},t_{0}^{-}\sqrt{1-s_{\min }}%
w_{\mu _{11}}\right) \in \mathbf{M}^{\left( 1\right) }
\end{equation*}%
and
\begin{equation*}
\left( t_{0}^{+}\sqrt{s_{\min }}w_{\mu _{11}},t_{0}^{+}\sqrt{1-s_{\min }}%
w_{\mu _{11}}\right) \in \mathbf{M}^{\left( 2\right) }.
\end{equation*}%
$\left( ii\right) $ If $\det \left( \mu _{i,j}\right) \leq 0,$ then there
exists a constant $t_{0}^{-}$ which satisfy
\begin{equation*}
0<t_{0}^{-}<\left( \frac{2\left\Vert w_{\mu _{11}}\right\Vert _{H^{1}}^{2}}{%
\left( 3-p\right) \int_{\mathbb{R}^{3}}w_{\mu _{11}}^{p+1}dx}\right)
^{1/\left( p-1\right) },
\end{equation*}%
such that%
\begin{equation*}
\left( t_{0}^{-}\sqrt{s_{\min }}w_{\mu _{11}},t_{0}^{-}\sqrt{1-s_{\min }}%
w_{\mu _{11}}\right) \in \mathbf{M}^{\left( 1\right) }
\end{equation*}%
and%
\begin{equation*}
J\left( t_{0}^{-}\sqrt{s_{\min }}w_{\mu _{11}},t_{0}^{-}\sqrt{1-s_{\min }}%
w_{\mu _{11}}\right) <\beta _{\mu _{ii}}.
\end{equation*}
\end{lemma}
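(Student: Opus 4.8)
The plan is to reduce the whole statement to a one–variable analysis of the fibering map of $J$ along the single fixed direction
$$ (u_{*},v_{*}):=\bigl(\sqrt{s_{\min}}\,w,\ \sqrt{1-s_{\min}}\,w\bigr),\qquad w:=w_{\mu _{11}}. $$
Write $a:=\|w\|_{H^{1}}^{2}$, $b:=\int_{\mathbb R^{3}}\phi_{w}w^{2}dx>0$, $P:=\int_{\mathbb R^{3}}|w|^{p+1}dx$, $D:=g(s_{\min})=\dfrac{\mu_{11}\mu_{22}-\mu_{12}^{2}}{\mu_{11}+\mu_{22}+2\mu_{12}}$ and $K:=\dfrac{1}{2\pi}\int_{0}^{2\pi}\bigl(1+2\sqrt{s_{\min}(1-s_{\min})}\,\cos\theta\bigr)^{(p+1)/2}d\theta$. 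Using the homogeneity $\phi_{cw}=c^{2}\phi_{w}$ and the identity $\mu_{11}s^{2}+\mu_{22}(1-s)^{2}-2\mu_{12}s(1-s)=g(s)$ together with Lemma \ref{L2-1} (so that the sign of $D$ equals that of $\det(\mu_{ij})$ and $D<\mu_{ii}$), a direct computation gives, for $t>0$,
$$ \psi(t):=h_{(u_{*},v_{*})}(t)=J(tu_{*},tv_{*})=\frac{a}{2}t^{2}+\frac{Db}{4}t^{4}-\frac{KP}{p+1}t^{p+1}. $$
Since $x\mapsto x^{(p+1)/2}$ is strictly convex for $p>1$ and $s_{\min}\in(0,1)$, Jensen's inequality yields $K>1$; combined with $D<\mu_{11}$ this gives $\psi(t)<f_{w}(t):=I_{\mu_{11}}(tw)$ for all $t>0$, which will be the source of all the upper energy estimates.

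Next I would study $\psi'(t)=t\,m(t)$ with $m(t):=a+Db\,t^{2}-KP\,t^{p-1}$. The decisive elementary fact is $m(1)<0$: by the Nehari identity $a+\mu_{11}b=P$ for $w$ one has $m(1)=(1-K)P+(D-\mu_{11})b<0$, since $K>1$ and $D<\mu_{11}$. Now $m(0^{+})=a>0$; moreover $m$ is strictly decreasing on $(0,\infty)$ with $m(t)\to-\infty$ if $D\le0$, whereas if $D>0$ then $m$ has a unique interior minimum and $m(t)\to+\infty$. Hence $m(1)<0$ forces: in case $(i)$ [$\det(\mu_{ij})>0$] exactly two positive zeros $t_{0}^{-}<1<t_{0}^{+}$ of $m$, with $m>0$ on $(0,t_{0}^{-})$, $m<0$ on $(t_{0}^{-},t_{0}^{+})$ and $m>0$ on $(t_{0}^{+},\infty)$; and in case $(ii)$ [$\det(\mu_{ij})\le0$] a unique positive zero $t_{0}^{-}<1$, with $m>0$ on $(0,t_{0}^{-})$ and $m<0$ on $(t_{0}^{-},\infty)$. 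Because, for a zero $t_{0}$ of $m$, $h_{(t_{0}u_{*},t_{0}v_{*})}''(1)=t_{0}^{2}\psi''(t_{0})=t_{0}^{3}m'(t_{0})$, and $m'(t_{0}^{-})<0$ while $m'(t_{0}^{+})>0$, we conclude $(t_{0}^{-}u_{*},t_{0}^{-}v_{*})\in\mathbf M^{-}$ and, in case $(i)$, $(t_{0}^{+}u_{*},t_{0}^{+}v_{*})\in\mathbf M^{+}$.

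To locate $t_{0}^{\pm}$ relative to $t_{1}:=\bigl(\tfrac{2a}{(3-p)P}\bigr)^{1/(p-1)}$ it suffices, since $m$ is negative exactly on $(t_{0}^{-},t_{0}^{+})$ (resp. on $(t_{0}^{-},\infty)$), to prove $m(t_{1})<0$. As $KP\,t_{1}^{p-1}=\tfrac{2Ka}{3-p}$, we get $m(t_{1})=a\bigl(1-\tfrac{2K}{3-p}\bigr)+Db\,t_{1}^{2}$; in case $(ii)$ both summands are $\le 0$. In case $(i)$ put $\varepsilon:=\mu_{11}b/a$; since $w\in\mathbf N_{\mu_{11}}^{-}$ we have $\mu_{11}b<\tfrac{p-1}{3-p}a$, i.e. $\varepsilon\in(0,\tfrac{p-1}{3-p})$, and Nehari gives $P=a(1+\varepsilon)$, so $t_{1}^{2}=\bigl(\tfrac{2}{(3-p)(1+\varepsilon)}\bigr)^{2/(p-1)}$ and $Db\le\mu_{11}b=\varepsilon a$. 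Since $\varepsilon\mapsto\varepsilon(1+\varepsilon)^{-2/(p-1)}$ is increasing on $(0,\tfrac{p-1}{3-p}]$ with maximum $\tfrac{p-1}{3-p}\bigl(\tfrac{3-p}{2}\bigr)^{2/(p-1)}$, it follows that $Db\,t_{1}^{2}<\tfrac{p-1}{3-p}a$, whence $m(t_{1})<\tfrac{2a(1-K)}{3-p}<0$. Thus $t_{0}^{-}<t_{1}<t_{0}^{+}$ in case $(i)$ and $t_{0}^{-}<t_{1}$ in case $(ii)$.

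Finally I would prove the energy statements and deduce the $\mathbf M^{(1)},\mathbf M^{(2)}$ memberships. Since $t_{0}^{-}<1$, $\psi$ is strictly increasing on $(0,t_{0}^{-}]$, $\psi<f_{w}$ on $(0,\infty)$, and $f_{w}$ is increasing on $(0,1]$ with $f_{w}(1)=I_{\mu_{11}}(w)=\beta_{\mu_{11}}$; hence $J(t_{0}^{-}u_{*},t_{0}^{-}v_{*})=\psi(t_{0}^{-})<f_{w}(t_{0}^{-})<\beta_{\mu_{11}}\le\beta_{\mu_{ii}}$. Because $\beta_{\mu_{11}}<\tfrac{A(p)(p-1)}{2(p+1)}\bigl(\tfrac{2S_{p+1}^{p+1}}{3-p}\bigr)^{2/(p-1)}$ (from the properties of $w_{\mu_{11}}$ recalled in Section 2), the point $(t_{0}^{-}u_{*},t_{0}^{-}v_{*})$ lies in the sublevel set $\mathbf M\!\left[\tfrac{A(p)(p-1)}{2(p+1)}\bigl(\tfrac{2S_{p+1}^{p+1}}{3-p}\bigr)^{2/(p-1)}\right]$ and in $\mathbf M^{-}$, and the intersection equals $\mathbf M^{(1)}$ by $(\ref{4-4})$ and Lemma \ref{g7}. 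In case $(i)$, since $\psi(0)=0$, $\psi(t)\to+\infty$, and $t_{0}^{+}$ is the only interior critical point at which $\psi$ can be $\le0$, we get $\psi(t_{0}^{+})=\inf_{t\ge0}\psi(t)$; once $\psi(t_{0}^{+})<0$ is known, combining it with $0<\tfrac{A(p)(p-1)}{2(p+1)}\bigl(\tfrac{2S_{p+1}^{p+1}}{3-p}\bigr)^{2/(p-1)}$ and with membership in $\mathbf M^{+}$ gives $(t_{0}^{+}u_{*},t_{0}^{+}v_{*})\in\mathbf M^{(2)}$. The step I expect to be the main obstacle is precisely this last strict inequality $\psi(t_{0}^{+})<0$: it is equivalent to exhibiting some $t>0$ with $\psi(t)<0$, hence to a quantitative smallness of $Db=g(s_{\min})\int_{\mathbb R^{3}}\phi_{w}w^{2}dx$ (of the form $Db<c(p)\,a$ for a suitable explicit $c(p)>0$), and establishing this forces one to combine the a priori bounds on $a=\|w\|_{H^{1}}^{2}$ and $b=\int_{\mathbb R^{3}}\phi_{w}w^{2}dx$ recalled in Section 2 with the precise magnitude of $\Lambda_{0}$; this is the delicate point of the proof (for $1<p<2$ one may alternatively appeal to the sign estimate for $J$ on $\mathbf M^{+}$ obtained just before Lemma \ref{g5}).
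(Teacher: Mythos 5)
Most of your argument is, in substance, the paper's own proof in different notation: your $m(t)=a+Db\,t^{2}-KP\,t^{p-1}$ is exactly $t^{2}\bigl(\eta(t)+Db\bigr)$ for the paper's function $\eta$, and the ingredients coincide (Jensen's inequality giving $K>1$, the Nehari identity of $w_{\mu_{11}}$ giving $m(1)<0$, the sign of $m'$ at the zeros to place the two points in $\mathbf{M}^{\mp}$, the pointwise comparison $\psi(t)=J\bigl(t\sqrt{s_{\min}}w_{\mu_{11}},t\sqrt{1-s_{\min}}w_{\mu_{11}}\bigr)<I_{\mu_{11}}(tw_{\mu_{11}})$ together with $t_{0}^{-}<1$ to get the bound by $\beta_{\mu_{11}}$, and the dichotomy $(\ref{4-4})$ plus Lemma \ref{g7} to convert membership in the sublevel set together with $\mathbf{M}^{-}$ into membership in $\mathbf{M}^{(1)}$). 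Your verification that $m(t_{1})<0$ at $t_{1}=\bigl(\tfrac{2a}{(3-p)P}\bigr)^{1/(p-1)}$, via the monotonicity of $\varepsilon\mapsto\varepsilon(1+\varepsilon)^{-2/(p-1)}$ on $\bigl(0,\tfrac{p-1}{3-p}\bigr]$, is correct and in fact more explicit than the corresponding step in the paper.

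There is, however, a genuine gap in case $(i)$: the assertions $J\bigl(t_{0}^{+}\sqrt{s_{\min}}w_{\mu_{11}},t_{0}^{+}\sqrt{1-s_{\min}}w_{\mu_{11}}\bigr)=\inf_{t\geq 0}J\bigl(t\sqrt{s_{\min}}w_{\mu_{11}},t\sqrt{1-s_{\min}}w_{\mu_{11}}\bigr)<0$ are part of the statement, and you explicitly leave the strict negativity unproved. Nothing you established implies it: from your analysis one only gets $\inf_{t\geq 0}\psi=\min\{0,\psi(t_{0}^{+})\}$, and $\psi(t_{0}^{+})$ could a priori be nonnegative; the inequality is not a soft consequence of $D<\mu_{11}$ and $K>1$ but needs the quantitative hypothesis $0<\mu_{11}<\Lambda_{0}$. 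This is precisely where the paper invests its effort: writing $w_{0}=w_{\mu_{11}}$, it bounds $J\bigl(t\sqrt{s_{\min}}w_{0},t\sqrt{1-s_{\min}}w_{0}\bigr)<t^{4}\bigl[\xi(t)+\tfrac{\mu_{11}}{4}\int_{\mathbb{R}^{3}}\phi_{w_{0}}w_{0}^{2}dx\bigr]$ with $\xi(t)=\tfrac{t^{-2}}{2}\Vert w_{0}\Vert_{H^{1}}^{2}-\tfrac{t^{p-3}}{p+1}\int_{\mathbb{R}^{3}}w_{0}^{p+1}dx$, evaluates $\inf_{t>0}\xi$ at $t=\bigl(\tfrac{(p+1)\Vert w_{0}\Vert_{H^{1}}^{2}}{(3-p)\int_{\mathbb{R}^{3}}w_{0}^{p+1}dx}\bigr)^{1/(p-1)}$ (a point larger than $t_{0}^{-}$), and then combines the a priori bound $\Vert w_{\mu_{11}}\Vert_{H^{1}}^{2}<\tfrac{3\sqrt{3}(p-1)\pi\lambda^{3/2}}{16\mu_{11}(3-p)}$ (this is where $\mu_{11}<\Lambda_{0}$ enters) with Lemma \ref{L2-3}$(ii)$ to force $\inf_{t>0}\xi(t)<-\tfrac{\mu_{11}}{4}\int_{\mathbb{R}^{3}}\phi_{w_{0}}w_{0}^{2}dx$, whence $J(t_{0}^{+}\cdots)=\inf_{t\geq t_{0}^{-}}J(\cdots)<0$. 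Your suggested fallback for $1<p<2$ (the estimate for $J$ on $\mathbf{M}^{+}$ preceding Lemma \ref{g5}) does not cover $2\leq p<3$, so it cannot close the gap for the full range $1<p<3$. Note also that you made the $\mathbf{M}^{(2)}$ membership conditional on this negativity, whereas it already follows from $\psi(t_{0}^{+})<\psi(t_{0}^{-})<\beta_{\mu_{11}}<D_{0}$ together with membership in $\mathbf{M}^{+}$ and the dichotomy; the genuinely missing item is the inequality $\psi(t_{0}^{+})<0$ itself, and with it the identification of $\psi(t_{0}^{+})$ as the infimum over $t\geq 0$.
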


\begin{proof}
Define $w_{0}\left( x\right) :=w_{\mu _{11}}\left( x\right) $ and
\begin{eqnarray*}
\eta \left( t\right) &=&t^{-2}\left\Vert \left( \sqrt{s_{\min }}w_{0},\sqrt{%
1-s_{\min }}w_{0}\right) \right\Vert _{H}^{2}-\frac{t^{p-3}}{2\pi }\int_{%
\mathbb{R}^{3}}\int_{0}^{2\pi }\left\vert \sqrt{s_{\min }}w_{0}+e^{i\theta }%
\sqrt{1-s_{\min }}w_{0}\right\vert ^{p+1}d\theta dx \\
&=&t^{-2}\left\Vert w_{0}\right\Vert _{H^{1}}^{2}-\frac{t^{p-3}}{2\pi }\int_{%
\mathbb{R}^{3}}\int_{0}^{2\pi }\left( s_{\min }w_{0}^{2}+2\sqrt{s_{\min
}\left( 1-s_{\min }\right) }w_{0}^{2}\cos \theta +\left( 1-s_{\min }\right)
w_{0}^{2}\right) ^{\left( p+1\right) /2}d\theta dx \\
&=&t^{-2}\left\Vert w_{0}\right\Vert _{H^{1}}^{2}-\frac{t^{p-3}}{2\pi }\int_{%
\mathbb{R}^{3}}\int_{0}^{2\pi }\left( 1+2\sqrt{s_{\min }\left( 1-s_{\min
}\right) }\cos \theta \right) ^{\left( p+1\right) /2}w_{0}^{p+1}d\theta dx,\,\,%
\text{ for }t>0.
\end{eqnarray*}%
Clearly, $t_{0}w_{0}\in \mathbf{M}$ if and only if
\begin{eqnarray*}
\eta \left( t_{0}\right) &=&-\int_{\mathbb{R}^{3}}\mu _{11}\phi _{_{\sqrt{%
s_{\min }}w_{0}}}s_{\min }w_{0}^{2}+\mu _{22}\phi _{_{\sqrt{1-s_{\min }}%
w_{0}}}\left( 1-s_{\min }\right) w_{0}^{2}-2\mu _{12}\phi _{_{\sqrt{%
1-s_{\min }}w_{0}}}\left( 1-s_{\min }\right) w_{0}^{2}dx \\
&=&-\left[ \mu _{11}s_{\min }^{2}+\mu _{22}\left( 1-s_{\min }\right)
^{2}-2\mu _{12}s_{\min }\left( 1-s_{\min }\right) \right] \int_{\mathbb{R}%
^{3}}\phi _{_{w_{0}}}w_{0}^{2}dx \\
&=&-\frac{\mu _{11}\mu _{22}-\mu _{12}^{2}}{\mu _{11}+\mu _{22}+2\mu _{12}}%
\int_{\mathbb{R}^{3}}\phi _{_{w_{0}}}w_{0}^{2}dx,\,\,\text{ for some }t_{0}>0.
\end{eqnarray*}%
Moreover, by Jensen's inequality,%
\begin{eqnarray*}
\frac{1}{2\pi }\int_{0}^{2\pi }\left( 1+2\sqrt{s_{\min }\left( 1-s_{\min
}\right) }\cos \theta \right) ^{\left( p+1\right) /2}d\theta &>&\left( \frac{%
1}{2\pi }\int_{0}^{2\pi }1+2\sqrt{s_{\min }\left( 1-s_{\min }\right) }\cos
\theta d\theta \right) ^{\left( p+1\right) /2} \\
&=&1.
\end{eqnarray*}%
Thus,%
\begin{equation*}
\eta \left( t\right) <\eta _{0}\left( t\right),\, \text{ for }t>0,
\end{equation*}%
where%
\begin{equation*}
\eta _{0}\left( t\right) =t^{-2}\left\Vert w_{0}\right\Vert
_{H^{1}}^{2}-t^{p-3}\int_{\mathbb{R}^{3}}w_{0}^{p+1}dx.
\end{equation*}%
A straightforward evaluation gives
\begin{equation*}
\eta _{0}\left( 1\right) =-\mu _{11}\int_{\mathbb{R}^{3}}\phi
_{_{w_{0}}}w_{0}^{2}dx,\lim_{t\rightarrow 0^{+}}\eta _{0}(t)=\infty \text{
and }\lim_{t\rightarrow \infty }\eta _{0}(t)=0.
\end{equation*}%
Since $1<p<3$ and
\begin{equation*}
\eta _{0}^{\prime }\left( t\right) =t^{-3}\left[ -2\left\Vert
w_{0}\right\Vert _{H^{1}}^{2}+\left( 3-p\right) t^{p-2}\int_{\mathbb{R}%
^{3}}w_{0}^{p+1}dx\right] ,
\end{equation*}%
we find that $\eta _{0}\left( t\right) $ is decreasing when $0<t<\left(
\frac{2\left\Vert w_{0}\right\Vert _{H^{1}}^{2}}{\left( 3-p\right) \int_{%
\mathbb{R}^{3}}w_{0}^{p+1}dx}\right) ^{1/\left( p-1\right) }$ and is
increasing when $t>\left( \frac{2\left\Vert w_{0}\right\Vert _{H^{1}}^{2}}{%
\left( 3-p\right) \int_{\mathbb{R}^{3}}w_{0}^{p+1}dx}\right) ^{1/\left(
p-1\right) }>1.$ This gives
\begin{equation*}
\inf_{t>0}\eta _{0}\left( t\right) =\eta _{0}\left( \left( \frac{2\left\Vert
w_{0}\right\Vert _{H^{1}}^{2}}{\left( 3-p\right) \int_{\mathbb{R}%
^{3}}w_{0}^{p+1}dx}\right) ^{1/\left( p-1\right) }\right) ,
\end{equation*}%
and it follows that
\begin{eqnarray*}
\inf_{t>0}\eta \left( t\right) &\leq &\eta \left( \left( \frac{2\left\Vert
w_{0}\right\Vert _{H^{1}}^{2}}{\left( 3-p\right) \int_{\mathbb{R}%
^{3}}w_{0}^{p+1}dx}\right) ^{1/\left( p-1\right) }\right) <\eta _{0}\left(
\left( \frac{2\left\Vert w_{0}\right\Vert _{H^{1}}^{2}}{\left( 3-p\right)
\int_{\mathbb{R}^{3}}w_{0}^{p+1}dx}\right) ^{1/\left( p-1\right) }\right) \\
&<&\eta _{0}\left( 1\right) \\
&=&-\mu _{11}\int_{\mathbb{R}^{3}}\phi _{_{w_{0}}}w_{0}^{2}dx \\
&<&-\frac{\mu _{11}\mu _{22}-\mu _{12}^{2}}{\mu _{11}+\mu _{22}+2\mu _{12}}%
\int_{\mathbb{R}^{3}}\phi _{_{w_{0}}}w_{0}^{2}dx \\
&=&-\left[ \mu _{11}s_{\min }^{2}+\mu _{22}\left( 1-s_{\min }\right)
^{2}-2\mu _{12}s_{\min }\left( 1-s_{\min }\right) \right] \int_{\mathbb{R}%
^{3}}\phi _{_{w_{0}}}w_{0}^{2}dx \\
&=&-\int_{\mathbb{R}^{3}}\mu _{11}\phi _{_{\sqrt{s_{\min }}w_{0}}}s_{\min
}w_{0}^{2}+\mu _{22}\phi _{_{\sqrt{1-s_{\min }}w_{0}}}\left( 1-s_{\min
}\right) w_{0}^{2}-2\mu _{12}\phi _{_{\sqrt{1-s_{\min }}w_{0}}}\left(
1-s_{\min }\right) w_{0}^{2}dx,
\end{eqnarray*}%
since%
\begin{eqnarray*}
\mu _{11}s_{\min }^{2}+\mu _{22}\left( 1-s_{\min }\right) ^{2}-2\mu
_{12}s_{\min }\left( 1-s_{\min }\right) &=&\frac{\mu _{11}\mu _{22}-\mu
_{12}^{2}}{\mu _{11}+\mu _{22}+2\mu _{12}} \\
&<&\mu _{ii},\,\text{ for }i=1,2.
\end{eqnarray*}%
$\left( i\right) $ $\det \left( \mu _{ij}\right) =\mu _{11}\mu _{22}-\mu
_{12}^{2}>0.$ Since $\lim_{t\rightarrow 0^{+}}\eta (t)=\infty $ and $%
\lim_{t\rightarrow \infty }\eta (t)=0,$ there exist two constants $t_{0}^{+}$
and $t_{0}^{-}>0$ which satisfy
\begin{equation}
t_{0}^{-}<1<\left( \frac{2\left\Vert w_{0}\right\Vert _{H^{1}}^{2}}{\left(
3-p\right) \int_{\mathbb{R}^{3}}w_{0}^{p+1}dx}\right) ^{1/\left( p-1\right)
}<t_{0}^{+}  \label{2-5},
\end{equation}%
such that
\begin{equation*}
\eta \left( t_{0}^{\pm }\right) +\int_{\mathbb{R}^{3}}\mu _{11}\phi _{_{%
\sqrt{s_{\min }}w_{0}}}s_{\min }w_{0}^{2}+\mu _{22}\phi _{_{\sqrt{1-s_{\min }%
}w_{0}}}\left( 1-s_{\min }\right) w_{0}^{2}-2\mu _{12}\phi _{_{\sqrt{%
1-s_{\min }}w_{0}}}\left( 1-s_{\min }\right) w_{0}^{2}dx=0.
\end{equation*}%
That is,
\begin{equation*}
\left( t_{0}^{\pm }\sqrt{s_{\min }}w_{0},t_{0}^{\pm }\sqrt{1-s_{\min }}%
w_{0}\right) \in \mathbf{M}.
\end{equation*}%
By a calculation on the second order derivatives, we find
\begin{eqnarray*}
&&h_{\left( t_{0}^{-}\sqrt{s_{\min }}w_{0},t_{0}^{-}\sqrt{1-s_{\min }}%
w_{0}\right) }^{\prime \prime }\left( 1\right) \\
&=&-2\left\Vert t_{0}^{-}w_{0}\right\Vert _{H^{1}}^{2}+\frac{3-p}{2\pi }%
\int_{\mathbb{R}^{3}}\int_{0}^{2\pi }\left( 1+2\sqrt{s_{\min }\left(
1-s_{\min }\right) }\cos \theta \right) ^{\left( p+1\right) /2}\left\vert
t_{0}^{-}w_{0}\right\vert ^{p+1}d\theta dx \\
&=&\left( t_{0}^{-}\right) ^{5}\eta ^{\prime }\left( t_{0}^{-}\right) \\
&<&0,
\end{eqnarray*}%
and
\begin{eqnarray*}
&&h_{\left( t_{0}^{+}\sqrt{s_{\min }}w_{0},t_{0}^{+}\sqrt{1-s_{\min }}%
w_{0}\right) }^{\prime \prime }\left( 1\right) \\
&=&-2\left\Vert t_{0}^{+}w_{0}\right\Vert _{H^{1}}^{2}+\frac{3-p}{2\pi }%
\int_{\mathbb{R}^{3}}\int_{0}^{2\pi }\left( 1+2\sqrt{s_{\min }\left(
1-s_{\min }\right) }\cos \theta \right) ^{\left( p+1\right) /2}\left\vert
t_{0}^{+}w_{0}\right\vert ^{p+1}d\theta dx \\
&=&\left( t_{0}^{+}\right) ^{5}\eta ^{\prime }\left( t_{0}^{+}\right) \\
&>&0.
\end{eqnarray*}%
This implies that
\begin{equation*}
\left( t_{0}^{\pm }\sqrt{s_{\min }}w_{0},t_{0}^{\pm }\sqrt{1-s_{\min }}%
w_{0}\right) \in \mathbf{M}^{\pm }
\end{equation*}%
and
\begin{eqnarray*}
&&h_{\left( t_{0}^{+}\sqrt{s_{\min }}w_{0},t_{0}^{+}\sqrt{1-s_{\min }}%
w_{0}\right) }^{\prime }\left( t\right) \\
&=&t^{3}\left( \eta (t)+\int_{\mathbb{R}^{3}}\mu _{11}\phi _{_{\sqrt{s_{\min
}}w_{0}}}s_{\min }w_{0}^{2}+\mu _{22}\phi _{_{\sqrt{1-s_{\min }}%
w_{0}}}\left( 1-s_{\min }\right) w_{0}^{2}-2\mu _{12}\phi _{_{\sqrt{%
1-s_{\min }}w_{0}}}\left( 1-s_{\min }\right) w_{0}^{2}dx\right) .
\end{eqnarray*}%
Clearly,
\begin{equation*}
h_{\left( \sqrt{s_{\min }}w_{0},\sqrt{1-s_{\min }}w_{0}\right) }^{\prime
}\left( t\right) >0\text{ for all }t\in \left( 0,t_{0}^{-}\right) \cup
\left( t_{0}^{+},\infty \right)
\end{equation*}%
and
\begin{equation*}
h_{\left( \sqrt{s_{\min }}w_{0},\sqrt{1-s_{\min }}w_{0}\right) }^{\prime
}\left( t\right) <0\text{ for all }t\in \left( t_{0}^{-},t_{0}^{+}\right) .
\end{equation*}%
As a result,
\begin{equation*}
J\left( t_{0}^{-}\sqrt{s_{\min }}w_{0},t_{0}^{-}\sqrt{1-s_{\min }}%
w_{0}\right) =\sup_{0\leq t\leq t_{0}^{+}}J\left( t\sqrt{s_{\min }}w_{0},t%
\sqrt{1-s_{\min }}w_{0}\right)
\end{equation*}%
and%
\begin{equation*}
J\left( t_{0}^{+}\sqrt{s_{\min }}w_{0},t_{0}^{+}\sqrt{1-s_{\min }}%
w_{0}\right) =\inf_{t\geq t_{0}^{-}}J\left( t\sqrt{s_{\min }}w_{0},t\sqrt{%
1-s_{\min }}w_{0}\right) .
\end{equation*}%
Thus,
\begin{equation*}
J\left( t_{0}^{+}\sqrt{s_{\min }}w_{0},t_{0}^{+}\sqrt{1-s_{\min }}%
w_{0}\right) <J\left( t_{0}^{-}\sqrt{s_{\min }}w_{0},t_{0}^{-}\sqrt{%
1-s_{\min }}w_{0}\right) .
\end{equation*}%
Since $t_{0}^{+}>\left( \frac{2\left\Vert w_{0}\right\Vert _{H^{1}}^{2}}{%
\left( 3-p\right) \int_{\mathbb{R}^{3}}w_{0}^{p+1}dx}\right) ^{1/\left(
p-1\right) }>1$ and%
\begin{eqnarray*}
J\left( t\sqrt{s_{\min }}w_{0},t\sqrt{1-s_{\min }}w_{0}\right) &<&\frac{t^{2}%
}{2}\left\Vert w_{0}\right\Vert _{H^{1}}^{2}+\frac{t^{4}\left( \mu _{11}\mu
_{22}-\mu _{12}^{2}\right) }{4\left( \mu _{11}+\mu _{22}+2\mu _{12}\right) }%
\int_{\mathbb{R}^{3}}\phi _{w_{0}}^{2}w_{0}dx \\
&&-\frac{t^{p}}{p+1}\int_{\mathbb{R}^{3}}\left\vert w_{0}\right\vert ^{p+1}dx
\\
&<&\frac{t^{2}}{2}\left\Vert w_{0}\right\Vert _{H^{1}}^{2}+\frac{\mu
_{ii}t^{4}}{4}\int_{\mathbb{R}^{3}}\phi _{w_{0}}^{2}w_{0}dx-\frac{t^{p}}{p+1}%
\int_{\mathbb{R}^{3}}\left\vert w_{0}\right\vert ^{p+1}dx,
\end{eqnarray*}%
we have
\begin{eqnarray*}
J\left( t_{0}^{-}\sqrt{s_{\min }}w_{0},t_{0}^{-}\sqrt{1-s_{\min }}%
w_{0}\right) &=&\sup_{0\leq t\leq t_{0}^{+}}J\left( t\sqrt{s_{\min }}w_{0},t%
\sqrt{1-s_{\min }}w_{0}\right) \\
&<&\frac{1}{2}\left\Vert w_{0}\right\Vert _{H^{1}}^{2}+\frac{\mu _{11}}{4}%
\int_{\mathbb{R}^{3}}\phi _{w_{0}}^{2}w_{0}dx-\frac{1}{p+1}\int_{\mathbb{R}%
^{3}}\left\vert w_{0}\right\vert ^{p+1}dx \\
&=&\beta _{\mu _{11}},
\end{eqnarray*}%
implying that $\left( t_{0}^{-}\sqrt{s_{\min }}w_{0},t_{0}^{-}\sqrt{%
1-s_{\min }}w_{0}\right) \in \mathbf{M}^{\left( 1\right) }.$ Moreover,
\begin{eqnarray*}
J\left( t\sqrt{s_{\min }}w_{0},t\sqrt{1-s_{\min }}w_{0}\right) &<&\frac{t^{2}%
}{2}\left\Vert w_{0}\right\Vert _{H^{1}}^{2}+\frac{\mu _{11}t^{4}}{4}\int_{%
\mathbb{R}^{3}}\phi _{w_{0}}^{2}w_{0}dx-\frac{t^{p+1}}{p+1}\int_{\mathbb{R}%
^{3}}\left\vert w_{0}\right\vert ^{p+1}dx \\
&=&t^{4}\left[ \xi \left( t\right) +\frac{\mu _{11}}{4}\int_{\mathbb{R}%
^{3}}\phi _{w_{0}}^{2}w_{0}dx\right] ,
\end{eqnarray*}%
where
\begin{equation*}
\xi \left( t\right) =\frac{t^{-2}}{2}\left\Vert w_{0}\right\Vert
_{H^{1}}^{2}-\frac{t^{p-3}}{p+1}\int_{\mathbb{R}^{3}}\left\vert
w_{0}\right\vert ^{p+1}dx.
\end{equation*}%
Clearly, $J\left( t_{0}\sqrt{s_{\min }}w_{0},t_{0}\sqrt{1-s_{\min }}%
w_{0}\right) <0$ if
\begin{equation*}
\xi \left( t_{0}\right) +\frac{\mu _{11}}{4}\int_{\mathbb{R}^{3}}\phi
_{w_{0}}^{2}w_{0}dx\leq 0\text{ for some }t_{0}>0.
\end{equation*}%
It is not difficult to observe that
\begin{equation*}
\xi \left( \hat{t}_{0}\right) =0,\ \ \lim_{t\rightarrow 0^{+}}\xi (t)=\infty
\ \text{ and }\ \lim_{t\rightarrow \infty }\xi (t)=0,
\end{equation*}%
where $\hat{t}_{0}=\left( \frac{\left( p+1\right) \left\Vert
w_{0}\right\Vert _{H^{1}}^{2}}{2\int_{\mathbb{R}^{3}}\left\vert
w_{0}\right\vert ^{p+1}dx}\right) ^{1/\left( p-1\right) }.$ Considering the
derivative of $\xi (t)$, we find
\begin{eqnarray*}
\xi ^{\prime }\left( t\right) &=&-t^{-3}\left\Vert w_{0}\right\Vert
_{H^{1}}^{2}+\frac{3-p}{p+1}t^{p-4}\int_{\mathbb{R}^{3}}\left\vert
w_{0}\right\vert ^{p+1}dx \\
&=&t^{-3}\left[ \frac{3-p}{p+1}t^{p-1}\int_{\mathbb{R}^{3}}\left\vert
w_{0}\right\vert ^{p+1}dx-\left\Vert w_{0}\right\Vert _{H^{1}}^{2}\right] ,
\end{eqnarray*}%
which implies that $\xi \left( t\right) $ is decreasing when $0<t<\left(
\frac{\left( p+1\right) \left\Vert w_{0}\right\Vert _{H^{1}}^{2}}{\left(
3-p\right) \int_{\mathbb{R}^{3}}\left\vert w_{0}\right\vert ^{p+1}dx}\right)
^{1/\left( p-1\right) }$ and is increasing when $t>\left( \frac{\left(
p+1\right) \left\Vert w_{0}\right\Vert _{H^{1}}^{2}}{\left( 3-p\right) \int_{%
\mathbb{R}^{3}}\left\vert w_{0}\right\vert ^{p+1}dx}\right) ^{1/\left(
p-1\right) }.$ Then%
\begin{equation*}
t_{0}^{-}<1<\left( \frac{2\left\Vert w_{0}\right\Vert _{H^{1}}^{2}}{\left(
3-p\right) \int_{\mathbb{R}^{3}}\left\vert w_{0}\right\vert ^{p+1}dx}\right)
^{1/\left( p-1\right) }<\left( \frac{\left( p+1\right) \left\Vert
w_{0}\right\Vert _{H^{1}}^{2}}{\left( 3-p\right) \int_{\mathbb{R}%
^{3}}\left\vert w_{0}\right\vert ^{p+1}dx}\right) ^{1/\left( p-1\right) }
\end{equation*}%
and%
\begin{eqnarray*}
\inf_{t>0}\xi \left( t\right) &=&\xi \left[ \left( \frac{\left( p+1\right)
\left\Vert w_{0}\right\Vert _{H^{1}}^{2}}{\left( 3-p\right) \int_{\mathbb{R}%
^{3}}\left\vert w_{0}\right\vert ^{p+1}dx}\right) ^{1/\left( p-1\right) }%
\right] \\
&=&-\frac{p-1}{2\left( 3-p\right) }\left( \frac{\left( 3-p\right) \int_{%
\mathbb{R}^{3}}\left\vert w_{0}\right\vert ^{p+1}dx}{\left( p+1\right)
\left\Vert w_{0}\right\Vert _{H^{1}}^{2}}\right) ^{2/\left( p-1\right)
}\left\Vert w_{0}\right\Vert _{H^{1}}^{2} \\
&<&-\frac{p-1}{2\left( 3-p\right) }\left\Vert w_{0}\right\Vert _{H^{1}}^{2}
\\
&<&-\frac{4\mu _{11}}{3\sqrt{3}\pi \lambda ^{\frac{3}{2}}}\left\Vert
w_{0}\right\Vert _{H^{1}}^{4} \\
&<&-\frac{\mu _{11}}{4}\int_{\mathbb{R}^{3}}\phi _{w_{0}}^{2}w_{0}dx,
\end{eqnarray*}%
which, subsequently, yields
\begin{equation*}
J\left( t_{0}^{+}\sqrt{s_{\min }}w_{0},t_{0}^{+}\sqrt{1-s_{\min }}%
w_{0}\right) =\inf_{t\geq t_{0}^{-}}J\left( t\sqrt{s_{\min }}w_{0},t\sqrt{%
1-s_{\min }}w_{0}\right) <0,
\end{equation*}%
indicating that $\left( t_{0}^{+}\sqrt{s_{\min }}w_{0},t_{0}^{+}\sqrt{%
1-s_{\min }}w_{0}\right) \in \mathbf{M}^{\left( 2\right) }.$\newline
$\left( ii\right) $ $\det \left( \mu _{ij}\right) =\mu _{11}\mu _{22}-\mu
_{12}^{2}\leq 0.$ The proof is similar to the argument used in part $\left(
i\right) $ and is therefore omitted here. This completes the proof.
\end{proof}

\vspace{3mm}
Define
\begin{equation*}
\alpha ^{-}:=\inf_{\left( u,v\right) \in \mathbf{M}^{\left( 1\right)
}}J\left( u,v\right) \text{ for }1<p<3.
\end{equation*}%
Clearly, $0<\frac{p-1}{4\left( p+1\right) }C_{\mu _{12}}^{2}<\alpha
^{-}<\beta _{\mu _{11}}$ for $0<\mu _{11}<\Lambda _{0}.$

\bigskip

\textbf{We are now ready to prove Theorem \ref{t2}.} By Lemmas \ref{g5}, \ref%
{g7}, \ref{g4} and the Ekeland variational principle, there exists a
minimizing sequence $\left\{ \left( u_{n},v_{n}\right) \right\} \subset
\mathbf{M}^{\left( 1\right) }$ such that
\begin{equation}
J\left( u_{n},v_{n}\right) =\alpha ^{-}+o\left( 1\right) \text{ and }%
J^{\prime }\left( u_{n},v_{n}\right) =o\left( 1\right) \text{ in }H^{-1}.
\label{18-0}
\end{equation}%
Since $\left\{ \left( u_{n},v_{n}\right) \right\} $ is bounded, there exists
a convergent subsequence of $\left\{ \left( u_{n},v_{n}\right) \right\} $
(which will also be denoted by $\left\{ \left( u_{n},v_{n}\right) \right\} $ for
convenience) such that as $n\rightarrow \infty $,
\begin{equation}
\left( u_{n},v_{n}\right) \rightharpoonup \left( u_{0},v_{0}\right) \text{
weakly in }H,  \label{12-1}
\end{equation}%
where $\left( u_{0},v_{0}\right) \in H$. By (\ref{12-1}) and Sobolev compact
embedding, we obtain
\begin{eqnarray}
\left( u_{n},v_{n}\right) &\rightarrow &\left( u_{0},v_{0}\right) \text{
strongly in }L_{loc}^{p}\left( \mathbb{R}^{3}\right) \times
L_{loc}^{p}\left( \mathbb{R}^{3}\right) ,  \label{12-2} \\
\left( u_{n},v_{n}\right) &\rightarrow &\left( u_{0},v_{0}\right) \text{
a.e. in }\mathbb{R}^{3}.  \label{12-3}
\end{eqnarray}%
Now we claim that there exist a subsequence $\left\{ \left(
u_{n},v_{n}\right) \right\} _{n=1}^{\infty }$ and a sequence $%
\{x_{n}\}_{n=1}^{\infty }\subset \mathbb{R}^{3}$ such that
\begin{equation}
\int_{B^{N}\left( x_{n},R\right) }\left\vert \left( u_{n},v_{n}\right)
\right\vert ^{2}dx\geq d_{0}>0\text{ for all }n\in \mathbb{N},  \label{12-5}
\end{equation}%
where $d_{0}$ and $R$ are positive constants that are independent of $n.$
Suppose the contrary is true. Then, for all $R>0$,
\begin{equation*}
\sup_{x\in \mathbb{R}^{N}}\int_{B^{N}\left( x_{n},R\right) }\left\vert
\left( u_{n},v_{n}\right) \right\vert ^{2}dx\rightarrow 0\text{ as }%
n\rightarrow \infty .
\end{equation*}%
Thus, applying the argument of Lemma I.1 in \cite{Li1} (see also \cite%
{Wi}) gives
\begin{equation}
\int_{\mathbb{R}^{N}}\left\vert u_{n}\right\vert ^{r}+\left\vert
v_{n}\right\vert ^{r}dx\rightarrow 0\text{ as }n\rightarrow \infty ,
\label{12-4}
\end{equation}%
for all $2<r<2^{\ast }.$ Then we have
\begin{equation*}
\int_{\mathbb{R}^{3}}\int_{0}^{2\pi }\left\vert u_{n}+e^{i\theta
}v_{n}\right\vert ^{p+1}d\theta dx\rightarrow 0\text{ as }n\rightarrow \infty
\end{equation*}%
and
\begin{equation*}
\int_{\mathbb{R}^{3}}\mu _{11}\phi _{_{u_{n}}}u_{n}^{2}+\mu _{22}\phi
_{_{v_{n}}}v_{n}^{2}-2\mu _{12}\phi _{_{v_{n}}}u_{n}^{2}dx\rightarrow 0\text{
as }n\rightarrow \infty ,
\end{equation*}%
implying %
\begin{eqnarray*}
\alpha ^{-}+o\left( 1\right) &=&J\left( u_{n},v_{n}\right) \\
&=&-\frac{1}{4}\int_{\mathbb{R}^{3}}\mu _{11}\phi _{_{u_{n}}}u_{n}^{2}+\mu
_{22}\phi _{_{v_{n}}}v_{n}^{2}-2\mu _{12}\phi _{_{v_{n}}}u_{n}^{2}dx \\
&&+\frac{p-1}{4\left( p+1\right) \pi }\int_{\mathbb{R}^{3}}\int_{0}^{2\pi
}\left\vert u_{n}+e^{i\theta }v_{n}\right\vert ^{p+1}d\theta dx \\
&=&o\left( 1\right);
\end{eqnarray*}%
this contradicts $\alpha ^{-}>0.$ Let $\left( \overline{u}_{n}\left(
x\right) ,\overline{v}_{n}\left( x\right) \right) =\left( u_{n}\left(
x-x_{n}\right) ,v_{n}\left( x-x_{n}\right) \right) .$ Clearly, $\left\{
\left( \overline{u}_{n},\overline{v}_{n}\right) \right\} \subset \mathbf{M}%
^{\left( 1\right) }$ such that
\begin{equation}
J\left( \overline{u}_{n},\overline{v}_{n}\right) =\alpha ^{-}+o\left(
1\right) \text{ and }J^{\prime }\left( \overline{u}_{n},\overline{v}%
_{n}\right) =o\left( 1\right) \text{ in }H^{-1}.  \label{12-7}
\end{equation}%
Since $\left\{ \left( \overline{u}_{n},\overline{v}_{n}\right) \right\} $
is also bounded, there exists a convergent subsequence of $\left\{ \left(
\overline{u}_{n},\overline{v}_{n}\right) \right\} $ and $\left(
u_{0}^{\left( 1\right) },v_{0}^{\left( 1\right) }\right) \in H$ such that as
$n\rightarrow \infty $,
\begin{equation}
\left( \overline{u}_{n},\overline{v}_{n}\right) \rightharpoonup \left(
u_{0}^{\left( 1\right) },v_{0}^{\left( 1\right) }\right) \text{ weakly in }H.
\label{15-1}
\end{equation}%
By (\ref{12-1}) and Sobolev compact embedding, we obtain
\begin{eqnarray}
\left( \overline{u}_{n},\overline{v}_{n}\right) &\rightarrow &\left(
u_{0}^{\left( 1\right) },v_{0}^{\left( 1\right) }\right) \text{ strongly in }%
L_{loc}^{p}\left( \mathbb{R}^{3}\right) \times L_{loc}^{p}\left( \mathbb{R}%
^{3}\right) ,  \label{15-2} \\
\left( \overline{u}_{n},\overline{v}_{n}\right) &\rightarrow &\left(
u_{0}^{\left( 1\right) },v_{0}^{\left( 1\right) }\right) \text{ a.e. in }%
\mathbb{R}^{3}.  \label{15-3}
\end{eqnarray}%
Moreover, by $\left( \ref{12-5}\right) $ and $(\ref{12-7})-(\ref{15-3})$,
\begin{equation*}
\int_{B^{N}\left( R\right) }\left\vert \left( u_{0}^{\left( 1\right)
},v_{0}^{\left( 1\right) }\right) \right\vert ^{2}dx\geq d_{0}>0,
\end{equation*}%
and the function $\left( u_{0}^{\left( 1\right) },v_{0}^{\left( 1\right)
}\right) $ is a nontrivial solution of System $\left( E\right) .$ On the
other hand, we have $\left( u_{0}^{\left( 1\right) },v_{0}^{\left( 1\right)
}\right) \in \mathbf{M}.$ By Fatou's Lemma,%
\begin{equation*}
\left\Vert \left( u_{0}^{\left( 1\right) },v_{0}^{\left( 1\right) }\right)
\right\Vert _{H}^{2}\leq \liminf_{n\rightarrow \infty }\left\Vert \left(
\overline{u}_{n},\overline{v}_{n}\right) \right\Vert _{H}^{2}.
\end{equation*}%
Suppose that
\begin{equation}
\left\Vert \left( u_{0}^{\left( 1\right) },v_{0}^{\left( 1\right) }\right)
\right\Vert _{H}^{2}<\liminf_{n\rightarrow \infty }\left\Vert \left(
\overline{u}_{n},\overline{v}_{n}\right) \right\Vert _{H}^{2}.  \label{15-4}
\end{equation}%
Then $\left\Vert \left( u_{0}^{\left( 1\right) },v_{0}^{\left( 1\right)
}\right) \right\Vert _{H}<\left( \frac{3\sqrt{3}\left( p-1\right) \pi
\lambda ^{\frac{3}{2}}}{16\mu _{22}(3-p)}\right) ^{1/2}$. By $\left( \ref%
{2-6-1}\right) $ and Lemma \ref{L2-3}, it follows from the Sobolev inequality
that
\begin{eqnarray*}
h_{\left( u_{0}^{\left( 1\right) },v_{0}^{\left( 1\right) }\right) }^{\prime
\prime }\left( 1\right) &=&-\left( p-2\right) \left\Vert \left(
u_{0}^{\left( 1\right) },v_{0}^{\left( 1\right) }\right) \right\Vert
_{H}^{2}+\lambda \left( 4-p\right) \int_{\mathbb{R}^{3}}\phi _{u_{0}^{\left(
1\right) },v_{0}^{\left( 1\right) }}\left( \left[ u_{0}^{\left( 1\right) }%
\right] ^{2}+\left[ v_{0}^{\left( 1\right) }\right] ^{2}\right) dx \\
&\leq &\left\Vert \left( u_{0}^{\left( 1\right) },v_{0}^{\left( 1\right)
}\right) \right\Vert _{H}^{2}\left[ \frac{\lambda (4-p)}{\overline{S}%
^{2}S_{12/5}^{4}}\left\Vert \left( u_{0}^{\left( 1\right) },v_{0}^{\left(
1\right) }\right) \right\Vert _{H}^{2}-\left( p-2\right) \right] \\
&<&\left\Vert \left( u_{0}^{\left( 1\right) },v_{0}^{\left( 1\right)
}\right) \right\Vert _{H}^{2}\left( \frac{16\mu _{11}(3-p)}{3\sqrt{3}\pi
\lambda ^{\frac{3}{2}}}\frac{3\sqrt{3}\left( p-1\right) \pi \lambda ^{\frac{3%
}{2}}}{16\mu _{11}(3-p)}-\left( p-1\right) \right) =0,
\end{eqnarray*}%
this indicates that $\left( u_{0}^{\left( 1\right) },v_{0}^{\left( 1\right)
}\right) \in \mathbf{M}^{-}$ and $J\left( u_{0}^{\left( 1\right)
},v_{0}^{\left( 1\right) }\right) \geq \alpha ^{-}.$ Let $\left(
w_{n},z_{n}\right) =\left( \overline{u}_{n}-u_{0}^{\left( 1\right) },%
\overline{v}_{n}-v_{0}^{\left( 1\right) }\right) .$ Then, by $\left( \ref%
{15-1}\right) $ and $\left( \ref{15-4}\right) ,$ there exists $c_{0}>0$ such
that%
\begin{equation*}
c_{0}\leq \left\Vert \left( w_{n},z_{n}\right) \right\Vert
_{H}^{2}=\left\Vert \left( \overline{u}_{n},\overline{v}_{n}\right)
\right\Vert _{H}^{2}-\left\Vert \left( u_{0}^{\left( 1\right)
},v_{0}^{\left( 1\right) }\right) \right\Vert _{H}^{2}+o\left( 1\right) ,
\end{equation*}%
which implies that%
\begin{equation}
\left\Vert \left( w_{n},z_{n}\right) \right\Vert _{H}^{2}<\left( \frac{3%
\sqrt{3}\left( p-1\right) \pi \lambda ^{\frac{3}{2}}}{16\mu _{22}(3-p)}%
\right) ^{1/2},\,\text{ for }n\text{ sufficiently large.}  \label{15-5}
\end{equation}%
On the other hand, the Brezis-Lieb Lemma(cf. \cite{BLi}) gives%
\begin{equation*}
\int_{\mathbb{R}^{3}}\int_{0}^{2\pi }\left\vert \overline{u}_{n}+e^{i\theta }%
\overline{v}_{n}\right\vert ^{p+1}d\theta dx=\int_{\mathbb{R}%
^{3}}\int_{0}^{2\pi }\left\vert w_{n}+e^{i\theta }z_{n}\right\vert
^{p+1}d\theta dx+\int_{\mathbb{R}^{3}}\int_{0}^{2\pi }\left\vert
u_{0}^{\left( 1\right) }+e^{i\theta }v_{0}^{\left( 1\right) }\right\vert
^{p+1}d\theta dx
\end{equation*}%
and%
\begin{eqnarray*}
&&\int_{\mathbb{R}^{3}}\mu _{11}\phi _{_{\overline{u}_{n}}}\overline{u}%
_{n}^{2}+\mu _{22}\phi _{_{\overline{v}_{n}}}\overline{v}_{n}^{2}-2\mu
_{12}\phi _{_{\overline{v}_{n}}}\overline{u}_{n}^{2}dx \\
&=&\int_{\mathbb{R}^{3}}\mu _{11}\phi _{_{w_{n}}}w_{n}^{2}+\mu _{22}\phi
_{_{z_{n}}}z_{n}^{2}-2\mu _{12}\phi _{_{z_{n}}}w_{n}^{2}dx \\
&&+\int_{\mathbb{R}^{3}}\mu _{11}\phi _{_{u_{0}^{\left( 1\right) }}}\left[
u_{0}^{\left( 1\right) }\right] ^{2}+\mu _{22}\phi _{_{v_{0}^{\left(
1\right) }}}\left[ v_{0}^{\left( 1\right) }\right] ^{2}-2\mu _{12}\phi
_{_{v_{0}^{\left( 1\right) }}}\left[ u_{0}^{\left( 1\right) }\right] ^{2}dx.
\end{eqnarray*}%
This implies that%
\begin{equation}
\left\Vert \left( w_{n},z_{n}\right) \right\Vert _{H}^{2}+\int_{\mathbb{R}%
^{3}}\phi _{w_{n},z_{n}}\left( w_{n}^{2}+z_{n}^{2}\right) dx-\int_{\mathbb{R}%
^{3}}\int_{0}^{2\pi }\left\vert w_{n}+e^{i\theta }z_{n}\right\vert
^{p+1}d\theta dx=o\left( 1\right)  \label{15-6}
\end{equation}%
and%
\begin{equation}
J\left( \overline{u}_{n},\overline{v}_{n}\right) =J\left( w_{n},z_{n}\right)
+J\left( u_{0}^{\left( 1\right) },v_{0}^{\left( 1\right) }\right) +o\left(
1\right) .  \label{15-7}
\end{equation}%
Moreover, by $\left( \ref{15-5}\right) $ and $\left( \ref{15-6}\right) ,$
there exists $s_{n}=1+o\left( 1\right) $ such that%
\begin{equation*}
\left\Vert \left( s_{n}w_{n},s_{n}z_{n}\right) \right\Vert _{H}^{2}+\int_{%
\mathbb{R}^{3}}\phi _{s_{n}w_{n},s_{n}z_{n}}\left(
s_{n}^{2}w_{n}^{2}+s_{n}^{2}z_{n}^{2}\right) dx-\int_{\mathbb{R}%
^{3}}\int_{0}^{2\pi }\left\vert s_{n}w_{n}+e^{i\theta }s_{n}z_{n}\right\vert
^{p+1}d\theta dx=0
\end{equation*}%
and%
\begin{equation*}
\left\Vert \left( s_{n}w_{n},s_{n}z_{n}\right) \right\Vert _{H}^{2}<\left(
\frac{3\sqrt{3}\left( p-1\right) \pi \lambda ^{\frac{3}{2}}}{16\mu _{22}(3-p)%
}\right) ^{1/2}\text{ for }n\text{ sufficiently large.}
\end{equation*}%
Hence,%
\begin{equation*}
h_{\lambda ,\left( s_{n}w_{n},s_{n}z_{n}\right) }^{\prime \prime }\left(
1\right) =-\left( p-1\right) \left\Vert \left( s_{n}w_{n},s_{n}z_{n}\right)
\right\Vert _{H}^{2}+\lambda \left( 3-p\right) \left(
s_{n}w_{n},s_{n}z_{n}\right) <0,
\end{equation*}%
implying that $J\left( s_{n}w_{n},s_{n}z_{n}\right) \geq \frac{1}{2}%
\alpha ^{-}$  when  $n$  is sufficiently large. Therefore,%
\begin{equation*}
\alpha ^{-}+o\left( 1\right) =J\left( \overline{u}_{n},\overline{v}%
_{n}\right) \geq \frac{3}{2}\alpha ^{-},\,\text{ for }n\text{ sufficiently
large,}
\end{equation*}%
which is a contradiction. Thus, we can conclude that $\left\Vert \left( u_{0}^{\left(
1\right) },v_{0}^{\left( 1\right) }\right) \right\Vert
_{H}^{2}=\liminf_{n\rightarrow \infty }\left\Vert \left( \overline{u}_{n},%
\overline{v}_{n}\right) \right\Vert _{H}^{2},$ then
\begin{equation*}
\left( \overline{u}_{n},\overline{v}_{n}\right) \rightarrow \left(
u_{0}^{\left( 1\right) },v_{0}^{\left( 1\right) }\right) \text{ strongly in }%
H,
\end{equation*}%
and $J\left( u_{0}^{\left( 1\right) },v_{0}^{\left( 1\right) }\right)
=\alpha ^{-},$ then so is $\left( \left\vert u_{0}^{\left( 1\right)
}\right\vert ,\left\vert v_{0}^{\left( 1\right) }\right\vert \right) .$
Then, by Lemma \ref{g7}, we may assume that $\left( u_{0}^{\left( 1\right)
},v_{0}^{\left( 1\right) }\right) $ is a positive nontrivial critical point
of $J$. Moreover, by Lemma \ref{l5} and $\alpha ^{-}<\beta _{\mu _{11}},$ we
have $u_{0}^{\left( 1\right) }\neq 0$ and $v_{0}^{\left( 1\right) }\neq 0.$
Moreover,
\begin{equation*}
\left\Vert \left( u_{0}^{\left( 1\right) },v_{0}^{\left( 1\right) }\right)
\right\Vert _{H}\leq \left( \frac{3\sqrt{3}\left( p-1\right) \pi \lambda ^{%
\frac{3}{2}}}{16\mu _{22}(3-p)}\right) ^{1/2}\rightarrow 0\text{ as }\mu
_{22}\rightarrow \infty .
\end{equation*}%
We complete the proof.

\section{Positive ground state solutions}

Define
\begin{equation*}
\mathbb{A}:=\left\{ \left( u,v\right) \in H\setminus \left\{ \left(
0,0\right) \right\} :\left( u,v\right) \text{ is a solution of System }(E)%
\text{ with }J\left( u,v\right) <D_{0}\right\} ,
\end{equation*}%
where $D_{0}=\frac{A\left( p\right) \left( p-1\right) }{2\left( p+1\right) }%
\left( \frac{2S_{p+1}^{p+1}}{3-p}\right) ^{2/(p-1)}.$ Clearly, $\mathbb{A}%
\subset \mathbf{M}\left[ D_{0}\right] .$ Let
\begin{equation*}
\overline{\Lambda }_{0}:=\left\{
\begin{array}{ll}
\frac{3\sqrt{3}(p+1)^{2}(p-1)^{1/2}\pi \lambda ^{3/2}}{%
8(5-p)^{2}(3-p)^{1/2} }\left( \frac{3-p}{2S_{p+1}^{p+1}}%
\right) ^{2/(p-1)}, & \text{ if }2\leq p<\frac{\sqrt{73}-2}{3}, \\
\infty , & \text{ if }\frac{\sqrt{73}-2}{3}\leq p<3.%
\end{array}%
\right.
\end{equation*}%
Then we have the following results.

\begin{proposition}
\label{t6}Suppose that $2\leq p<3$ and $\mu _{ij}>0.$ Then we have\newline
$\left( i\right) $ if $2<p<3$ and $\mu _{11}\mu _{22}-\mu _{12}^{2}\geq 0,$
then for each $\mu _{ii}\in \left( 0,\overline{\Lambda }_{0}\right) ,$ we
have $\mathbb{A}\subset \mathbf{M}^{-};$\newline
$\left( ii\right) $ if $p=2,$ then for every $\mu _{ii}\in \left( 0,%
\overline{\Lambda }_{0}\right) ,$ we have $\mathbb{A}\subset \mathbf{M}^{-}.$
\end{proposition}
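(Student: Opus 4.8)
The plan is to show that no genuine solution of $(E)$ with energy below $D_{0}$ can lie in $\mathbf{M}^{(2)}$; since every solution in $\mathbb{A}$ belongs to $\mathbf{M}[D_{0}]=\mathbf{M}^{(1)}\cup\mathbf{M}^{(2)}$ by $(\ref{4-4})$ and $\mathbf{M}^{(1)}\subset\mathbf{M}^{-}$ by Lemma \ref{g7}, this forces $\mathbb{A}\subset\mathbf{M}^{-}$. The decisive extra information a solution carries, beyond membership in $\mathbf{M}$, is the Pohozaev identity $\tfrac12 a+\tfrac32 b+\tfrac54 B=\tfrac{3}{p+1}c$, where I abbreviate $a=\int_{\mathbb{R}^{3}}(|\nabla u|^{2}+|\nabla v|^{2})$, $b=\lambda\int_{\mathbb{R}^{3}}(u^{2}+v^{2})$, $B=\int_{\mathbb{R}^{3}}(\mu_{11}\phi_{u}u^{2}+\mu_{22}\phi_{v}v^{2}-2\mu_{12}\phi_{v}u^{2})$ and $c=\frac{1}{2\pi}\int_{\mathbb{R}^{3}}\int_{0}^{2\pi}|u+e^{i\theta}v|^{p+1}\,d\theta\,dx$. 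Reading this together with the Nehari identity $a+b+B=c$ as a $2\times2$ linear system and solving for $a$ and $B$ in terms of $b,c$ gives $a=\frac{(5p-7)c+(p+1)b}{3(p+1)}$, $B=\frac{2(5-p)c-4(p+1)b}{3(p+1)}$ (the intermediate step $b+\tfrac34 B=\frac{5-p}{2(p+1)}c$ being the source of the factor $5-p$ below). Substituting these into the definition of $J$ and into $(\ref{2-6-1})$ yields, for every solution,
\begin{equation*}
J(u,v)=\frac{2(p-2)c}{3(p+1)}+\frac b3,\qquad
h_{(u,v)}''(1)=\frac{(23-4p-3p^{2})\,c}{3(p+1)}-\frac{8b}{3}.
\end{equation*}

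These identities settle the range $\frac{\sqrt{73}-2}{3}\le p<3$ immediately: there $23-4p-3p^{2}\le0$, so $h_{(u,v)}''(1)<0$ for every solution (as $c\ge0$, $b>0$), hence $\mathbb{A}\subset\mathbf{M}^{-}$ with no restriction on the $\mu_{ii}$ — this is why $\overline{\Lambda}_{0}=\infty$ there. For $2\le p<\frac{\sqrt{73}-2}{3}$ the coefficient $23-4p-3p^{2}$ is positive, and I would argue by contradiction: if a solution $(u,v)\in\mathbb{A}$ lay in $\mathbf{M}^{(2)}\subset\mathbf{M}^{+}$, then $h_{(u,v)}''(1)>0$, i.e.\ $c>\frac{8(p+1)}{23-4p-3p^{2}}\,b$; a one-line check shows this lower bound for $c/b$ is $\ge\frac{2(p+1)}{5-p}$ whenever $p\ge1$, so the formula for $B$ gives $B>0$ and hence $\|(u,v)\|_{H}^{2}=a+b=c-B<c$. (In particular this is automatic for $p=2$, which is why no sign condition on $\mu_{11}\mu_{22}-\mu_{12}^{2}$ is needed there; when $\mu_{11}\mu_{22}-\mu_{12}^{2}\ge0$ one has $B\ge0$ and $\|(u,v)\|_{H}^{2}\le c$ in any case.)

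It then remains to combine $\|(u,v)\|_{H}^{2}<c$, the energy bound $J(u,v)<D_{0}$, the Hardy--Littlewood--Sobolev estimate of Lemma \ref{L2-3}(ii) for $B$ (used in the sharp form $B\le\frac{16\max\{\mu_{11},\mu_{22}\}}{3\sqrt3\,\pi\lambda^{3/2}}\,b^{3/2}a^{1/2}$, which is where the half--integer powers of $p-1$ and $3-p$ in $\overline{\Lambda}_{0}$ come from), and the lower bound $\|(u,v)\|_{H}^{2}>\bigl(\frac{3\sqrt3(p-1)\pi\lambda^{3/2}}{16\max\{\mu_{11},\mu_{22}\}(3-p)}\bigr)^{1/2}$ forced by $\mathbf{M}^{(2)}$. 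Substituting the expressions for $a$ and $B$ into Lemma \ref{L2-3}(ii), using $J(u,v)<D_{0}$ to bound $c$ from above when $p>2$ (and $b$ when $p=2$), and then using $c>\|(u,v)\|_{H}^{2}$ against the lower bound on $\|(u,v)\|_{H}^{2}$, one is led to an inequality that is inconsistent precisely when $\max\{\mu_{11},\mu_{22}\}<\overline{\Lambda}_{0}$; this contradiction finishes the proof. The genuinely delicate part is this last computation: one has to select exactly this Pohozaev--Nehari combination and the precise HLS estimate so that the threshold produced by the arithmetic is exactly $\overline{\Lambda}_{0}$ rather than merely comparable to it; everything around it is routine bookkeeping.
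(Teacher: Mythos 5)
Your closed-form identities are correct and useful: solving Nehari plus Pohozaev for $a$ and $B$ does give $J=\frac{b}{3}+\frac{2(p-2)c}{3(p+1)}$ and $h_{(u,v)}''(1)=\frac{(23-4p-3p^{2})c}{3(p+1)}-\frac{8b}{3}$, and this disposes of $\frac{\sqrt{73}-2}{3}\leq p<3$ exactly as the paper's comparison of $(\ref{4-14})$ and $(\ref{4-15})$ does. But for $2\leq p<\frac{\sqrt{73}-2}{3}$ there are two genuine gaps. First, your scaffolding is not available in the stated parameter range: the decomposition $(\ref{4-4})$ and the inclusions $\mathbf{M}^{(1)}\subset\mathbf{M}^{-}$, $\mathbf{M}^{(2)}\subset\mathbf{M}^{+}$ of Lemma \ref{g7} are proved only when $\max q\geq D_{0}$, i.e.\ for $\mu$ below $\Lambda_{0}$, whereas Proposition \ref{t6} is asserted for all $\mu_{ii}\in(0,\overline{\Lambda}_{0})$ and $\overline{\Lambda}_{0}>\Lambda_{0}$; for $\mu_{ii}\in[\Lambda_{0},\overline{\Lambda}_{0})$ you cannot say that a solution with $J<D_{0}$ lies in $\mathbf{M}^{(1)}\cup\mathbf{M}^{(2)}$, nor that membership in $\mathbf{M}^{(2)}$ forces $h''(1)>0$. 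The paper never uses this decomposition here: it proves directly that every solution with $J<D_{0}$ satisfies $(\ref{c1})$, by parametrizing the full six-variable Nehari--Pohozaev system $(\ref{4-12})$ and pushing the Hardy--Littlewood--Sobolev constraint $(\ref{2-12})$ through that parametrization. Second, and more importantly, the step that actually produces the threshold $\overline{\Lambda}_{0}$ --- in the paper, the chain $(\ref{4-16})$--$(\ref{4-21})$: the bound $z_{2}\leq\frac{5-p}{p-1}\theta$ from $(\ref{2-9-1})$, the quadratic inequality in $s+t$, the factorization giving $l_{0}=\frac{4(p+1)\theta}{3-p}$, and hence $w<\frac{8\theta}{(p-1)(3-p)}$ --- is precisely what you leave as an assertion (``one is led to an inequality that is inconsistent precisely when $\max\{\mu_{11},\mu_{22}\}<\overline{\Lambda}_{0}$''). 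That computation is the entire content of the proposition in this range; a proof that stops before it is not a proof.

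The good news is that your aggregated route can be completed, and without the $\mathbf{M}^{(2)}$ detour: argue by contradiction from $h''(1)\geq0$ itself. By $(\ref{2-6-1})$ this gives $B\geq\frac{p-1}{3-p}(a+b)>0$, while dropping the $z_{5}$-term and applying Lemma \ref{L2-3}$(ii)$ componentwise yields $B\leq\frac{16\max\{\mu_{11},\mu_{22}\}}{3\sqrt{3}\pi\lambda^{3/2}}\,b^{3/2}a^{1/2}$; combining with $a+b\geq2a^{1/2}b^{1/2}$ and with $b\leq 3J<3D_{0}$ (which follows from your identity $J=\frac{b}{3}+\frac{2(p-2)c}{3(p+1)}$ and $c\geq0$, so no use of $\det(\mu_{ij})$ is needed for this bound) forces $\max\{\mu_{11},\mu_{22}\}\geq\frac{\sqrt{3}(p-1)\pi\lambda^{3/2}}{8(3-p)D_{0}}$, which one checks is strictly larger than $\overline{\Lambda}_{0}$ throughout $2\leq p\leq\frac{\sqrt{73}-2}{3}$ (at $p=2$ it equals $4\overline{\Lambda}_{0}$). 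Note in particular that your expectation of recovering ``exactly $\overline{\Lambda}_{0}$'' is misplaced: what is needed, and what this cruder bookkeeping delivers, is a threshold at least $\overline{\Lambda}_{0}$. So the route differs from the paper's (aggregated two-equation elimination versus the paper's explicit $(\theta,s,t,w)$ parametrization with the constraint $(\ref{2-12})$), and is arguably simpler, but as submitted the decisive estimate is missing and the reduction through $(\ref{4-4})$ and Lemma \ref{g7} is invalid for $\mu_{ii}\in[\Lambda_{0},\overline{\Lambda}_{0})$.
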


\begin{proof}
Let $\left( u_{0},v_{0}\right) \in \mathbb{A}$ be a nontrivial solution of
System $\left( E\right) .$ Then $\left( u_{0},v_{0}\right) $ satisfies the
Nehari identity:%
\begin{equation}
\left\Vert \left( u_{0},v_{0}\right) \right\Vert _{H}^{2}+\int_{\mathbb{R}%
^{3}}\mu _{11}\phi _{_{u_{0}}}u_{0}^{2}+\mu _{22}\phi
_{_{v_{0}}}v_{0}^{2}-2\mu _{12}\phi _{_{v_{0}}}u_{0}^{2}dx-\frac{1}{2\pi }%
\int_{\mathbb{R}^{3}}\int_{0}^{2\pi }\left\vert tu_{0}+e^{i\theta
}tv_{0}\right\vert ^{p+1}d\theta dx=0.  \label{2-6}
\end{equation}%
Following the argument of \cite[Lemma 3.1]{DM1}, it is not difficult to
verify that solution $\left( u_{0},v_{0}\right) $ also satisfies the
following Pohozaev type identity:%
\begin{eqnarray}
&&\frac{1}{2}\left( \int_{\mathbb{R}^{3}}\left\vert \nabla u_{0}\right\vert
^{2}dx+\int_{\mathbb{R}^{3}}\left\vert \nabla v_{0}\right\vert ^{2}dx\right)
+\frac{3}{2}\left( \int_{\mathbb{R}^{3}}\lambda u_{0}^{2}dx+\int_{\mathbb{R}%
^{3}}\lambda v_{0}^{2}dx\right)  \notag \\
&&+\frac{5}{4}\int_{\mathbb{R}^{3}}\mu _{11}\phi _{_{u_{0}}}u_{0}^{2}+\mu
_{22}\phi _{_{v_{0}}}v_{0}^{2}-2\mu _{12}\phi _{_{v_{0}}}u_{0}^{2}dx  \notag
\\
&=&\frac{3}{2\pi \left( p+1\right) }\int_{\mathbb{R}^{3}}\int_{0}^{2\pi
}\left\vert tu_{0}+e^{i\theta }tv_{0}\right\vert ^{p+1}d\theta dx.
\label{2-7}
\end{eqnarray}%
Assume that%
\begin{eqnarray}
J\left( u_{0},v_{0}\right) &=&\frac{1}{2}\left\Vert \left(
u_{0},v_{0}\right) \right\Vert _{H}^{2}+\frac{1}{4}\int_{\mathbb{R}^{3}}\mu
_{11}\phi _{_{u_{0}}}u_{0}^{2}+\mu _{22}\phi _{_{v_{0}}}v_{0}^{2}-2\mu
_{12}\phi _{_{v_{0}}}u_{0}^{2}dx  \notag \\
&&-\frac{1}{2\pi \left( p+1\right) }\int_{\mathbb{R}^{3}}\int_{0}^{2\pi
}\left\vert tu_{0}+e^{i\theta }tv_{0}\right\vert ^{p+1}d\theta dx  \notag \\
&=&\theta .  \label{2-13}
\end{eqnarray}%
Using $\left( \ref{2-6}\right) -\left( \ref{2-13}\right) $, we have%
\begin{eqnarray*}
\theta &=&\frac{p-1}{2\left( p+1\right) }\left( \int_{\mathbb{R}%
^{3}}\left\vert \nabla u_{0}\right\vert ^{2}dx+\int_{\mathbb{R}%
^{3}}\left\vert \nabla v_{0}\right\vert ^{2}dx\right) +\frac{p-1}{2\left(
p+1\right) }\left( \int_{\mathbb{R}^{3}}\lambda u_{0}^{2}dx+\int_{\mathbb{R}%
^{3}}\lambda v_{0}^{2}dx\right) \\
&&-\frac{3-p}{4\left( p+1\right) }\int_{\mathbb{R}^{3}}\mu _{11}\phi
_{_{u_{0}}}u_{0}^{2}+\mu _{22}\phi _{_{v_{0}}}v_{0}^{2}-2\mu _{12}\phi
_{_{v_{0}}}u_{0}^{2}dx
\end{eqnarray*}%
and%
\begin{eqnarray*}
\int_{\mathbb{R}^{3}}\left\vert \nabla u_{0}\right\vert ^{2}dx+\int_{\mathbb{%
R}^{3}}\left\vert \nabla v_{0}\right\vert ^{2}dx &=&\frac{3(p-1)}{5-p}\left(
\int_{\mathbb{R}^{3}}\lambda u_{0}^{2}dx+\int_{\mathbb{R}^{3}}\lambda
v_{0}^{2}dx\right) \\
&&+\frac{5p-7}{2\left( 5-p\right) }\int_{\mathbb{R}^{3}}\mu _{11}\phi
_{_{u_{0}}}u_{0}^{2}+\mu _{22}\phi _{_{v_{0}}}v_{0}^{2}-2\mu _{12}\phi
_{_{v_{0}}}u_{0}^{2}dx,
\end{eqnarray*}%
which in turn gives%
\begin{equation}
\theta =\frac{p-1}{5-p}\left( \int_{\mathbb{R}^{3}}\lambda u_{0}^{2}dx+\int_{%
\mathbb{R}^{3}}\lambda v_{0}^{2}dx\right) +\frac{p-2}{5-p}\int_{\mathbb{R}%
^{3}}\mu _{11}\phi _{_{u_{0}}}u_{0}^{2}+\mu _{22}\phi
_{_{v_{0}}}v_{0}^{2}-2\mu _{12}\phi _{_{v_{0}}}u_{0}^{2}dx.  \label{2-9}
\end{equation}%
$\left( i\right) $ Since $2<p<3$  and $\mu
_{11}\mu _{22}-\mu _{12}^{2}\geq 0,$ equation $\left( \ref{2-9}\right) $ gives%
\begin{eqnarray}
\theta &=&\frac{p-1}{5-p}\left( \int_{\mathbb{R}^{3}}\lambda
u_{0}^{2}dx+\int_{\mathbb{R}^{3}}\lambda v_{0}^{2}dx\right) +\frac{p-2}{5-p}%
\int_{\mathbb{R}^{3}}\mu _{11}\phi _{_{u_{0}}}u_{0}^{2}+\mu _{22}\phi
_{_{v_{0}}}v_{0}^{2}-2\mu _{12}\phi _{_{v_{0}}}u_{0}^{2}dx  \notag \\
&\geq &\frac{p-1}{5-p}\left( \int_{\mathbb{R}^{3}}\lambda u_{0}^{2}dx+\int_{%
\mathbb{R}^{3}}\lambda v_{0}^{2}dx\right) >0;  \label{2-9-1}
\end{eqnarray}%
here we have used the inequality,%
\begin{equation*}
\int_{\mathbb{R}^{3}}\phi _{_{v_{0}}}u_{0}^{2}dx\leq \frac{\mu _{11}}{2\mu
_{12}}\int_{\mathbb{R}^{3}}\phi _{_{u_{0}}}u_{0}^{2}+\frac{\mu _{12}}{2\mu
_{11}}\int_{\mathbb{R}^{3}}\phi _{_{v_{0}}}v_{0}^{2}.
\end{equation*}%
Moreover, using Hardy-Littlewood-Sobolev and Gagliardo-Nirenberg
inequalities, we have,%
\begin{equation}
\int_{\mathbb{R}^{3}}\phi _{u_{0}}u_{0}^{2}dx\leq \frac{16}{3\sqrt{3}\pi
\lambda ^{\frac{3}{2}}}\left( \int_{\mathbb{R}^{3}}\lambda
u_{0}^{2}dx\right) ^{\frac{3}{2}}\left( \int_{\mathbb{R}^{3}}|\nabla
u_{0}|^{2}dx\right) ^{\frac{1}{2}}  \label{2-8-1}
\end{equation}%
and%
\begin{equation}
\int_{\mathbb{R}^{3}}\phi _{v_{0}}v_{0}^{2}dx\leq \frac{16}{3\sqrt{3}\pi
\lambda ^{\frac{3}{2}}}\left( \int_{\mathbb{R}^{3}}\lambda
v_{0}^{2}dx\right) ^{\frac{3}{2}}\left( \int_{\mathbb{R}^{3}}|\nabla
v_{0}|^{2}dx\right) ^{\frac{1}{2}}.  \label{2-8-2}
\end{equation}%
We now rewrite $\left( \ref{2-6}\right) -\left( \ref{2-13}\right) $ using the following notations,%
\begin{equation*}
\begin{array}{ll}
z_{1}=\int_{\mathbb{R}^{3}}|\nabla u_{0}|^{2}dx+\int_{\mathbb{R}^{3}}|\nabla
v_{0}|^{2}dx, & z_{2}=\int_{\mathbb{R}^{3}}\lambda u_{0}^{2}dx+\int_{\mathbb{%
R}^{3}}\lambda v_{0}^{2}dx, \\
z_{3}=\int_{\mathbb{R}^{3}}\phi _{u_{0}}u_{0}^{2}dx, & z_{4}=\int_{\mathbb{R}%
^{3}}\phi _{v_{0}}v_{0}^{2}dx, \\
z_{5}=\int_{\mathbb{R}^{3}}\phi _{v_{0}}u_{0}^{2}dx, & z_{6}=\frac{1}{2\pi }%
\int_{\mathbb{R}^{3}}\int_{0}^{2\pi }\left\vert tu_{0}+e^{i\theta
}tv_{0}\right\vert ^{p+1}d\theta dx,%
\end{array}%
\end{equation*}%
and arrive at the linear system,%
\begin{equation}
\left\{
\begin{array}{l}
\frac{1}{2}z_{1}+\frac{1}{2}z_{2}+\frac{\mu _{11}}{4}z_{3}+\frac{\mu _{22}}{4%
}z_{4}-\frac{\mu _{12}}{2}z_{5}-\frac{1}{p+1}z_{6}=\theta , \\
z_{1}+z_{2}+\mu _{11}z_{3}+\mu _{22}z_{4}-2\mu _{12}z_{5}-z_{6}=0, \\
\frac{1}{2}z_{1}+\frac{3}{2}z_{2}+\frac{5\mu _{11}}{4}z_{3}+\frac{5\mu _{22}%
}{4}z_{4}-\frac{5\mu _{12}}{2}z_{5}-\frac{3}{p+1}z_{6}=0, \\
z_{i}>0\text{ for }i=1,2,3,4,5,6.%
\end{array}%
\right.  \label{4-10}
\end{equation}%
Furthermore, using $\left( \ref{2-8-1}\right)$ and $\left( \ref{2-8-2}\right) $, we
have a constraint
\begin{equation}
z_{3}^{2}+z_{4}^{2}\leq \frac{256}{27\pi ^{2}\lambda ^{3}}z_{2}^{3}z_{1}.
\label{2-12}
\end{equation}%
Next, we want to show that%
\begin{equation}
-\left( p-1\right) \left( z_{1}+z_{2}\right) +\left( 3-p\right) \mu
_{11}z_{3}+\left( 3-p\right) \mu _{22}z_{4}-2\left( 3-p\right) \mu
_{12}z_{5}<0, \text{ for all }\mu _{ii}\in \left( 0,\overline{\Lambda }%
_{0}\right) .  \label{c1}
\end{equation}%
The general solution of the linear system $\left( \ref{4-10}\right) $
is given by
\begin{equation}
\left[
\begin{array}{c}
z_{1} \\
z_{2} \\
z_{3} \\
z_{4} \\
z_{5} \\
z_{6}%
\end{array}%
\right] =\theta \left[
\begin{array}{c}
1 \\
3 \\
0 \\
0 \\
\frac{2}{\mu _{12}} \\
0%
\end{array}%
\right] +s\left[
\begin{array}{c}
0 \\
0 \\
0 \\
\frac{1}{\mu _{22}} \\
\frac{1}{2\mu _{12}} \\
0%
\end{array}%
\right] +t\left[
\begin{array}{c}
0 \\
0 \\
\frac{1}{\mu _{11}} \\
0 \\
\frac{1}{2\mu _{12}} \\
0%
\end{array}%
\right] +w\left[
\begin{array}{c}
p-1 \\
-2(p-2) \\
0 \\
0 \\
\frac{-(p-1)}{\mu _{12}} \\
p+1%
\end{array}%
\right] ,  \label{4-12}
\end{equation}%
where $s,\ t,\ w\in \mathbb{R}$. Since $2<p<3,\, \mu _{ij}>0$ for $i,j=1,2$ and,
from (\ref{4-10}), $z_{i}>0$ for $i=1,\cdots ,6$, this is equivalent to
\begin{equation}
\left\{
\begin{array}{l}
3\theta -2w(p-2)>0, \\
4\theta +s+t-2w(p-1)>0, \\
s>0,\ \ t>0,\ \ w>0.\
\end{array}%
\right.  \label{4-13}
\end{equation}%
To verify $\left( \ref{c1}\right) $ is true, we substitute $\left( \ref{4-12}%
\right) $ into $\left( \ref{c1}\right) $ to give
\begin{equation}
-(p-1)(4\theta +(3-p)w)+(3-p)\left[ t+2(p-1)w+s-2(2\theta +t/2+s/2)\right]
<0.  \label{4-14-1}
\end{equation}%
This leads to
\begin{equation}
w<\frac{8}{(p-1)(3-p)}\theta .  \label{4-14}
\end{equation}%
From $\left( \ref{4-13}\right) $, we have
\begin{equation}
w<\frac{3}{2p-4}\theta .  \label{4-15}
\end{equation}%
This is a necessary condition for guaranteeing $z_{2}>0$. Note that,
comparing $\left( \ref{4-14}\right) $ and $\left( \ref{4-15}\right) $, we
obtain that if $\frac{\sqrt{73}-2}{3}\leq p<3$ and $\left( \ref{4-15}\right)
$ holds, then the condition $\left( \ref{4-14-1}\right) $ is satisfied, meaning that
$\left( \ref{c1}\right) $ is true for all $\mu _{ij}>0.$ Let $%
c_{\lambda }=\frac{256}{27\pi ^{2}\lambda ^{3}}.$ Applying the constraint $\left( %
\ref{2-12}\right) $, we substitute $\left( \ref{2-9-1}\right)$ and $\left( \ref%
{4-12}\right) $ into $\left( \ref{2-12}\right) $ to obtain
\begin{equation}
\frac{1}{\mu _{11}^{2}}t^{2}+\frac{1}{\mu _{22}^{2}}s^{2}\leq c_{\lambda
}\left( \frac{5-p}{p-1}\theta \right) ^{3}\left( \theta +(p-1)w\right) .
\label{4-16}
\end{equation}%
From the second inequality of $\left( \ref{4-13}\right) $, we have
\begin{equation}
w<\frac{2}{(p-1)}\theta +\frac{1}{2(p-1)}(s+t).  \label{4-17}
\end{equation}%
This is then used in $\left( \ref{4-16}\right) $ to give
\begin{equation*}
\frac{1}{\mu _{11}^{2}}t^{2}+\frac{1}{\mu _{22}^{2}}s^{2}<c_{\lambda }\left(
\frac{5-p}{p-1}\theta \right) ^{3}\left( 3\theta +\frac{1}{2}(s+t)\right) .
\end{equation*}%
Moreover, if $\mu _{ii}\in (0,\overline{\Lambda }_{0})$, then
\begin{equation}
t^{2}+s^{2}-\frac{c_{\lambda }}{2}\left( \frac{5-p}{p-1}\right) ^{3}%
\overline{\Lambda }_{0}^{2}\theta ^{3}(s+t)-3c_{\lambda }\left( \frac{5-p}{%
p-1}\right) ^{3}\overline{\Lambda }_{0}^{2}\theta ^{4}<0.  \label{4-19}
\end{equation}%
Since $t^{2}+s^{2}\geq \frac{1}{2}\left( s+t\right) ^{2}$, we have
\begin{equation}
\left( s+t\right) ^{2}-\overline{\Lambda }_{0}^{2}c_{\lambda }\left( \frac{%
5-p}{p-1}\right) ^{3}\theta ^{3}(s+t)-6\overline{\Lambda }_{0}^{2}c_{\lambda
}\left( \frac{5-p}{p-1}\right) ^{3}\theta ^{4}<0.  \label{4-20}
\end{equation}%
Substituting $\overline{\Lambda }_{0}>0$ and $c_{\lambda }=\frac{256}{27\pi
^{2}\lambda ^{3}}$ into $\left( \ref{4-20}\right) $, and since $\theta <D_{0},$ we
can conclude that
\begin{equation*}
l^{2}-\frac{(p+1)^{2}}{(3-p)(5-p)}\theta l-\frac{6(p+1)^{2}}{(3-p)(5-p)}%
\theta ^{2}<0,\text{ where }l=s+t.
\end{equation*}%
Since $l$ is positive, we must have
\begin{equation}
0<l<l_{0},  \label{4-21}
\end{equation}%
where
\begin{align*}
l_{0}& =\frac{(p+1)^{2}\theta }{(3-p)(5-p)}+\theta \sqrt{\frac{(p+1)^{4}}{%
(3-p)^{2}(5-p)^{2}}+\frac{24(p+1)^{2}}{(3-p)(5-p)}} \\
& =\frac{(p+1)^{2}\theta }{(3-p)(5-p)}+\frac{(p+1)\left( 19-5p\right)
\theta }{(3-p)(5-p)} \\
& =\frac{4(p+1)\theta }{3-p}.
\end{align*}%
Thus, for $\mu _{ii}\in (0,\overline{\Lambda }_{0})$ and $2<p<\frac{\sqrt{73}%
-2}{3},$  it follows from $\left( \ref{4-21}\right) $ that
\begin{equation*}
0<s+t<\frac{4(p+1)\theta }{3-p},
\end{equation*}%
implying
\begin{equation*}
w<\frac{2\theta }{(p-1)}+\frac{2(p+1)\theta }{(p-1)(3-p)}=\frac{8\theta }{%
(p-1)(3-p)}.
\end{equation*}%
This shows that the inequality $\left( \ref{c1}\right) $ holds. Therefore, $%
\mathbb{A}\subset \mathbf{M}^{-}.$

\vspace{2mm}
\noindent $\left( ii\right) $ Since $p=2,$ by $\left( \ref{2-9}\right) ,$%
\begin{equation}
\theta =\frac{p-1}{5-p}\left( \int_{\mathbb{R}^{3}}\lambda u_{0}^{2}dx+\int_{%
\mathbb{R}^{3}}\lambda v_{0}^{2}dx\right) >0  \label{2-9-2}.
\end{equation}%
Then, by $\left( \ref{2-9-2}\right) $ and using the argument similar to that in part $%
\left( i\right) ,$ we obtain that%
\begin{equation*}
-\left( p-1\right) \left( z_{1}+z_{2}\right) +\left( 3-p\right) \mu
_{11}z_{3}+\left( 3-p\right) \mu _{22}z_{4}-2\left( 3-p\right) \mu
_{12}z_{5}<0,\,\text{ for all }\mu _{ii}\in \left( 0,\overline{\Lambda }%
_{0}\right) .
\end{equation*}%
This completes the proof.
\end{proof}

\begin{remark}
Clearly, $\overline{\Lambda }_{0}>\Lambda _{0},\,$ for all $2\leq p\leq \frac{%
\sqrt{73}-2}{3}.$
\end{remark}

\textbf{We are now ready to prove Theorem \ref{t2}:} $\left( i\right) $ For $%
\mu _{ii}\in \left( 0,\Lambda _{0}\right) .$ Since $\overline{\Lambda }%
_{0}>\Lambda _{0},$ by Theorem \ref{t1}, System $(E)$ has a vectorial
solution $\left( u_{0}^{\left( 1\right) },v_{0}^{\left( 1\right) }\right)
\in \mathbf{M}^{\left( 1\right) }$ with%
\begin{equation*}
J\left( u_{0}^{\left( 1\right) },v_{0}^{\left( 1\right) }\right) =\alpha
^{-}=\inf_{u\in \mathbf{M}^{-}}J\left( u,v\right) <\beta _{\mu
_{ii}}^{\infty }.
\end{equation*}%
Since $\overline{\Lambda }_{0}>\Lambda _{0},$ $\alpha ^{-}<D_{0}$ for $\mu
_{ii}\in \left( 0,\Lambda _{0}\right) $ and $\mu _{11}\mu _{22}-\mu
_{12}^{2}\geq 0,$ by Proposition \ref{t6}, we can conclude that
\begin{equation*}
J\left( u_{0}^{\left( 1\right) },v_{0}^{\left( 1\right) }\right) =\alpha
^{-}=\inf_{u\in \mathbb{A}}J\left( u,v\right) ,
\end{equation*}%
which implies that $\left( u_{0}^{\left( 1\right) },v_{0}^{\left( 1\right)
}\right) $ is a positive ground state solution of System $(E).$ \newline
$\left( ii\right) $ The proof is similar to that of part $(i)$ and is therefore omitted here.

\section{Nonexistence of nontrivial solutions}

\bigskip

\textbf{We are now ready to prove Theorem \ref{T6}. }Suppose that $\left(
u,v\right) \in H$ is a notrivial solution of System $(E).$ Then
\begin{equation}
\left\Vert \left( u,v\right) \right\Vert _{H}^{2}+\int_{\mathbb{R}^{3}}\mu
_{11}\phi _{_{u}}u^{2}+\mu _{22}\phi _{_{v}}v^{2}-2\mu _{12}\phi
_{_{v}}u^{2}dx-\frac{1}{2\pi }\int_{\mathbb{R}^{3}}\int_{0}^{2\pi
}\left\vert u+e^{i\theta }v\right\vert ^{p+1}d\theta dx=0  \label{6-1-1}
\end{equation}%
or%
\begin{eqnarray}
0 &=&\int_{\mathbb{R}^{3}}\left\vert \nabla u\right\vert ^{2}+\left\vert
\nabla v\right\vert ^{2}+\lambda _{1}u^{2}+\lambda _{2}v^{2}dx+\int_{\mathbb{%
R}^{3}}\mu _{11}\phi _{_{u}}u^{2}+\mu _{22}\phi _{_{v}}v^{2}-2\mu _{12}\phi
_{_{v}}u^{2}dx  \notag \\
&&-\frac{1}{2\pi }\int_{\mathbb{R}^{3}}\int_{0}^{2\pi }\left( u^{2}+2uv\cos
\theta +v^{2}\right) ^{\frac{p+1}{2}}d\theta dx  \label{6-1-2}
\end{eqnarray}%
We note that%
\begin{equation*}
\frac{1}{2\pi }\int_{0}^{2\pi }\left( u^{2}+2uv\cos \theta +v^{2}\right) ^{%
\frac{p+1}{2}}d\theta \leq \left( \left\vert u\right\vert +\left\vert
v\right\vert \right) ^{p+1}\leq 2^{p}\left( \left\vert u\right\vert
^{p+1}+\left\vert v\right\vert ^{p+1}\right) .
\end{equation*}%
By the definition of $\phi _{w},$ we have that
\begin{equation*}
\int_{\mathbb{R}^{3}}\phi _{w}u^{2}dx=\int_{\mathbb{R}^{3}}\left\vert \nabla
\phi _{w}\right\vert ^{2}dx\text{ for }w=u,v.
\end{equation*}%
Moreover, by inequality $\left( 13\right) $ in \cite{Jin},%
\begin{equation*}
\mu _{11}\left( \mu _{11}\left\vert \nabla \phi _{u}\right\vert ^{2}+\mu
_{22}\left\vert \nabla \phi _{v}\right\vert ^{2}-2\mu _{12}\nabla \phi
_{u}\cdot \nabla \phi _{v}\right) \geq \left( \mu _{11}\mu _{22}-\mu
_{12}^{2}\right) \left\vert \nabla \phi _{v}\right\vert ^{2}
\end{equation*}%
and%
\begin{equation*}
\mu _{22}\left( \mu _{11}\left\vert \nabla \phi _{u}\right\vert ^{2}+\mu
_{22}\left\vert \nabla \phi _{v}\right\vert ^{2}-2\mu _{12}\nabla \phi
_{u}\cdot \nabla \phi _{v}\right) \geq \left( \mu _{11}\mu _{22}-2\mu
_{12}^{2}\right) \left\vert \nabla \phi _{u}\right\vert ^{2},
\end{equation*}%
this implies that
\begin{equation}
\mu _{11}\left\vert \nabla \phi _{u}\right\vert ^{2}+\mu _{22}\left\vert
\nabla \phi _{v}\right\vert ^{2}-2\mu _{12}\nabla \phi _{u}\cdot \nabla \phi
_{v}\geq \frac{\mu _{11}\mu _{22}-\mu _{12}^{2}}{\mu _{11}+\mu _{22}}\left(
\left\vert \nabla \phi _{u}\right\vert ^{2}+\left\vert \nabla \phi
_{v}\right\vert ^{2}\right) .  \label{6-2-1}
\end{equation}%
On the other hand, we deduce that%
\begin{eqnarray}
2\left( \frac{\mu _{11}\mu _{22}-\mu _{12}^{2}}{\mu _{11}+\mu _{22}}\right)
^{1/2}\int_{\mathbb{R}^{3}}\left\vert w\right\vert ^{3}dx &=&2\left( \frac{%
\mu _{11}\mu _{22}-\mu _{12}^{2}}{\mu _{11}+\mu _{22}}\right) ^{1/2}\int_{%
\mathbb{R}^{3}}\left( -\Delta \phi _{_{w}}\right) \left\vert w\right\vert dx
\notag \\
&=&2\left( \frac{\mu _{11}\mu _{22}-2\mu _{12}^{2}}{\mu _{11}+\mu _{22}}%
\right) ^{1/2}\int_{\mathbb{R}^{3}}\nabla \phi _{_{w}}\cdot \nabla
\left\vert w\right\vert dx  \notag \\
&\leq &\int_{\mathbb{R}^{3}}\left\vert \nabla w\right\vert ^{2}dx+\frac{\mu
_{11}\mu _{22}-\mu _{12}^{2}}{\mu _{11}+\mu _{22}}\int_{\mathbb{R}%
^{3}}\left\vert \nabla \phi _{w}\right\vert ^{2}dx  \notag \\
&=&\int_{\mathbb{R}^{3}}\left\vert \nabla w\right\vert ^{2}dx+\frac{\mu
_{11}\mu _{22}-\mu _{12}^{2}}{\mu _{11}+\mu _{22}}\int_{\mathbb{R}^{3}}\phi
_{_{w}}w^{2}dx.  \label{6-2-2}
\end{eqnarray}%
Thus, by $\left( \ref{6-1-2}\right) -\left( \ref{6-2-2}\right) ,$ we can
conclude that%
\begin{eqnarray}
0 &=&\int_{\mathbb{R}^{3}}\left\vert \nabla u\right\vert ^{2}+\left\vert
\nabla v\right\vert ^{2}+\lambda u^{2}+\lambda v^{2}dx+\int_{\mathbb{R}%
^{3}}\mu _{11}\phi _{_{u}}u^{2}+\mu _{22}\phi _{_{v}}v^{2}-2\mu _{12}\phi
_{_{v}}u^{2}dx  \notag \\
&&-\frac{1}{2\pi }\int_{\mathbb{R}^{3}}\int_{0}^{2\pi }\left( u^{2}+2uv\cos
\theta +v^{2}\right) ^{\frac{p+1}{2}}d\theta dx  \notag \\
&\geq &\int_{\mathbb{R}^{3}}\left\vert \nabla u\right\vert ^{2}+\left\vert
\nabla v\right\vert ^{2}+\lambda u^{2}+\lambda v^{2}dx+\frac{\mu _{11}\mu
_{22}-\mu _{12}^{2}}{\mu _{11}+\mu _{22}}\left( \int_{\mathbb{R}^{3}}\phi
_{_{u}}u^{2}dx+\int_{\mathbb{R}^{3}}\phi _{_{v}}v^{2}dx\right)   \notag \\
&&-2^{p}\int_{\mathbb{R}^{3}}\left\vert u\right\vert ^{p+1}dx-2^{p}\int_{%
\mathbb{R}^{3}}\left\vert v\right\vert ^{p+1}dx  \notag \\
&\geq &\int_{\mathbb{R}^{3}}u^{2}\left( \lambda -2^{p}\left\vert
u\right\vert ^{p-1}+2\left( \frac{\mu _{11}\mu _{22}-\mu _{12}^{2}}{\mu
_{11}+\mu _{22}}\right) ^{1/2}\left\vert u\right\vert \right) dx  \notag \\
&&+\int_{\mathbb{R}^{3}}v^{2}\left( \lambda -2^{p}\left\vert v\right\vert
^{p-1}+2\left( \frac{\mu _{11}\mu _{22}-\mu _{12}^{2}}{\mu _{11}+\mu _{22}}%
\right) ^{1/2}\left\vert v\right\vert \right) dx.  \label{6-3}
\end{eqnarray}%
If $1<p<2,$ then by Lemma \ref{L2-0} and $\left( \ref{6-3}\right) $, we can
conclude that $u=v\equiv 0$ for all $\lambda ,\mu _{ij}>0$ with
\begin{equation*}
\frac{\mu _{11}\mu _{22}-\mu _{12}^{2}}{\mu _{11}+\mu _{22}}>\frac{\left(
p-1\right) ^{2}}{4}\left[ \frac{2^{p}\left( 2-p\right) ^{2-p}}{\lambda ^{2-p}%
}\right] ^{2/\left( p-1\right) }.
\end{equation*}%
If $p=2,$ then by $\left( \ref{6-3}\right) $, we can conclude that $%
u=v\equiv 0$ for all $\lambda ,\mu _{ij}>0$ with%
\begin{equation*}
\frac{\mu _{11}\mu _{22}-\mu _{12}^{2}}{\mu _{11}+\mu _{22}}>4.
\end{equation*}%
This completes the proof.

\section{Existence of two positive solutions}

Following the idea in \cite{R1} and inequality $\left( \ref{6-2-1}\right) $,
we study the existence of two positive solutions of System $(E)$ for $1<p<2$
and $\mu _{11}\mu _{22}-\mu _{12}^{2}>0.$ Then we have the following result.

\begin{proposition}
\label{p6}Suppose that $1<p<2$ and $\mu _{ij}>0.$ If $\mu _{11}\mu _{22}-\mu
_{12}^{2}>0,$ then we have\newline
$\left( i\right) $ $J$ is bounded from below and coercive in $H_{r};$\newline
$\left( ii\right) $ $J$ satisfies $\left( PS\right) $ condition in $H_{r}.$
\end{proposition}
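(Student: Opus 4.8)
\noindent\textbf{Proof proposal for Proposition \ref{p6}.} The plan is to exploit the hypothesis $\mu_{11}\mu_{22}-\mu_{12}^{2}>0$, which by $(\ref{6-2-1})$ forces the quartic Coulomb form to be nonnegative and in fact to control a positive multiple of $\int_{\mathbb{R}^{3}}\phi_{_{u}}u^{2}+\phi_{_{v}}v^{2}dx$; combined with a Gagliardo--Nirenberg inequality involving the Coulomb energy (in the spirit of \cite{R1}), this will be enough to dominate the subcritical term $\int_{\mathbb{R}^{3}}\int_{0}^{2\pi}|u+e^{i\theta}v|^{p+1}d\theta dx$ in the range $1<p<2$.

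For $(i)$ I would first record that, since $\int_{\mathbb{R}^{3}}\phi_{_{v}}u^{2}dx=\int_{\mathbb{R}^{3}}\nabla\phi_{u}\cdot\nabla\phi_{v}\,dx$ and $\int_{\mathbb{R}^{3}}\phi_{_{w}}w^{2}dx=\int_{\mathbb{R}^{3}}|\nabla\phi_{w}|^{2}dx$, inequality $(\ref{6-2-1})$ gives $\int_{\mathbb{R}^{3}}\mu_{11}\phi_{_{u}}u^{2}+\mu_{22}\phi_{_{v}}v^{2}-2\mu_{12}\phi_{_{v}}u^{2}dx\geq c_{0}\int_{\mathbb{R}^{3}}\phi_{_{u}}u^{2}+\phi_{_{v}}v^{2}dx\geq 0$ with $c_{0}=\frac{\mu_{11}\mu_{22}-\mu_{12}^{2}}{\mu_{11}+\mu_{22}}>0$, whence, using $\frac{1}{2\pi}\int_{0}^{2\pi}|u+e^{i\theta}v|^{p+1}d\theta\leq(|u|+|v|)^{p+1}\leq 2^{p}(|u|^{p+1}+|v|^{p+1})$,
\begin{equation*}
J(u,v)\geq \frac{1}{2}\left\Vert (u,v)\right\Vert _{H}^{2}+\frac{c_{0}}{4}\int_{\mathbb{R}^{3}}\phi_{_{u}}u^{2}+\phi_{_{v}}v^{2}dx-\frac{2^{p}}{p+1}\Big(\int_{\mathbb{R}^{3}}|u|^{p+1}dx+\int_{\mathbb{R}^{3}}|v|^{p+1}dx\Big).
\end{equation*}
Next I would interpolate $\left\Vert u\right\Vert _{L^{p+1}}$ between $\left\Vert u\right\Vert _{L^{2}}$ and $\left\Vert u\right\Vert _{L^{3}}$ (legitimate since $2<p+1<3$) and invoke the Ruiz-type estimate $\int_{\mathbb{R}^{3}}|u|^{3}dx\leq C\left(\int_{\mathbb{R}^{3}}|\nabla u|^{2}dx\right)^{1/2}\left(\int_{\mathbb{R}^{3}}\phi_{_{u}}u^{2}dx\right)^{1/2}$ (cf. \cite{R1}) to obtain a constant $C_{p}>0$ with
\begin{equation*}
\int_{\mathbb{R}^{3}}|u|^{p+1}dx\leq C_{p}\left\Vert u\right\Vert _{L^{2}}^{2(2-p)}\left(\int_{\mathbb{R}^{3}}|\nabla u|^{2}dx\right)^{(p-1)/2}\left(\int_{\mathbb{R}^{3}}\phi_{_{u}}u^{2}dx\right)^{(p-1)/2}.
\end{equation*}
The three exponents add up to $1$; applying Young's inequality first to peel off $\int_{\mathbb{R}^{3}}\phi_{_{u}}u^{2}dx$ and then once more to the residual product $\left\Vert u\right\Vert _{L^{2}}^{4(2-p)/(3-p)}\left(\int_{\mathbb{R}^{3}}|\nabla u|^{2}dx\right)^{(p-1)/(3-p)}$ --- whose exponent $\tfrac{p-1}{3-p}$ on $\int|\nabla u|^{2}$ is strictly less than $1$ exactly because $p<2$ --- and adding the analogous bound for $v$, I expect to arrive, for a suitable choice of weights, at constants $\theta_{1},\theta_{2}>0$ and $C_{1}>0$ (depending only on $p,\lambda,\mu_{ij}$) with $J(u,v)\geq \theta_{1}\left\Vert (u,v)\right\Vert _{H}^{2}+\theta_{2}\int_{\mathbb{R}^{3}}\phi_{_{u}}u^{2}+\phi_{_{v}}v^{2}dx-C_{1}$ for all $(u,v)\in H$. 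This is simultaneously a lower bound and a coercivity estimate, a fortiori on $H_{r}$.

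For $(ii)$, given $\{(u_{n},v_{n})\}\subset H_{r}$ with $J(u_{n},v_{n})$ bounded and $J'|_{H_{r}}(u_{n},v_{n})\to 0$, the principle of symmetric criticality (the problem being rotation invariant and $\phi_{u}$ radial whenever $u$ is) yields $J'(u_{n},v_{n})\to 0$ in $H^{-1}$; by $(i)$ the sequence is bounded in $H_{r}$, so up to a subsequence $(u_{n},v_{n})\rightharpoonup(u_{0},v_{0})$ in $H_{r}$ and, by the compact embedding $H_{r}^{1}(\mathbb{R}^{3})\hookrightarrow L^{q}(\mathbb{R}^{3})$ for $2<q<6$ (Strauss), $(u_{n},v_{n})\to(u_{0},v_{0})$ strongly in $L^{q}\times L^{q}$ for every such $q$ and a.e. By Hardy--Littlewood--Sobolev, strong $L^{12/5}$ convergence forces $\int_{\mathbb{R}^{3}}\phi_{_{u_{n}}}u_{n}^{2}dx\to\int_{\mathbb{R}^{3}}\phi_{_{u_{0}}}u_{0}^{2}dx$ and likewise for the remaining Coulomb terms and their derivatives, while $p+1<6$ makes the $\theta$-averaged nonlinear term and its derivative converge as well. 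Passing to the limit in $\langle J'(u_{n},v_{n}),(\varphi,\psi)\rangle\to 0$ for fixed $(\varphi,\psi)$ shows $J'(u_{0},v_{0})=0$; then testing $J'(u_{n},v_{n})$ against $(u_{n},v_{n})$ and against $(u_{0},v_{0})$, subtracting, and using the convergence of every lower-order (Coulomb and nonlinear) term gives $\left\Vert (u_{n},v_{n})\right\Vert _{H}^{2}\to\left\Vert (u_{0},v_{0})\right\Vert _{H}^{2}$, which together with weak convergence forces $(u_{n},v_{n})\to(u_{0},v_{0})$ strongly in $H$, i.e. $(PS)$ holds.

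The main obstacle lies entirely in $(i)$: producing the Coulomb-assisted Gagliardo--Nirenberg bound in a form whose constants can actually be balanced against $\tfrac12$, $\tfrac{c_{0}}{4}$ and $\lambda$ so that the nonlinear term is genuinely absorbed --- this is the step where the hypothesis $\det(\mu_{ij})>0$ (through $c_{0}>0$) and the $\lambda$-scaling of the system are essential, and where the restriction $1<p<2$ (rather than $1<p<3$) is unavoidable, since for $p\ge 2$ concentrating profiles drive $J$ to $-\infty$. Once $(i)$ is established, part $(ii)$ is routine: the radial compactness turns the Coulomb and $\theta$-averaged terms into compact perturbations of the quadratic part, and one finishes with the standard weak-limit together with norm-convergence argument (cf. \cite{BLi}).
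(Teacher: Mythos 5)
Your opening reductions in (i) are the same as the paper's: the hypothesis $\mu_{11}\mu_{22}-\mu_{12}^{2}>0$ is used through $(\ref{6-2-1})$ to bound the Coulomb form below by $c_{0}\int_{\mathbb{R}^{3}}(\phi_{u}u^{2}+\phi_{v}v^{2})dx$ with $c_{0}=\frac{\mu_{11}\mu_{22}-\mu_{12}^{2}}{\mu_{11}+\mu_{22}}$, and the $\theta$-average is controlled pointwise as in $(\ref{7-2})$. The gap is in your absorption step. Your interpolation gives $\int|u|^{p+1}dx\le C_{p}A^{2-p}B^{(p-1)/2}D^{(p-1)/2}$ with $A=\|u\|_{L^{2}}^{2}$, $B=\int|\nabla u|^{2}dx$, $D=\int\phi_{u}u^{2}dx$, and, as you note, the exponents sum to exactly $1$. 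That is the obstruction, not a convenience: a product that is jointly homogeneous of degree one in $(A,B,D)$ cannot be bounded by $\varepsilon(A+B+D)+C_{\varepsilon}$, since both sides scale linearly along rays; Young/AM--GM only lets you trade coefficients among $A$, $B$, $D$. Concretely, after your first Young step the residual $A^{2(2-p)/(3-p)}B^{(p-1)/(3-p)}$ is again of total degree one, so the second step yields only $\varepsilon B+C_{\varepsilon}A$ with $C_{\varepsilon}\to\infty$ as $\varepsilon\to 0$; your final estimate then has $L^{2}$-coefficient $\frac{\lambda}{2}-C_{\varepsilon}$, which is negative unless $\lambda$ and $c_{0}$ are large relative to the fixed interpolation constant. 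Hence your chain proves (i) only under a largeness restriction on $\lambda$ and $\det(\mu_{ij})/(\mu_{11}+\mu_{22})$, whereas the proposition asserts it for all positive parameters. Nor can this be repaired by a different choice of exponents: matching the two-parameter scaling $u\mapsto s\,u(\cdot/t)$ forces the exponents onto a one-parameter line, testing on $N$ widely separated bumps forces total degree $\ge 1$ for the product bound to be true, while absorption with an additive constant would need total degree $<1$; so no bound of $\int|u|^{p+1}dx$ purely in terms of $A$, $B$, $D$ can deliver the unconditional coercivity (your bound is in fact very lossy on moderate-amplitude, large-support profiles, where the functional is saved by a mechanism your estimate does not see).

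The paper avoids exactly this trap: it spends half of the gradient and Coulomb budget to create a cubic term via $(\ref{7-1})$, arriving at $(\ref{7-3})$, whose $u$- and $v$-blocks have the structure $\frac{\lambda}{4}w^{2}+c|w|^{3}-\frac{2^{p}}{p+1}|w|^{p+1}$ plus retained quadratic and Coulomb terms, and then invokes the genuinely finer argument of Ruiz \cite[Theorem 4.3]{R1} (valid precisely for $1<p<2$ and all parameter values) to conclude boundedness below and coercivity on $H_{r}$. If you want a self-contained proof you must reproduce that argument (or an equivalent analysis), not a global Young absorption. Your part (ii) is the standard radial-compactness argument (Strauss embedding, Hardy--Littlewood--Sobolev for the Coulomb terms, weak limit plus norm convergence) and is fine once a correct (i) gives boundedness of $(PS)$ sequences; the paper simply quotes \cite[Proposition 6.1 $(ii)$]{Jin} for this part, so there you are if anything more detailed than the paper, but the proof of (i) as written does not go through.
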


\begin{proof}
$\left( i\right) $ Since $\mu _{ii}>0$ and $\mu _{11}\mu _{22}-\mu
_{12}^{2}>0,$
\begin{eqnarray}
\frac{1}{2\sqrt{2}}\left( \frac{\mu _{11}\mu _{22}-\mu _{12}^{2}}{\mu
_{11}+\mu _{22}}\right) ^{1/2}\int_{\mathbb{R}^{3}}\left\vert w\right\vert
^{3}dx &=&\frac{1}{2\sqrt{2}}\left( \frac{\mu _{11}\mu _{22}-\mu _{12}^{2}}{%
\mu _{11}+\mu _{22}}\right) ^{1/2}\int_{\mathbb{R}^{3}}\left( -\Delta \phi
_{_{w}}\right) \left\vert w\right\vert dx  \notag \\
&=&\frac{1}{2\sqrt{2}}\left( \frac{\mu _{11}\mu _{22}-\mu _{12}^{2}}{\mu
_{11}+\mu _{22}}\right) ^{1/2}\int_{\mathbb{R}^{3}}\nabla \phi _{_{w}}\cdot
\nabla \left\vert w\right\vert dx  \notag \\
&\leq &\frac{1}{4}\int_{\mathbb{R}^{3}}\left\vert \nabla w\right\vert ^{2}dx+%
\frac{\mu _{11}\mu _{22}-\mu _{12}^{2}}{8\left( \mu _{11}+\mu _{22}\right) }%
\int_{\mathbb{R}^{3}}\left\vert \nabla \phi _{w}\right\vert ^{2}dx  \notag \\
&=&\frac{1}{4}\int_{\mathbb{R}^{3}}\left\vert \nabla w\right\vert ^{2}dx+%
\frac{\mu _{11}\mu _{22}-\mu _{12}^{2}}{8\left( \mu _{11}+\mu _{22}\right) }%
\int_{\mathbb{R}^{3}}\phi _{_{w}}w^{2}dx  \label{7-1}
\end{eqnarray}%
for $w=u,v.$ We note that%
\begin{equation}
\frac{1}{2\pi }\int_{0}^{2\pi }\left( u^{2}+2uv\cos \theta +v^{2}\right) ^{%
\frac{p+1}{2}}d\theta \leq \left( \left\vert u\right\vert +\left\vert
v\right\vert \right) ^{p+1}\leq 2^{p}\left( \left\vert u\right\vert
^{p+1}+\left\vert v\right\vert ^{p+1}\right) .  \label{7-2}
\end{equation}%
Then by inequalities $\left( \ref{6-2-1}\right) ,\left( \ref{7-1}\right) $
and $\left( \ref{7-2}\right) ,$%
\begin{eqnarray}
J\left( u,v\right)  &=&\frac{1}{2}\left\Vert \left( u,v\right) \right\Vert
_{H}^{2}+\frac{1}{4}\int_{\mathbb{R}^{3}}\mu _{11}\phi _{_{u}}u^{2}+\mu
_{22}\phi _{_{v}}v^{2}-2\mu _{12}\phi _{_{v}}u^{2}dx  \notag \\
&&-\frac{1}{2\pi \left( p+1\right) }\int_{\mathbb{R}^{3}}\int_{0}^{2\pi
}\left\vert u+e^{i\theta }v\right\vert ^{p+1}d\theta dx  \notag \\
&\geq &\frac{1}{4}\int_{\mathbb{R}^{3}}\left\vert \nabla u\right\vert
^{2}+\lambda u^{2}dx+\frac{\mu _{11}\mu _{22}-\mu _{12}^{2}}{8\left( \mu
_{11}+\mu _{22}\right) }\int_{\mathbb{R}^{3}}\phi _{_{u}}u^{2}dx  \notag \\
&&+\frac{1}{4}\int_{\mathbb{R}^{3}}\lambda u^{2}+\frac{1}{2\sqrt{2}}\left(
\frac{\mu _{11}\mu _{22}-\mu _{12}^{2}}{\mu _{11}+\mu _{22}}\right)
^{1/2}\left\vert u\right\vert ^{3}-\frac{2^{p}}{p+1}\left\vert u\right\vert
^{p+1}dx  \notag \\
&&+\frac{1}{4}\int_{\mathbb{R}^{3}}\left\vert \nabla v\right\vert
^{2}+\lambda v^{2}dx+\frac{\mu _{11}\mu _{22}-\mu _{12}^{2}}{8\left( \mu
_{11}+\mu _{22}\right) }\int_{\mathbb{R}^{3}}\phi _{_{v}}v^{2}dx  \notag \\
&&+\frac{1}{4}\int_{\mathbb{R}^{3}}\lambda v^{2}+\frac{1}{2\sqrt{2}}\left(
\frac{\mu _{11}\mu _{22}-\mu _{12}^{2}}{\mu _{11}+\mu _{22}}\right)
^{1/2}\left\vert v\right\vert ^{3}-\frac{2^{p}}{p+1}\left\vert v\right\vert
^{p+1}dx.  \label{7-3}
\end{eqnarray}%
Thus, by $\left( \ref{7-3}\right) $ and applying the argument in Ruiz \cite[%
Theorem 4.3]{R1}, $J$ is coercive on $H_{r}$ and there exists $M>0$ such that%
\begin{equation*}
\inf_{\left( u,v\right) \in H_{r}}J(u,v)\geq -M.
\end{equation*}%
$\left( ii\right) $ By \cite[Proposition 6.1 $\left( ii\right) $]{Jin}.
\end{proof}

Assume that $w_{r,\mu }^{\left( 1\right) }$ and $w_{r,\mu }^{\left( 2\right)
}$ are positive radial solutions of Equation $\left( SP_{\mu }\right) $ as
in Theorem \ref{T5}, that is%
\begin{equation*}
I_{\mu }\left( w_{r,\mu }^{\left( 1\right) }\right) =\beta _{r,\mu }^{\left(
1\right) }:=\inf_{u\in \mathbf{N}_{\mu }^{-}\cap H_{r}^{1}\left( \mathbb{R}%
^{3}\right) }I_{\mu }\left( u\right) >0
\end{equation*}%
and
\begin{equation*}
I_{\mu }\left( w_{r,\mu }^{\left( 2\right) }\right) =\beta _{r,\mu }^{\left(
2\right) }:=\inf_{u\in \mathbf{N}_{\mu }^{+}\cap H_{r}^{1}\left( \mathbb{R}%
^{3}\right) }I_{\mu }\left( u\right) =\inf_{u\in H_{r}^{1}\left( \mathbb{R}%
^{3}\right) }I_{\mu }\left( u\right) <0.
\end{equation*}%
Then we have the following results.

\begin{lemma}
\label{L6-2}Suppose that $1<p<2$ and $\mu _{ij}>0.$ If $0<\mu _{11}<\Lambda
_{0}$ and $\mu _{11}\mu _{22}-\mu _{12}^{2}>0,$ then we have \newline
$\left( i\right) $ $J\left( \sqrt{s_{\min }}w_{r,\mu _{11}}^{\left( 2\right)
},\sqrt{1-s_{\min }}w_{r,\mu _{11}}^{\left( 2\right) }\right) <I_{\mu
}\left( w_{r,\mu _{11}}^{\left( 2\right) }\right) =\beta _{r,\mu
_{11}}^{\left( 2\right) }<0;$\newline
$\left( ii\right) $ Let $\left( u_{0},v_{0}\right) $ be a critical point of $%
J $ on $\mathbf{M}^{+}\cap H_{r}.$ Then we have $J\left( u_{0},v_{0}\right)
\geq \beta _{r,\mu }^{\left( 2\right) }$ if either $u_{0}=0$ or $v_{0}=0.$
\end{lemma}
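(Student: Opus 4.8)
The plan is to evaluate $J$ directly on a suitable scaled test pair for part $(i)$, and to reduce part $(ii)$ to the scalar equation $\left( SP_{\mu _{11}}\right) $ through semi-triviality, exactly in the spirit of Lemma \ref{l5}. For $(i)$, put $w_{0}:=w_{r,\mu _{11}}^{(2)}$, $u:=\sqrt{s_{\min }}\,w_{0}$ and $v:=\sqrt{1-s_{\min }}\,w_{0}$ with $s_{\min }=\frac{\mu _{22}+\mu _{12}}{\mu _{11}+\mu _{22}+2\mu _{12}}$ as in Lemma \ref{L2-1}. Since $\phi _{cw}=c^{2}\phi _{w}$ one has $\left\Vert \left( u,v\right) \right\Vert _{H}^{2}=\left\Vert w_{0}\right\Vert _{H^{1}}^{2}$, and, exactly as in the computation carried out just before part $(i)$ of the proof of Lemma \ref{g4},
\[
\int_{\mathbb{R}^{3}}\mu _{11}\phi _{u}u^{2}+\mu _{22}\phi _{v}v^{2}-2\mu _{12}\phi _{v}u^{2}\,dx=\frac{\mu _{11}\mu _{22}-\mu _{12}^{2}}{\mu _{11}+\mu _{22}+2\mu _{12}}\int_{\mathbb{R}^{3}}\phi _{w_{0}}w_{0}^{2}\,dx<\mu _{11}\int_{\mathbb{R}^{3}}\phi _{w_{0}}w_{0}^{2}\,dx,
\]
the strict inequality being the bound $g\left( s_{\min }\right) <\mu _{11}$ of Lemma \ref{L2-1}. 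Moreover $\left\vert u+e^{i\theta }v\right\vert ^{2}=\left( 1+2\sqrt{s_{\min }\left( 1-s_{\min }\right) }\cos \theta \right) w_{0}^{2}$, so Jensen's inequality for the convex function $t\mapsto t^{(p+1)/2}$ (the same computation as in the proof of Lemma \ref{g4}) gives $\frac{1}{2\pi }\int_{0}^{2\pi }\left( 1+2\sqrt{s_{\min }\left( 1-s_{\min }\right) }\cos \theta \right) ^{(p+1)/2}d\theta >1$, hence
\[
\frac{1}{2\pi }\int_{\mathbb{R}^{3}}\int_{0}^{2\pi }\left\vert u+e^{i\theta }v\right\vert ^{p+1}d\theta \,dx>\int_{\mathbb{R}^{3}}w_{0}^{p+1}\,dx.
\]
Substituting these into the formula for $J$ yields $J\left( u,v\right) <\frac{1}{2}\left\Vert w_{0}\right\Vert _{H^{1}}^{2}+\frac{\mu _{11}}{4}\int_{\mathbb{R}^{3}}\phi _{w_{0}}w_{0}^{2}\,dx-\frac{1}{p+1}\int_{\mathbb{R}^{3}}w_{0}^{p+1}\,dx=I_{\mu _{11}}\left( w_{0}\right) =\beta _{r,\mu _{11}}^{(2)}$, which is negative by Theorem \ref{T5} (available since $0<\mu _{11}<\Lambda _{0}$); this establishes $(i)$.

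For $(ii)$, suppose first $v_{0}=0$. From $F\left( u,0\right) =\left\langle I_{\mu _{11}}^{\prime }\left( u\right) ,u\right\rangle $ and $h_{\left( u,0\right) }^{\prime \prime }\left( 1\right) =f_{u}^{\prime \prime }\left( 1\right) $ (as in the proof of Lemma \ref{l5}), membership $\left( u_{0},0\right) \in \mathbf{M}^{+}$ forces $u_{0}\in \mathbf{N}_{\mu _{11}}^{+}$; moreover $u_{0}\neq 0$ because $\left( 0,0\right) \notin \mathbf{M}$, and $u_{0}\in H_{r}^{1}\left( \mathbb{R}^{3}\right) $, so $u_{0}\in \mathbf{N}_{\mu _{11}}^{+}\cap H_{r}^{1}\left( \mathbb{R}^{3}\right) $ and
\[
J\left( u_{0},0\right) =I_{\mu _{11}}\left( u_{0}\right) \geq \inf_{u\in \mathbf{N}_{\mu _{11}}^{+}\cap H_{r}^{1}\left( \mathbb{R}^{3}\right) }I_{\mu _{11}}\left( u\right) =\beta _{r,\mu _{11}}^{(2)}.
\]
If instead $u_{0}=0$, the same reasoning yields $v_{0}\in \mathbf{N}_{\mu _{22}}^{+}\cap H_{r}^{1}\left( \mathbb{R}^{3}\right) $ and $J\left( 0,v_{0}\right) =I_{\mu _{22}}\left( v_{0}\right) \geq \beta _{r,\mu _{22}}^{(2)}$; since $I_{\mu _{11}}\leq I_{\mu _{22}}$ pointwise (because $\mu _{11}\leq \mu _{22}$ and $\phi _{w}\geq 0$) we have $\beta _{r,\mu _{22}}^{(2)}\geq \beta _{r,\mu _{11}}^{(2)}$, so $(ii)$ follows with $\beta _{r,\mu }^{(2)}:=\beta _{r,\mu _{11}}^{(2)}$.

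I expect the only point requiring care to be the scaling bookkeeping in part $(i)$, together with verifying that each of the two strict inequalities used there (the one from Lemma \ref{L2-1} and the Jensen one, either of which already forces strictness) is genuinely strict; part $(ii)$ is a direct reduction to the scalar equation $\left( SP_{\mu _{11}}\right) $ and poses no real obstacle.
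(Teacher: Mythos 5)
Your proposal is correct and follows essentially the same route as the paper: for $(i)$ the same test pair $\left( \sqrt{s_{\min }}w_{r,\mu _{11}}^{(2)},\sqrt{1-s_{\min }}w_{r,\mu _{11}}^{(2)}\right)$ with the bound $g(s_{\min })<\mu _{11}$ from Lemma \ref{L2-1} and the strict Jensen inequality, and for $(ii)$ the reduction of a semi-trivial critical point to $\mathbf{N}_{\mu }^{+}\cap H_{r}^{1}(\mathbb{R}^{3})$ as in Lemma \ref{l5}. Your explicit treatment of the case $u_{0}=0$ via $I_{\mu _{11}}\leq I_{\mu _{22}}$ just spells out the paper's ``without loss of generality'' step.
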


\begin{proof}
$\left( i\right) $ Since%
\begin{eqnarray*}
&&J\left( \sqrt{s_{\min }}w_{r,\mu _{11}}^{\left( 2\right) },\sqrt{1-s_{\min
}}w_{r,\mu _{11}}^{\left( 2\right) }\right) \\
&=&\frac{1}{2}\left\Vert w_{r,\mu _{11}}^{\left( 2\right) }\right\Vert
_{H^{1}}^{2}+\frac{\mu _{11}\mu _{22}-\mu _{12}^{2}}{4\left( \mu _{11}+\mu
_{22}+2\mu _{12}\right) }\int_{\mathbb{R}^{3}}\phi _{_{w_{r,\mu
_{11}}^{\left( 2\right) }}}\left\vert w_{r,\mu _{11}}^{\left( 2\right)
}\right\vert ^{2}dx \\
&&-\frac{1}{2\pi \left( p+1\right) }\int_{\mathbb{R}^{3}}\int_{0}^{2\pi
}\left( 1+2\sqrt{s_{\min }\left( 1-s_{\min }\right) }\cos \theta \right)
^{\left( p+1\right) /2}\left\vert w_{r,\mu _{11}}^{\left( 2\right)
}\right\vert ^{p+1}d\theta dx,
\end{eqnarray*}%
and%
\begin{eqnarray*}
\frac{1}{2\pi }\int_{0}^{2\pi }\left( 1+2\sqrt{s_{\min }\left( 1-s_{\min
}\right) }\cos \theta \right) ^{\left( p+1\right) /2}d\theta &>&\left( \frac{%
1}{2\pi }\int_{0}^{2\pi }1+2\sqrt{s_{\min }\left( 1-s_{\min }\right) }\cos
\theta d\theta \right) ^{\left( p+1\right) /2} \\
&=&1,
\end{eqnarray*}%
we have%
\begin{eqnarray*}
&&J\left( \sqrt{s_{\min }}w_{r,\mu _{11}}^{\left( 2\right) },\sqrt{1-s_{\min
}}w_{r,\mu _{11}}^{\left( 2\right) }\right) \\
&<&\frac{1}{2}\left\Vert w_{r,\mu _{11}}^{\left( 2\right) }\right\Vert
_{H^{1}}^{2}+\frac{\mu _{11}}{4}\int_{\mathbb{R}^{3}}\phi _{_{w_{r,\mu
_{11}}^{\left( 2\right) }}}\left\vert w_{r,\mu _{11}}^{\left( 2\right)
}\right\vert ^{2}dx-\frac{1}{p+1}\int_{\mathbb{R}^{3}}\left\vert w_{r,\mu
_{11}}^{\left( 2\right) }\right\vert ^{p+1}d\theta dx \\
&=&I_{\mu }\left( w_{r,\mu _{11}}^{\left( 2\right) }\right) =\beta _{r,\mu
_{11}}^{\left( 2\right) }.
\end{eqnarray*}%
$\left( ii\right) $ Without loss of generality, we may assume that $v_{0}=0.$
Then
\begin{equation*}
J\left( u_{0},0\right) =I_{\mu _{11}}\left( u_{0}\right) =\frac{1}{2}%
\left\Vert u_{0}\right\Vert _{H^{1}}^{2}+\frac{\mu _{11}}{4}\int_{\mathbb{R}%
^{3}}\phi _{u_{0}}u_{0}^{2}dx-\frac{1}{p+1}\int_{\mathbb{R}^{3}}\left\vert
u_{0}\right\vert ^{p+1}dx
\end{equation*}%
and%
\begin{equation*}
h_{\left( u,0\right) }^{\prime \prime }\left( 1\right) =f_{u}^{\prime \prime
}\left( 1\right) =-2\left\Vert u_{0}\right\Vert _{H^{1}}^{2}+\left(
3-p\right) \int_{\mathbb{R}^{3}}\left\vert u_{0}\right\vert ^{p+1}dx>0,
\end{equation*}%
implying that $u_{0}\in \mathbf{N}_{\mu _{11}}^{+}\cap H_{r}^{1}\left(
\mathbb{R}^{3}\right) .$ Thus $J\left( u_{0},0\right) =$ $I_{\mu _{11}}\left(
u_{0}\right) \geq \beta _{r,\mu }^{\left( 2\right) }.$ This completes the
proof.
\end{proof}

\bigskip

\textbf{We are now ready to prove Theorem \ref{t3}. }By Proposition \ref{p6}
$\left( i\right) $ and Lemma \ref{L6-2}, we can apply the Ekeland
variational principle \cite{E} and Palais criticality principle \cite{P} to
obtain that there exists a sequence $\{\left( u_{n}^{\left( 2\right)
},v_{n}^{\left( 2\right) }\right) \}\subset H_{r}\diagdown \left\{ \left(
0,0\right) \right\} $ such that
\begin{equation*}
J(u_{n}^{\left( 2\right) },v_{n}^{\left( 2\right) })=\inf_{\left( u,v\right)
\in H_{r}}J\left( u,v\right) +o(1)\text{ and }J(u_{n}^{\left( 2\right)
},v_{n}^{\left( 2\right) })=o(1)\text{ in }H^{-1}
\end{equation*}%
and%
\begin{equation*}
\inf_{\left( u,v\right) \in H_{r}}J\left( u,v\right) <\beta _{r,\mu
_{11}}^{\left( 2\right) }<0,
\end{equation*}%
for $i=1,2.$ Then by Proposition \ref{p6} $\left( ii\right) ,$ there exists a vectorial solution $%
\left( u_{0}^{\left( 2\right) },v_{0}^{\left( 2\right) }\right) \in
H_{r}\diagdown \left\{ \left( 0,0\right) \right\} $ such that
\begin{eqnarray*}
\left( u_{n}^{\left( 2\right) },v_{n}^{\left( 2\right) }\right) &\rightarrow &\left(
u_{0}^{\left( 2\right)},v_{0}^{\left( 2\right)}\right) \text{ strongly in }%
H_{r}; \\
J(u_{0}^{\left( 2\right) },v_{0}^{\left( 2\right) }) &=&\inf_{\left(
u,v\right) \in H_{r}}J\left( u,v\right) .
\end{eqnarray*}%
Since
\begin{equation*}
J(\left\vert u_{0}^{\left( 2\right) }\right\vert ,\left\vert v_{0}^{\left(
2\right) }\right\vert )=J(u_{0}^{\left( 2\right) },v_{0}^{\left( 2\right)
})=\inf_{\left( u,v\right) \in H_{r}}J\left( u,v\right) \text{ for }i=1,2,
\end{equation*}%
we may assume that $\left( u_{0}^{\left( 2\right) },v_{0}^{\left( 2\right)
}\right) $ are positive critical points of $J$ on $H_{r}$. Moreover, by
Lemma \ref{L6-2} and $\inf_{\left( u,v\right) \in H_{r}}J\left( u,v\right)
<\beta _{r,\mu _{11}}^{\left( 2\right) }<0$ we have $u_{0}^{\left( 2\right)
}\neq 0$ and $v_{0}^{\left( 2\right) }\neq 0.$ Combining this result with
Theorem \ref{t1}, we conclude that System $\left( E\right) $ has two
positive solutions $\left( u_{0}^{\left( 1\right) },v_{0}^{\left( 1\right)
}\right) $ and $\left( u_{0}^{\left( 2\right) },v_{0}^{\left( 2\right)
}\right) $ such that
\begin{equation*}
J(u_{0}^{\left( 2\right) },v_{0}^{\left( 2\right) })<0<\frac{p-1}{4\left(
p+1\right) }C_{\mu _{12}}^{2}<\alpha ^{-}=J(u_{0}^{\left( 1\right)
},v_{0}^{\left( 1\right) }).
\end{equation*}%
This completes the proof.

\section*{Acknowledgments}

T.F. Wu was supported by the National Science and Technology Council, Taiwan
(Grant No. 112-2115-M-390-001-MY3).

\end{document}